\definecolor{marin}{rgb}   {0.,   0.3,   0.7} 
\definecolor{rouge}{rgb}   {0.8,   0.,   0.} 
\definecolor{sepia}{rgb}   {0.8,   0.5,   0.} 
\theoremstyle{plain} 
\newtheorem{theorem}{Theorem}[section]
\newtheorem{lemma}[theorem]{Lemma}
\newtheorem{proposition}[theorem]{Proposition}
\newtheorem{definition}[theorem]{Definition} \theoremstyle{remark}
\newtheorem{remark}[theorem]{Remark}
\newcommand{\R}{  \mathbb{R}   }
\newcommand{\eps}{\varepsilon}
\newcommand{\C}{  \mathbb{C}   }
\newcommand{\Z}{  \mathbb{Z}   }
\newcommand{\N}{  \mathbb{N}   }
\newcommand{\Hc}{  \mathcal{H}   }
\renewcommand{\P}{  \mathcal{P}   }
\newcommand{\T}{  \mathbb{T}   }
\newcommand{\Sc}{  \mathcal{S}   }
\newcommand{\Uc}{  \mathcal{U}  }
\newcommand{\dd}{  \text{d}   }
\newcommand{\ov}{  \overline  }
\newcommand{\la}{  \lambda  }
\renewcommand{\l}{  \ell  }
\renewcommand{\phi}{  \varphi  }
\newcommand{\dist}{\operatorname{dist}}
\newcommand{\ad}{\operatorname{ad}}
\newcommand{\be}{\begin{equation}}
\newcommand{\ee}{\end{equation}}
\newcommand{\ben}{\begin{equation*}}
\newcommand{\een}{\end{equation*}}
\newcommand{\Norm}[2]{\|#1\|\left.\vphantom{T_{j_0}^0}\!\!\right._{#2}}         
\newcommand{\SNorm}[2]{|#1|\left.\vphantom{T_{j_0}^0}\!\!\right._{#2}}     
\numberwithin{equation}{section}
\def\norma#1{\left\| #1\right\|}
\author{Dario Bambusi }
\address{Dipartimento di Matematica, Universit\`a degli studi di Milano\\
Via Saldini 50, I-20133 Milano}
\email{dario.bambusi@unimi.it}
\author{Erwan Faou}
\address{INRIA \& ENS Cachan Bretagne  \\
Avenue Robert Schumann F-35170 Bruz, France. } 
\email{Erwan.Faou@inria.fr}
 \author{ Beno\^it Gr\'ebert}
\address{Laboratoire de Math\'ematiques Jean Leray\\
2, rue de la Houssini\`ere \\
44322 Nantes Cedex 03, France}
\email{benoit.grebert@univ-nantes.fr}
\title[Numerical Solitons for DNLS]
{Existence and stability of solitons for fully discrete approximations of the nonlinear Schr\"odinger equation.}
\begin{document}

\begin{abstract}
In this paper we study the long time behavior of a discrete approximation in time and space of the cubic nonlinear Schr\"odinger equation on the real line. More precisely, we
consider a symplectic time splitting integrator applied to a discrete nonlinear Schr\"odinger equation with additional Dirichlet
boundary conditions on a large interval. 
We give conditions ensuring the existence of a numerical soliton which is close in energy norm to the continuous soliton. Such result is valid under a  CFL condition of the form $\tau h^{-2}\leq C$ where $\tau$ and $h$ denote
the time and space step size respectively. Furthermore we prove that if the  initial
datum is symmetric and close  to the continuous soliton $\eta$ then the associated numerical
solution remains close to the orbit of $\eta$,
$\Gamma=\cup_\alpha\{e^{i\alpha}\eta\}$, for very long
times.   
\end{abstract}

\subjclass{ 37M15, 65P40, 37K40 }
\keywords{Discrete nonlinear Schr\"odinger equation, Numerical soliton, Stability, Backward error analysis, Modified Hamiltonian}
\thanks{
}

\maketitle
%\tableofcontents

\section{Introduction}
We study numerical approximations of solitons of the focusing nonlinear Schr\"odinger equation (NLS) on the real line:
\be\label{nls}
i\psi_t=-\psi_{xx}-|\psi|^2\psi, \quad x\in\R,\ t\in\R.
\ee
This equation is a Hamiltonian partial differential equation (PDE) associated with the Hamiltonian function 
\begin{equation}
\label{hc}
H(\psi):=\int_\R \left[\left|\psi_x\right|^2-\frac{\left|\psi\right|^4}{2} \right]
\dd x, 
\end{equation}
and preserving the $L^2$ norm
\begin{equation}
\label{nc}
N(\psi):=\int_{\R} \left|\psi\right|^2 \dd x. 
\end{equation}
The goal of this paper is to understand the long time behavior of numerical
integration algorithms for initial data close to the solitary wave solution
$\psi(t,x)=e^{i\la t}\eta(x)$ where 
\begin{equation}
\label{phis}
\eta(x):=\frac{1}{\sqrt
2}{\rm sech}\left( \frac{x}{2}\right), 
\end{equation} 
and $\lambda \in \R$ is the Lagrange multiplier associated with the
minimization of $H$ under the constraint $N=1$. It is well known, see
for instance \cite{Weinstein85,Grill87,Grill90,Frohlich04} that this
solution is {\em orbitaly stable} in the sense that for a small
pertubation of the initial data, the exact solution remains close to
the orbit of $\eta$ for all times. Here we will only consider {\em
symmetric} initial conditions satisfying $\psi(x) = \psi(-x)$, a
property that is preserved by the flow of \eqref{nls}.  In this
setting, the orbital stability of the continuous soliton can be
described as follows: Let
\begin{equation}
\label{eq:defgamma}
\Gamma := \bigcup_{\alpha \in \R} \{Êe^{i\alpha} \eta(x)\}Ê
\end{equation}
and assume that $\psi(0,\cdot)$ is a symmetric function satisfying $\dist(\psi(0,\cdot),\Gamma) \leq \delta$ for some $\delta$ sufficiently small, then for all times $t > 0$, if $\psi(t,\cdot)$ denotes the solution of \eqref{nls}, we have 
\begin{equation}
\label{eq:bite}
\forall\, t > 0, \quad 
\dist(\psi(t,\cdot), \Gamma) < C \delta, 
\end{equation}
where $C$ is a constant independent of $\delta$ and $t$, and where the
distance is measure in $H^1$ norm.  The present paper deals with the
persistence of this result by fully discrete numerical methods. It is
an old problem that was pointed out in several papers in the last 30
years, see for instance \cite{DFP81,SZ86,Duran00,Borgna08}, and
the numerical approximation of \eqref{phis} over long times has now
become a classical benchmark to test the performance and stability of
numerical schemes, see for instance \cite{Akrivis,Fei95,Besse} and the
references therein. However, as far as we know, no result of the
form \eqref{eq:bite} has been proven in the literature for fully
discrete approximations of \eqref{nls} (see
however \cite{BP10,Borgna08} for the space discretized case).

In particular, the effect of the time discretization yields many mathematical difficulties. Dur\'an \& Sanz-Serna gave in \cite{Duran00} some asymptotic expansion of the numerical solution close to a soliton, but the lack of a modified energy acting on $H^1$ and preserved over long time by the numerical scheme (the so called {\em backward error analysis}) was an obstruction to define a possibly stable numerical soliton. Here, we take advantage of a recent construction of such a modified energy given by Faou \& Gr\'ebert in \cite{FG11} to show the existence and stability of a modified soliton that is close to \eqref{phis} in energy norm. %Recall that in contrast to the finite dimensional case, the construction of such a modified energy (the so called {\em backward error analysis}, see \cite{BG94,HLW,Reich99}) cannot be done in general unless a Courant-Friedrich-Lewy (CFL) condition is used, see \cite{FG11,F11}. 

In this paper, the discretization of \eqref{nls} we consider are made of three levels of approximations: 

\begin{itemize}
\item\textbf{A space discretization}, where we use a grid with mesh
size $h > 0$ made of an infinite collection of equidistant points of
$\R$. The equation \eqref{nls} is then approximated by the discrete
nonlinear Schr\"odinger equation (DNLS) where the Laplace operator is
replaced by its finite difference approximation over three points.  

\item \textbf{A Dirichlet cut-off}, where we replace the integrability condition at infinity of the derivative of $\psi$ by a Dirichlet boundary condition at the boundary of a large window of size $2Kh$ where $K >> 1$. 
\item \textbf{A time discretization algorithm} to integrate the  DNLS equation with Dirichlet boundary condition. This  discretization introduces a last parameter $\tau$ which represents the time step.  To do this we consider 
a symplectic time splitting algorithm where the kinetic part and
potential part are solved alternatively as described for instance
in \cite{Weideman86}. %As we will see below this latter scheme
%reproduce the orbital stability of the soliton provided a CFL
%condition is used.
\end{itemize}
Each of these three levels of discretization relies on discretization parameters. In this paper, we prove orbital stability in the sense of \eqref{eq:bite} for the numerical solution, where the distance to $\Gamma$ is estimated in terms of the three discretization parameters $h$, $K$ and $\tau$. 

We first present some numerical experiments showing that the solitary
wave rapidly disappears if either the algorithm of integration is not symplectic, or if it is symplectic, but used with  a too large CFL number  $\tau h^{-2}$.

The proof is organized as follows: we first recall in Section 4 the main arguments of the proof of the orbital stability result in the continuous case, following in essence the presentation made in \cite{Frohlich04}. We then give in Section 5 an abstract result showing that if the energy space $H^1$ is well approximated by the space discretization, and if the numerical scheme preserves - or almost preserves - modified $L^2$ norm and energy functions that are close to the exact ones, we can obtain orbital stability results with precise bounds depending on the parameters. We then apply this formalism in Section 6 to our three levels of discretization. 

As the proof of orbital stability result is based on the variational characterization of the solitary wave and thus heavily relies on the preservation of the energy and $L^2$ norm, long time bounds can be straightforwardly obtained for energy and $L^2$ norm preserving schemes such as the Dufour-Fortin-Payre scheme, see \cite{DFP81}. This follows directly from the analysis of the space discretized case (see also Remark \ref{BFG}).
% for which we follow the lines of \cite{BP10}.  

The cornerstone of the analysis of splitting method is the construction of the modified energy. Recall that in the finite dimensional case, the existence of modified energy is guaranteed by Hamiltonian interpolation: see \cite{BG94,HLW,Reich99} but cannot be applied straightforwardly to Hamiltonian PDEs unless unreasonable a priori assumptions are made on the regularity of the numerical solution, which prevents a fair use of the bootstrap argument underlying the orbital stability methodology. 
Here we take advantage of the recent backward error analysis result of \cite{FG11} to construct a modified energy {\em acting on $H^1$} for splitting methods applied to \eqref{nls}. Actually we give a simplified proof of a simpler
version of the result presented in \cite{FG11} or \cite{F11}, which has some interest in itself.

Using this result, we then prove an orbital stability result for fully discrete splitting method applied to \eqref{nls} with a CFL restriction, and over very long times of the form  $n\tau\sim \tau^{-M}$, where $M \geq 0$ is an integer number depending on the CFL. 

\section{Three discretization levels and main results}\label{theo}

We now describe more precisely the three levels of approximation of \eqref{nls} mentioned in the introduction. At each step, we state the orbital result that we obtain. 

%%%%%%%%%%%%%
\subsection{Space discretization}Ê
%%%%%%%%%%%%%
Having fixed a positive parameter $h$ we discretize space by
substituting the sequence
$\psi_{\l}\simeq\psi(h \l)$, $\l\in\Z$ for the function $\psi(x)$,  and the second order operator of finite difference
$\Delta_h$ defined by
\begin{equation}
\label{deltamu}
(\Delta_h \psi)_{\l}:=\frac{\psi_{\l+1}+\psi_{\l-1}-2\psi_{\l}}{h^2}, 
\end{equation}
for the Laplace operator $-\partial_{xx}$. 
The NLS is thus reduced to the discrete nonlinear Schr\"odinger
equation (DNLS):
\begin{equation}
\label{dnls} i\dot
\psi_\l =-\frac{1}{h^2}(\psi_{\l+1}+\psi_{\l-1}-2\psi_\l)-
|\psi_\l|^2\psi_\l , \quad \ell \in\Z \ .
\end{equation}
where $t \mapsto \psi(t) = (\psi_\ell(t))_{\ell \in \Z}$ is an application from $\R$ to $\C^\Z$.  With this equation is associated a Hamiltonian function and a discrete $L^2$ norm given by 
\begin{equation}
\label{eq:discrham}
H_h(\psi)=h\sum_{j\in \Z}\left[\left|\frac{\psi_j
-\psi_{j-1}}{h}\right|^2
-\frac{|\psi_j|^4} {2}\right]
\quad \mbox{and}\quad
N_h(\psi)=h\sum_{j\in \Z} |\psi_j|^2. 
\end{equation}
The discrete space of functions is 
$$
V_h = \{ \psi_j \in \C^{\Z}\, | \, \psi_j = \psi_{-j}\}
$$
equipped with the discrete norm 
$$
\Norm{\psi}{h} = 2h \sum_{j\in\Z} \frac{|\psi_{j+1}-\psi_j |^2}{h^2}+h \sum_{j\in\Z} |\psi_j|^2. 
$$
Following \cite{BP10}, we identify $V_h$ with a finite element subspace of
$H^1(\R;\C)$. More precisely,  defining the function $s:\R \to \R$ by 
\begin{equation}
\label{fe.1}
s(x)=
\begin{cases}
0\qquad\qquad  &{\rm if}\quad |x|>1,\\
x+1\quad  &{\rm if}\quad   -1\leq x\leq 0,\\
- x+1\quad  &{\rm if}\quad  0\leq x\leq 1,
\end{cases}
\end{equation}
 the identification is done through the map $i_h: V_h \to H^1(\R;\C)$ defined by 
\begin{equation}
\label{eq:ih}
\left\{\psi_j\right\}_{j\in\Z}\mapsto (i_h\psi)(x) := \sum_{j\in\Z} \psi_j \left(\frac{x}{h}-j\right) \ .
\end{equation}
Recall that $\Gamma$ is the curve of minima of the continuous Hamiltonian and is given by \eqref{eq:defgamma}.
With these notations, we have the following result
\begin{theorem}
\label{th:dnls}
There exist $\delta_0$ and $h_0$ such that  for all $\delta < \delta_0$ and $h \leq h_0$, if $(\psi^0)_{j \in \Z} \in V_h$ is such that
$$
\dist(i_h \psi^0,\Gamma) \leq \delta,
$$
where the distance is measured in the continuous $H^1(\R;\C)$ norm, then the solution $(\psi_j(t))_{j \in \Z}$ of \eqref{dnls}  satisfies
$$
\forall\, t \geq 0, \quad 
\dist(i_h \psi(t),\Gamma) \leq C (\delta + h)
$$
for some constant $C$ independent of $h$ and $\delta$. 
\end{theorem}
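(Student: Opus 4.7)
The plan is to mimic the continuous orbital stability argument (recalled in Section 4 below) at the discrete level, using the exact preservation of the discrete Hamiltonian $H_h$ and mass $N_h$ along the flow of \eqref{dnls}. The equation \eqref{dnls} also preserves the symmetric subspace $V_h$ (by direct inspection on the three-point stencil), so $i_h\psi(t)$ remains a symmetric function of $H^1(\R;\C)$, which is the setting in which the continuous variational coercivity applies. The discrete-to-continuous defect between the two sets of invariants will be paid for by an $O(h^2)$ correction, which is what produces the bound $C(\delta+h)$ after taking a square root in the coercivity inequality.

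First I would establish two approximation estimates: for every $\psi \in V_h$ with $\Norm{\psi}{h}$ bounded,
\begin{equation*}
\big|N(i_h\psi) - N_h(\psi)\big| \;\leq\; Ch^2\,\Norm{\psi}{h}^{\,2}, \qquad
\big|H(i_h\psi) - H_h(\psi)\big| \;\leq\; Ch^2\,P\!\left(\Norm{\psi}{h}\right),
\end{equation*}
together with a norm equivalence $\Norm{\psi}{h}^{\,2} \sim \|i_h\psi\|_{H^1}^{2}$, with $P$ a fixed polynomial. The kinetic part of $H$ matches exactly, since $i_h\psi$ has slope $(\psi_{j+1}-\psi_j)/h$ on each cell; the $N$ defect equals $-\frac{h^2}{6}\|(i_h\psi)'\|_{L^2}^2$ by a direct computation; the $L^4$ defect on each cell is the trapezoidal quadrature error for a smooth integrand (note that for the full grid $h\sum_j |\psi_j|^4 = h\sum_j \tfrac12(|\psi_j|^4+|\psi_{j+1}|^4)$, so the discrete sum \emph{is} the trapezoidal rule), which yields an $O(h^2)$ bound after summation via the one-dimensional Sobolev embedding $H^1 \hookrightarrow L^\infty$.

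Second, I would introduce the Lyapunov functions $L := H - \lambda N$ and $L_h := H_h - \lambda N_h$. Since $\eta$ is a critical point of $L$ on the symmetric subspace, a Taylor expansion combined with the phase invariance of $L$ gives $|L(i_h\psi^0) - L(\eta)| \leq C\delta^{2}$ from the assumption $\dist(i_h\psi^0,\Gamma) \leq \delta$; the approximation estimate then yields $|L_h(\psi^0) - L(\eta)| \leq C(\delta^2 + h^2)$. The DNLS flow conserves both $H_h$ and $N_h$, hence $L_h$, so $L_h(\psi(t)) = L_h(\psi^0)$ for all $t\geq 0$; transporting back through the approximation produces $|L(i_h\psi(t)) - L(\eta)| \leq C(\delta^2 + h^2)$. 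The coercivity result of Section 4 then reads
\begin{equation*}
L(u) - L(\eta) \;\geq\; c\,\dist(u,\Gamma)^2 - C\,\dist(u,\Gamma)^3
\end{equation*}
for any symmetric $u \in H^1(\R;\C)$ close enough to $\Gamma$, and applied to $u = i_h\psi(t)$ this gives the announced $\dist(i_h\psi(t),\Gamma) \leq C(\delta + h)$.

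The main obstacle is that the approximation estimates and the coercivity are only uniform as long as $\Norm{\psi(t)}{h}$ sits in a fixed ball, equivalently as long as $\dist(i_h\psi(t),\Gamma)$ stays below a threshold $\eps_0$ that depends only on $\eta$; and this is precisely what is being proved. The issue is handled by the standard continuity argument: on the maximal interval where $\dist(i_h\psi(t),\Gamma) < \eps_0$ the chain of estimates above already yields the improved bound $\dist(i_h\psi(t),\Gamma) \leq C(\delta+h) \leq \eps_0/2$ as soon as $\delta_0$ and $h_0$ are chosen small enough (depending only on $\eta$ through the coercivity constant), which closes the bootstrap and extends the estimate to all $t \geq 0$.
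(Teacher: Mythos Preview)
Your overall strategy --- carry the exactly conserved discrete invariants $H_h,N_h$ through $O(h^2)$ approximation estimates to their continuous counterparts, then invoke the continuous coercivity near $\Gamma$ and close by bootstrap --- is a legitimate and more elementary alternative to what the paper does. The paper instead verifies the abstract hypotheses \textbf{(i)}--\textbf{(iv)} of Section~5 (with $\epsilon(h)=h$, citing \cite{BP10}), builds a discrete coordinate system and a discrete minimizer $\eta_h$ of $H_h$ on $\{N_h=1\}$, and then applies Theorem~\ref{th:stab} with $\beta(h)=0$. Your quadrature computations for the $N$-defect and the $L^4$-defect are correct, and in fact sharper ($O(h^2)$) than the $O(h)$ the paper uses.

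There is, however, a genuine gap in your coercivity step. The functional $L=H+\lambda N$ (note the sign: Remark~\ref{lag} gives $H'(\eta)+\lambda N'(\eta)=0$) does \emph{not} satisfy
\[
L(u)-L(\eta)\;\geq\;c\,\dist(u,\Gamma)^2-C\,\dist(u,\Gamma)^3
\]
on the symmetric space. The Hessian $L''(\eta)$ has a negative direction along $\eta$ itself: $\langle L_+\eta,\eta\rangle=-2\|\eta\|_{L^4}^4<0$, so $u=(1+s)\eta$ yields $L(u)-L(\eta)<0$ while $\dist(u,\Gamma)=|s|\,\|\eta\|_{H^1}>0$. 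What Section~4 actually proves is coercivity of $H$ \emph{restricted to} $\{N=1\}$ (equivalently, of $\mathcal H$ on $W=\{\eta,i\eta\}^\perp$); the bad direction is killed precisely by the mass constraint, not by passing to $L$.

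The repair is immediate and is what the paper's framework encodes in the hypothesis $|N_\mu(\psi_\mu)-1|\leq\delta$ of \eqref{eq:ctrl}: track $N$ separately. Conservation of $N_h$ plus your estimate $|N\circ i_h-N_h|=O(h^2)$ give $|N(i_h\psi(t))-1|\leq C(\delta+h^2)$ on the bootstrap interval; rescale $i_h\psi(t)$ onto $\{N=1\}$ at $H^1$-cost $O(\delta+h^2)$, apply the constrained coercivity to the rescaled function, and bound $H$ of the rescaled function via conservation of $H_h$ exactly as you did for $L_h$. The continuity argument then closes as you wrote.
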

Notice that the DNLS flow is not defined globally everywhere, i.e. for all initial data in $V_h$ and all times $t$. However since a solution of DNLS issued from an initial datum close to $\Gamma$ remains unconditionally close to $\Gamma$,   such solution is automatically global.

\subsection{Dirichlet cut-off}
In order to come down to a finite dimensional system we fix a large number
$K \geq 1$, substitute the sequence $-K,...,K$ for the set $\Z$ in \eqref{dnls}, 
and add Dirichlet boundary conditions $\psi_{-K-1}=\psi_{K+1}=0$. The equation we consider is thus the (large) ordinary differential system 
\be 
\label{dnlsdir}
\left\{\begin{array}{rcl}
i\dot \psi_\l &=&-\displaystyle\frac{1}{\mu^2}(\psi_{\l+1}+\psi_{\l-1}-2\psi_\l)- |\psi_\l|^2\psi_\l ,\quad  -K\leq \l\leq K\\[2ex]
\psi_{\pm (K+1)} &=& 0. 
\end{array}\right.
\ee
Note that here, we use the convention that $\psi_\ell = 0$ for all
$|\ell| \geq K+1$, so that the previous system is indeed a closed set of
differential equations.  The corresponding discrete function space is
\begin{equation}
\label{eq:VhK}
V_{h,K} := \{ (\psi_j)_{j \in \Z} \in V_h \, | \, \psi_j =
0\quad \mbox{for} \quad |j| \geq K+1\},
\end{equation}
on which we can define the Hamiltonian function and discrete $L^2$ norm $H_{h,K} := H_h|_{V_{h,K}}$ and $N_{h,K}:= N_{h}|_{V_{h,K}}$ as restrictions of the functions \eqref{eq:discrham} to $V_{h,K} \subset V_h$. Similarly, we define $i_{h,K} = i_h|_{V_{h,K}}$. In the following, we often use the notation $(\psi_j)_{j =-K}^K$ to denote an element of $V_{h,K}$ with the implicit extension by $0$ for $|j| \geq K+1$ to define an element of \eqref{eq:VhK}. 
With these notations, we have the following result: 

\begin{theorem}
\label{th:dnlsdir}
There exist constants $C_1$, $C_2$, $\delta_0$ and $\epsilon_0$ such that for all $\delta < \delta_0$ and all $h$ and $K$ such that $ h + \frac{1}{h^2} e^{-C_1 Kh} \leq \epsilon_0$,  if $(\psi^0_j)_{j= -K}^K \in V_{h,K}$ is such that 
$$
\dist(i_{h,K} \psi^0,\Gamma) \leq \delta,
$$
then the solution $(\psi_j(t))_{j = -K}^K$ of \eqref{dnlsdir} satisfies
$$
\forall\, t \geq 0, \quad 
\dist(i_{h,K} \psi(t),\Gamma) \leq C_2 \Big(\delta + h + \frac{1}{h^2} e^{-C_1 Kh}\Big). 
$$
\end{theorem}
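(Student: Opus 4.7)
The plan is to deduce Theorem~\ref{th:dnlsdir} from the abstract orbital stability framework of Section~5. Since the Dirichlet system \eqref{dnlsdir} is itself a finite-dimensional Hamiltonian ODE on the symmetric subspace of $V_{h,K}$ with $U(1)$-invariant conserved Hamiltonian $H_{h,K}$ and mass $N_{h,K}$, the ``approximate preservation'' hypothesis of the abstract theorem holds \emph{exactly} along the flow. The work therefore reduces to verifying two approximation inputs: first, that elements of $i_{h,K}(V_{h,K})$ approximate well the soliton $\eta$ and the tangent direction $i\eta$ to $\Gamma$ in the continuous $H^1$ norm; second, that the discrete functionals $H_{h,K}$ and $N_{h,K}$ differ from their continuous counterparts $H$ and $N$ by a controlled error on this neighborhood.

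For the first point, I would introduce a natural ``truncated discrete soliton'' $\eta_{h,K}\in V_{h,K}$ defined by sampling, $(\eta_{h,K})_j=\eta(jh)$ for $|j|\leq K$ and zero otherwise, followed by a normalization ensuring $N_{h,K}(\eta_{h,K})=1$. The identification \eqref{eq:ih} through piecewise-linear interpolation then produces an $H^1$-approximation error with two contributions: an $\O(h)$ term from the standard finite element estimate already used in Theorem~\ref{th:dnls}, and a boundary truncation term coming from the exponential decay $\eta(x)\sim e^{-|x|/2}$. The latter yields the constant $C_1$ in $e^{-C_1 Kh}$ (any $C_1<1/2$), while the prefactor $1/h^2$ appears naturally when one measures the tail contribution using the discrete second-difference operator $\Delta_h$, whose weight $1/h^2$ is amplified at the boundary where $\psi_{K+1}$ is forced to vanish but the sampled $\eta(Kh)\sim e^{-Kh/2}$ is not.

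For the second point, comparing $H$ and $N$ evaluated on the finite-element image $i_{h,K}\psi$ with $H_{h,K}(\psi)$ and $N_{h,K}(\psi)$ gives a discrepancy that is $\O(h)$ in the interior (Riemann-sum and interpolation estimates) and whose residual boundary contribution is again controlled by $h^{-2}e^{-C_1Kh}$. Together with the first point, this transfers the continuous variational characterization of $\eta$ as a constrained minimizer of $H$ on $\{N=1\}$ to a quasi-variational characterization at the discrete level: coercivity of the Lyapunov functional $H_{h,K}+\lambda(N_{h,K}-1)$ on the orthogonal complement of the orbit's tangent direction is preserved, up to an additive error of size $h+h^{-2}e^{-C_1Kh}$. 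The exact conservation of $H_{h,K}$ and $N_{h,K}$ along \eqref{dnlsdir} then closes the bootstrap of Section~5 and delivers the claimed $H^1$-distance bound for all $t\geq 0$, the smallness condition $h+h^{-2}e^{-C_1Kh}\leq\epsilon_0$ being precisely what is needed for the bootstrap to remain valid.

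The main obstacle will be bookkeeping the $h^{-2}$ factor in a sharp and consistent way: the $h^{-2}$ weight of the discrete Laplacian magnifies boundary residuals, so one must verify both in the $H^1$-approximation of the soliton and in the comparison between $H_{h,K}$ and $H\circ i_{h,K}$ that the only polynomial-in-$h^{-1}$ loss is exactly this $h^{-2}$ against an exponentially small $e^{-C_1 Kh}$. Once this balance is established with constants independent of $h$ and $K$, the rest of the argument is an essentially mechanical application of the abstract machinery developed for Theorem~\ref{th:dnls}.
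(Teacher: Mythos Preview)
Your approach is essentially identical to the paper's: verify hypotheses \textbf{(i)}--\textbf{(iv)} of the abstract framework with $\epsilon(\mu)=h+h^{-2}e^{-\nu Kh}$ and then apply Theorem~\ref{th:stab} with $\beta(\mu)=0$, since the Dirichlet flow exactly conserves $H_{h,K}$ and $N_{h,K}$. One small clarification: because $V_{h,K}\subset V_h$ and the functionals and embedding are mere restrictions, the comparison of $H_{h,K}$, $N_{h,K}$ with $H\circ i_{h,K}$, $N\circ i_{h,K}$ is purely the $\mathcal{O}(h)$ estimate already established for DNLS and carries \emph{no} boundary term---the exponential contribution enters only through hypothesis \textbf{(iv)}, namely the estimate of $\Norm{i_{h,K}\pi_{h,K}\eta-\eta}{H^1}$, and the $h^{-2}$ prefactor there arises from the discrete-gradient part of the norm $\Norm{\cdot}{h}$ applied to the tail $\pi_{h,K}\eta-\pi_h\eta$, not from $\Delta_h$ acting at the boundary.
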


\begin{remark}
The exponentially small term in the previous estimate represents the effect of the Dirichlet cut-off. As we will see below, it  directly comes from the fact that the function $\eta$ is exponentially decreasing at infinity. 
\end{remark}

\subsection{Time discretization}
In this work the time discretization of \eqref{nls} that we consider is a splitting scheme: we construct $\psi^{n}$ the approximation of the solution $\psi(t)$ of \eqref{nls}  at time $n\tau$ iteratively by the formula  
$$
\psi^{n+1} =\Phi_A^\tau \circ \Phi_P^\tau(\psi^{n} ), 
$$ 
where the flow $\Phi_P^\tau$ is by definition the exact solution of 
$$
i \dot \psi_\ell = - |\psi_\ell|^2 \psi_\ell, \quad \ell = -K,\ldots,K \, , 
$$
in $V_{h,K}$ 
which is given explicitly by formula $\Phi_P^\tau( \psi)_\ell = \exp( i \tau |\psi_\ell|^2) \psi_\ell$. The flow $\Phi_A^\tau$, is by definition the solution of 
\begin{equation}
\label{eq:phiA}
i \dot \psi_\ell = -\frac{1}{h^2}(\psi_{\l+1}+\psi_{\l-1}-2\psi_\l), \quad \ell = -K,\ldots,KÊ\, , 
\end{equation}
with the convention $\psi_{\ell} = 0$ for $|\ell | \geq K+1$. The
implementation of this numerical scheme requires the computation of an
exponential of a tridiagonal matrix at each step. It could also be
done in discrete Fourier space in which the operator on right-hand side is diagonal. The
main advantage of this splitting method is that it is an explicit and
symplectic scheme.

Our main result is the following
\begin{theorem}
\label{th:splitting}
There exist constants $C_1$, $C_2$, $\delta_0$ and $\epsilon_0$ such that for all $\delta < \delta_0$ and all $h$, $\tau$ and $K$ such that $ h +  \frac{1}{h^2} e^{-C_1 Kh} \leq \epsilon_0$  and the following CFL condition is satisfied
\begin{equation}
\label{eq:CFL}
 (2 M + 3) \frac{\tau}{h^2} < \frac{2\pi}{3},
\end{equation}
then 
if $(\psi^0_j)_{j= -K}^K \in V_{h,K}$ is such that 
$$
\dist(i_{h,K} \psi^0,\Gamma) \leq \delta, 
$$
we have 
\begin{equation}
\label{eq:estSPLIT}
\forall\, n\tau  \leq \tau^{-M}, \quad 
\dist(i_{h,K} (\Phi^\tau_A \circ \Phi_P^\tau)^n \psi^0,\Gamma) \leq C_2 \Big(\delta + h + \frac{\tau}{h} + \frac{1}{h^2} e^{-C_1 Kh}\Big). 
\end{equation}
\end{theorem}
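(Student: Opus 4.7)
The plan is to reduce the problem to the framework of Section~5 by producing, for the splitting map $\Psi^\tau := \Phi_A^\tau \circ \Phi_P^\tau$, an almost-conserved energy acting on the $H^1$-like space $V_{h,K}$ and close to $H_{h,K}$. The discrete $L^2$ norm $N_{h,K}$ is \emph{exactly} preserved by $\Psi^\tau$: $\Phi_P^\tau$ acts componentwise by a phase, and $\Phi_A^\tau$ is the flow of a quadratic Hamiltonian that is diagonal in discrete Fourier variables, hence unitary. Once a suitable modified Hamiltonian $\tilde H_\tau$ is constructed, the abstract orbital-stability machinery used for Theorem~\ref{th:dnlsdir} applies to the iterates $\psi^n = (\Psi^\tau)^n \psi^0$, with an additional error of order $\tau/h$ reflecting the gap between $\tilde H_\tau$ and $H_{h,K}$.

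\textbf{Modified energy via backward error analysis.} Writing the two flows as $\Phi_A^\tau = \exp(\tau X_A)$ and $\Phi_P^\tau = \exp(\tau X_P)$, where $X_A$ and $X_P$ are the Hamiltonian vector fields associated with the kinetic and potential parts of $H_{h,K}$, a formal Baker--Campbell--Hausdorff expansion yields
$$
\Psi^\tau = \exp\!\big(\tau X_{\tilde H_\tau}\big), \qquad \tilde H_\tau = H_{h,K} + \sum_{k=1}^{M} \tau^{k} H_k,
$$
in which each $H_k$ is built from $k$-fold Poisson brackets of the two blocks of $H_{h,K}$. The non-resonance/CFL assumption \eqref{eq:CFL} is precisely what allows one to truncate this series in a controlled way on $H^1$-balls of $V_{h,K}$: following the simplified FG11 argument announced after Remark~\ref{BFG}, one obtains, uniformly on any bounded set $B \subset V_{h,K}$,
$$
\sup_{\psi \in B} |\tilde H_\tau(\psi) - H_{h,K}(\psi)| \leq C\,\tau/h \qquad \text{and} \qquad \sup_{\psi \in B}|\tilde H_\tau(\Psi^\tau \psi) - \tilde H_\tau(\psi)| \leq C\,\tau^{M+1}.
$$
The scaling $\tau/h$ reflects the fact that each Poisson bracket with the kinetic part costs one discrete derivative, which under the CFL is controlled by $(\tau/h^2)\cdot h = \tau/h$ in the energy norm.

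\textbf{Bootstrap for the fully discrete orbital stability.} With these two estimates and the exact conservation of $N_{h,K}$ at hand, I run the abstract argument of Section~5 exactly as in the proof of Theorem~\ref{th:dnlsdir}, substituting $\tilde H_\tau$ for $H_{h,K}$. Assuming inductively that $i_{h,K}\psi^n$ remains in a fixed $H^1$-neighborhood of $\Gamma$, accumulating the per-step defect over $n \leq \tau^{-M-1}$ steps gives $|\tilde H_\tau(\psi^n) - \tilde H_\tau(\psi^0)| \leq C n \tau^{M+1} \leq C\tau$, and the comparison with $H_{h,K}$ transfers this into a bound of order $\tau/h$ on $H_{h,K}(\psi^n) - H_{h,K}(\psi^0)$. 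The coercivity of the reduced functional around $\Gamma$ under the symmetry constraint, recalled in Section~4 and already transferred to $V_{h,K}$ via $i_{h,K}$ in Theorem~\ref{th:dnlsdir}, then closes the bootstrap and produces \eqref{eq:estSPLIT}; the remaining terms $\delta$, $h$, and $h^{-2}e^{-C_1 Kh}$ are inherited directly from Theorem~\ref{th:dnlsdir}.

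\textbf{Where the difficulty lies.} The main obstacle is neither the bootstrap nor the Hamiltonian interpolation per se, but carrying out this interpolation in the \emph{energy topology}. A direct BCH expansion generates iterated commutators involving the discrete Laplacian $\Delta_h$, and controlling them in some $H^s$ with $s$ large would demand unreasonable a priori bounds on high Sobolev norms of the numerical solution, incompatible with the bootstrap that only furnishes $H^1$ information. The FG11 technology, reproduced here in a simpler form, is what allows one to estimate these commutators using only $H^1$ control, under the sole CFL condition $(2M+3)\tau/h^{2} < 2\pi/3$. This is also what dictates both the shape of the error term $\tau/h$ and the explicit coupling of the CFL to the time exponent $M$ in \eqref{eq:estSPLIT}.
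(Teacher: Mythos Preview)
Your proposal is correct and follows essentially the same route as the paper: build a modified Hamiltonian $\tilde H_\tau$ for the splitting via backward error analysis under the CFL condition~\eqref{eq:CFL}, obtain the two estimates (closeness to $H_{h,K}$ of order $\tau/h$, per-step drift of order $\tau^{M+1}$), and feed these into the abstract stability result of Section~5 with $\epsilon(\mu)=h+h^{-2}e^{-C_1Kh}+\tau/h$ and $\beta(\mu)=\tau^{M+1}$.

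Two small points worth tightening. First, the abstract framework (hypothesis \textbf{(iii)} and the coercivity transfer in~\eqref{eq:approxHcal}) requires closeness of $\tilde H_\tau$ to $H\circ i_\mu$ in the $\mathcal{C}^2$ norm on balls, not just the sup norm you wrote; without $\mathcal{C}^2$ control you cannot transport the strict convexity of $\mathcal{H}$ to $\mathcal{H}_\mu$ and hence cannot run the bootstrap. In the paper this comes for free because the $Z_j$ are polynomials with uniformly bounded $\Norm{\cdot}{\mu}$-norm, so the $\mathcal{C}^0$ bound on balls upgrades to $\mathcal{C}^2$ by Cauchy-type estimates. Second, your bookkeeping ``$n\le \tau^{-M-1}$ gives $n\tau^{M+1}\le C\tau$'' is off by one power of $\tau$; what you actually get is $n\tau^{M+1}\le C$, and the way the paper closes is via the condition $n\beta(\mu)\le \epsilon(\mu)+\delta$ in Theorem~\ref{th:stab}, which indeed matches $n\tau\le \tau^{-M}$ up to constants. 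Neither point affects the strategy, which is the paper's.
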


\begin{remark}
In the last estimate \eqref{eq:estSPLIT}, the term $\tau/h$ represents the error induced by the modified energy constructed with the method of \cite{FG11} (see Section 7 below). Note that under the condition \eqref{eq:CFL}, this term is actually of order $\mathcal{O}(h)$. 
\end{remark}
\begin{remark}\label{BFG}
An alternative  time approximation of \eqref{dnlsdir} is the modified Crank-Nicolson scheme given by Delfour-Fortin-Payre see \cite{DFP81,SZ84} defined as the application $\psi^n \mapsto \psi^{n+1}$ such that 
$$
\psi^{n+1}_\ell = \psi^{n}_\ell + \frac{i\tau}{2}(\Delta_h (\psi^{n+1} + \psi^{n}))_\ell + \frac{i \tau}{4} (|\psi^{n+1}_\ell|^2 + |\psi^{n+1}_\ell|^2) (\psi^{n+1}_\ell + \psi^{n}_\ell), 
$$
for $\ell = -K,\ldots,K$. 
It can be shown using a fixed point argument that for $\tau$ sufficiently small, $\psi^{n+1}$ is well defined, and that this scheme preserves exactly the discrete $L^2$ norm and discrete energy \eqref{eq:discrham}. Using this property, it can easily be shown that the conclusions of Theorem \ref{th:dnlsdir} extends straightforwardly to this specific fully discrete case.  Notice that this method  has the disadvantage to be strongly implicit.   %We refer to \cite{SZ84} for existence and and  convergence results under CFL condition.
\end{remark}

\section{Numerical experiments}\label{num} 

In this section, we would like to illustrate the results given in
 Theorem \ref{th:splitting}, and prove that if the CFL
 condition \eqref{eq:CFL} is not satisfied, the stability
 estimate \eqref{eq:estSPLIT} is no longer true. In contrast, we
 show that if the CFL number is small enough, a numerical stability
 can be indeed observed.  On the other hand, we show that for non
 symplectic integrators, even used with a very small CFL number,
 numerical instabilities appear.

In a first example, we take $h = 0.1875$, $K = 80$ (so that $Kh = 15$),  $\tau = 0.2$ and the initial condition \eqref{phis}. The CFL number is equal to 5.7. We consider the integrator $\Phi_A^\tau \circ \Phi_P^\tau$ defined above. As mentioned in the previous section, the flow of $\Phi_P\tau$ can be calculated explicitely, while the computation of $\Phi_A$ - see \eqref{eq:phiA} - is performed using the  {\tt expm} MATLAB procedure. 

In Figure \ref{fig4}, we plot the absolute value of the fully discrete numerical solution $\psi^{n} = (\Phi_A^\tau \circ \Phi_P^\tau)^n (\psi^0)$. We can observe that the shape of the soliton is destroyed between the times $t = 100$ and $200$. 
%{\tt maybe we should copy this table giving the CFL conditions for cubic NLS}

\begin{figure}[ht]
\begin{center}
   \rotatebox{0}{\resizebox{!}{0.33\linewidth}{%
   \includegraphics{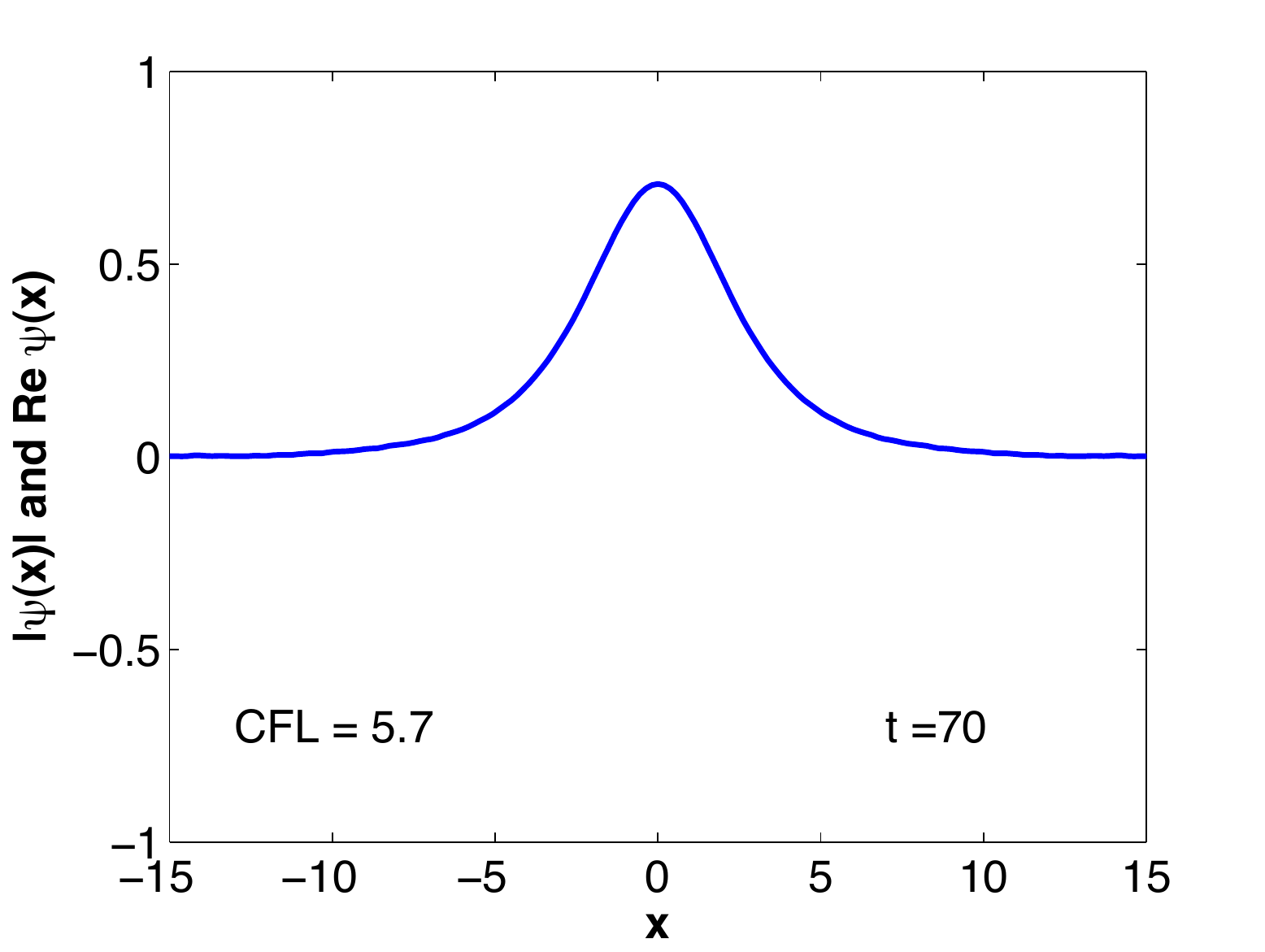}}}
\rotatebox{0}{\resizebox{!}{0.33\linewidth}{%
   \includegraphics{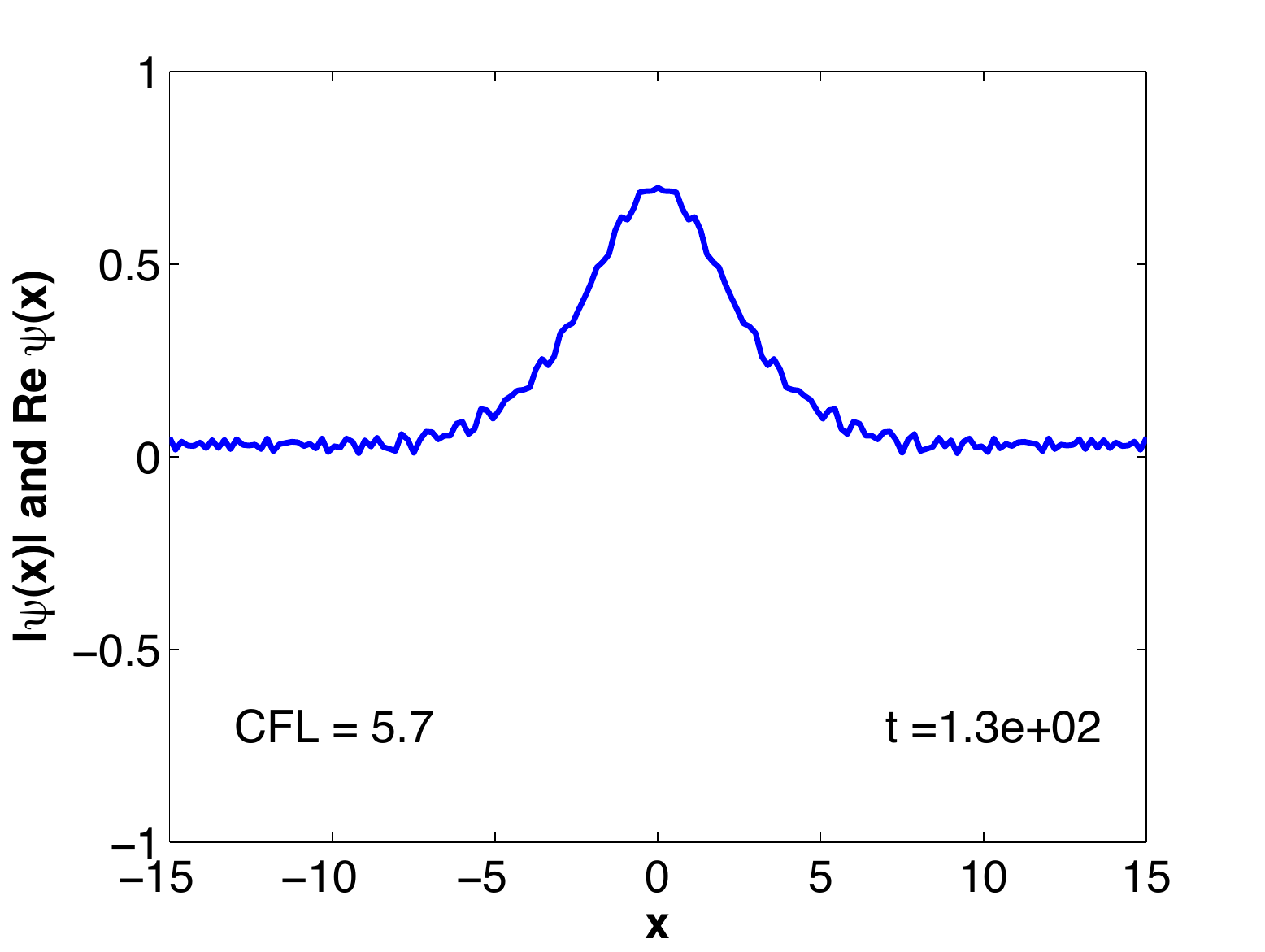}}}
\\
   \rotatebox{0}{\resizebox{!}{0.33\linewidth}{%
   \includegraphics{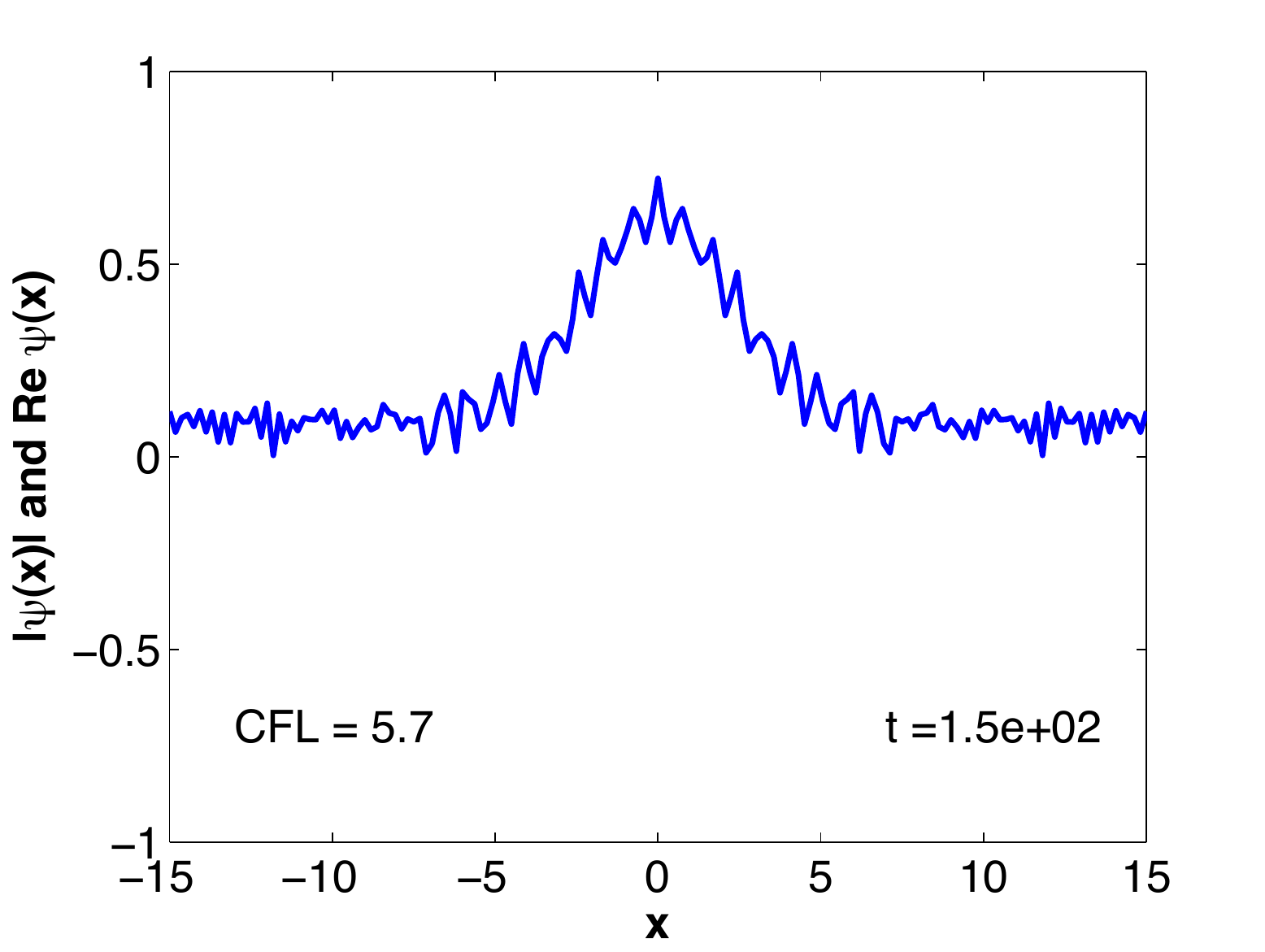}}}
      \rotatebox{0}{\resizebox{!}{0.33\linewidth}{%
   \includegraphics{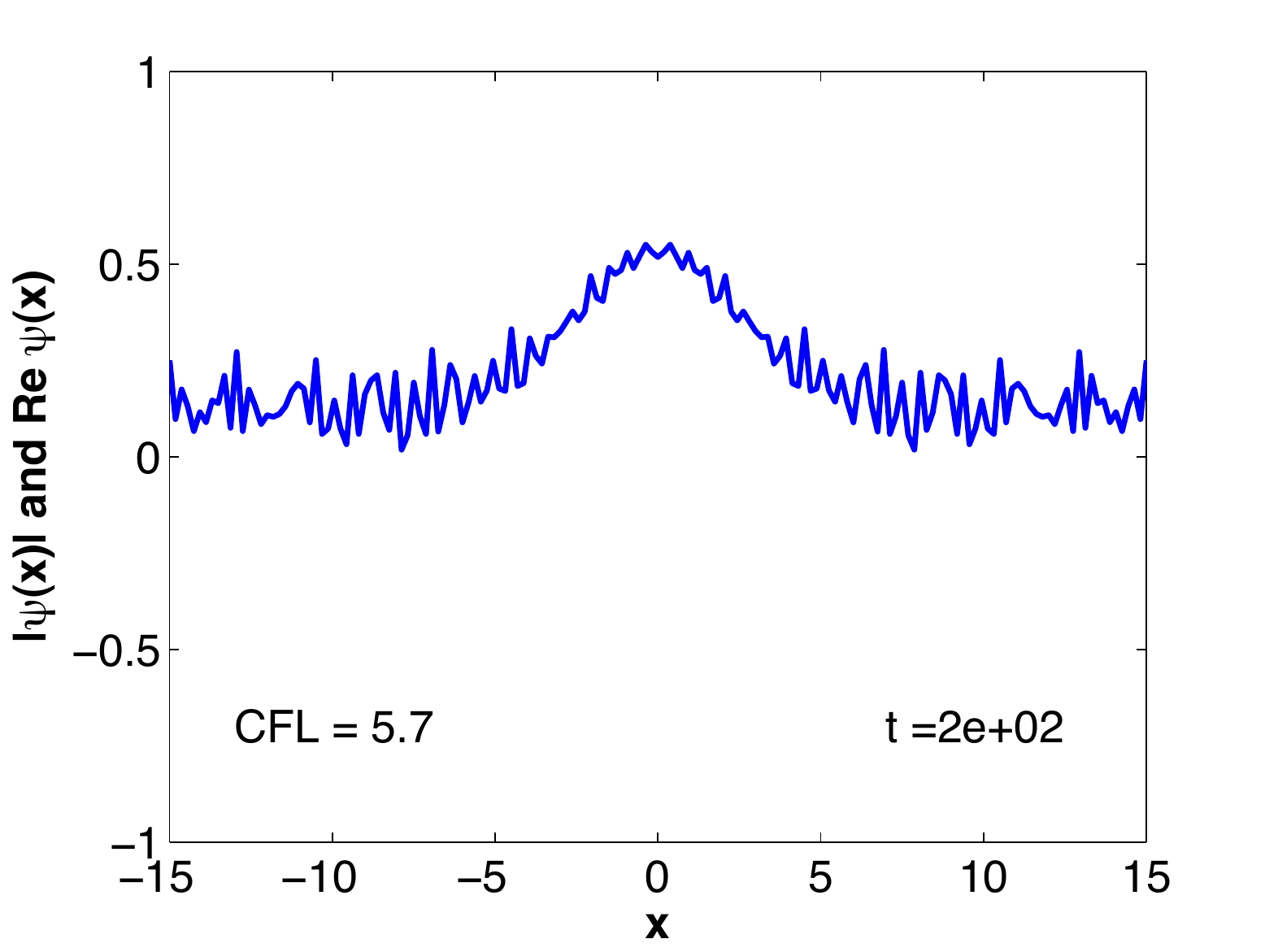}}}
\end{center}
\caption{Instability for $\tau/h^2 = 5.7$}
\label{fig4}
\end{figure}

In a second example, we take the same initial data and parameters $K = 80$ and $h = 0.1875$, except that we take a much smaller $\tau = 0.001$ making the CFL number equal to 0.028. However, we break artificially the symplecticity of the integrator by replacing the exact evaluation of the exponential in the flow $\Phi_A^\tau$ by its Taylor approximation of order 2: 
$$
\exp(\tau A) \simeq I + \tau A + \frac{\tau^2}{2} A^2.  
$$
\begin{figure}[ht]
\begin{center}
\rotatebox{0}{\resizebox{!}{0.33\linewidth}{%
   \includegraphics{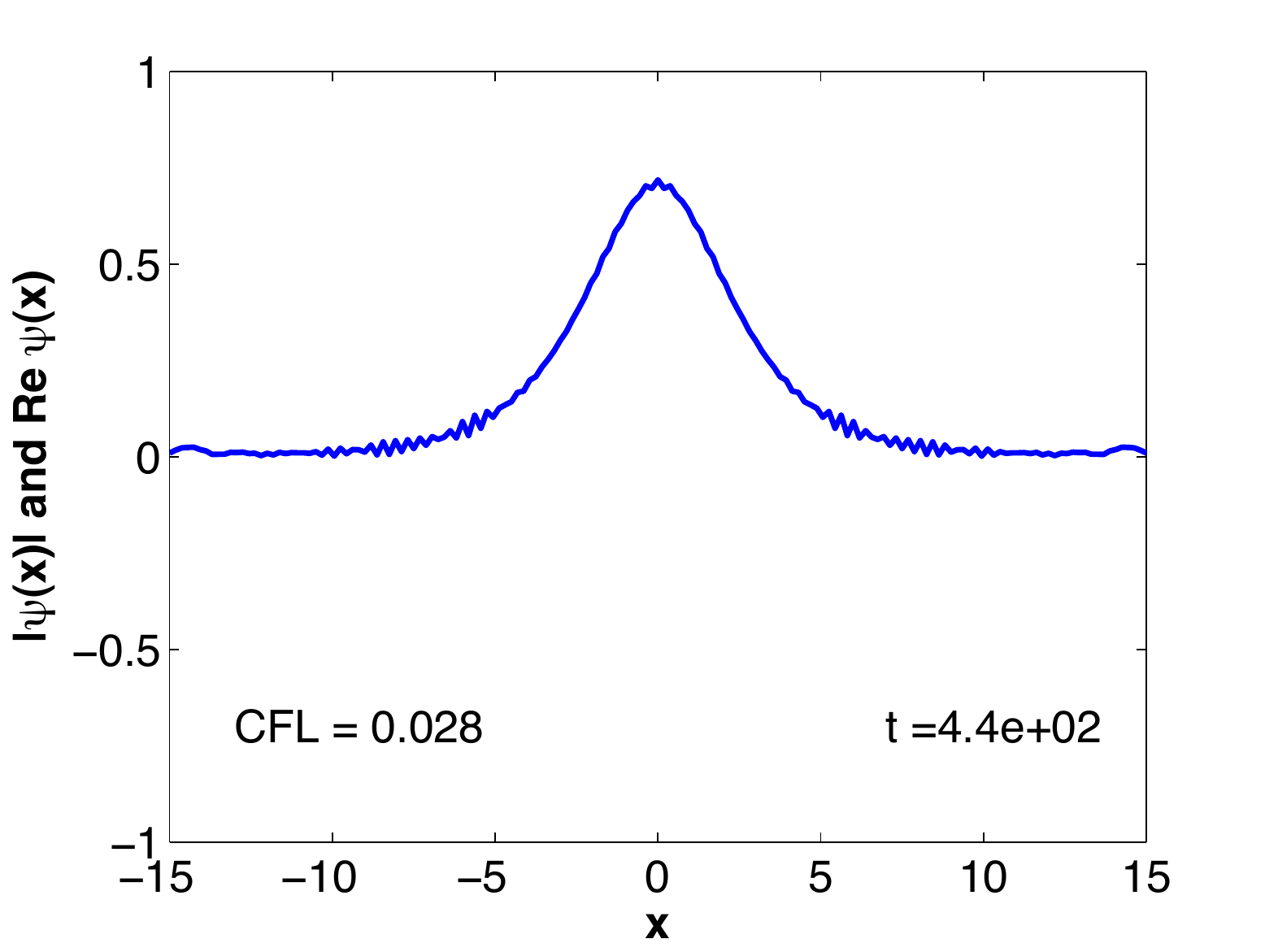}}}
   \rotatebox{0}{\resizebox{!}{0.33\linewidth}{%
   \includegraphics{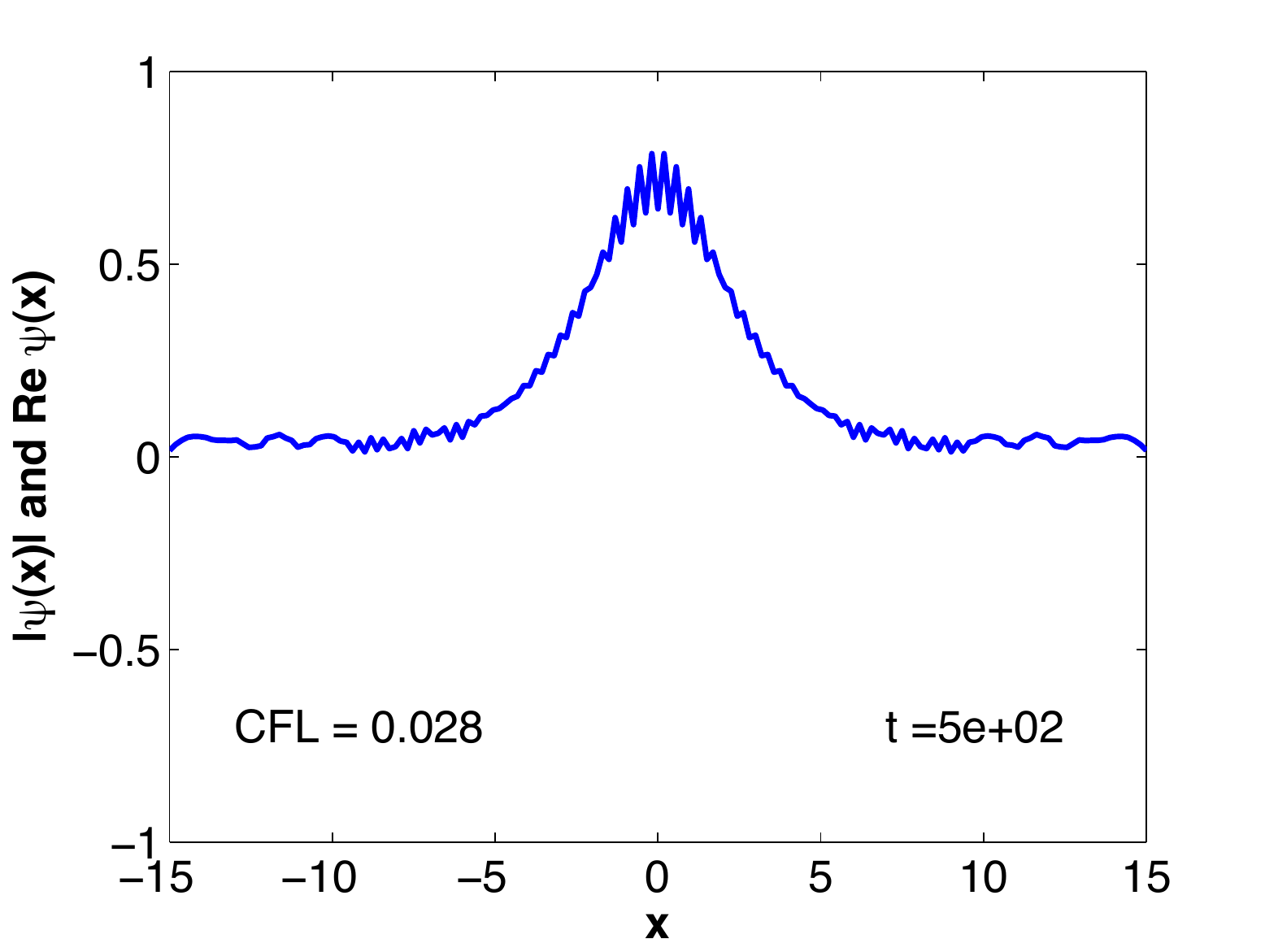}}} \\
\rotatebox{0}{\resizebox{!}{0.33\linewidth}{%
   \includegraphics{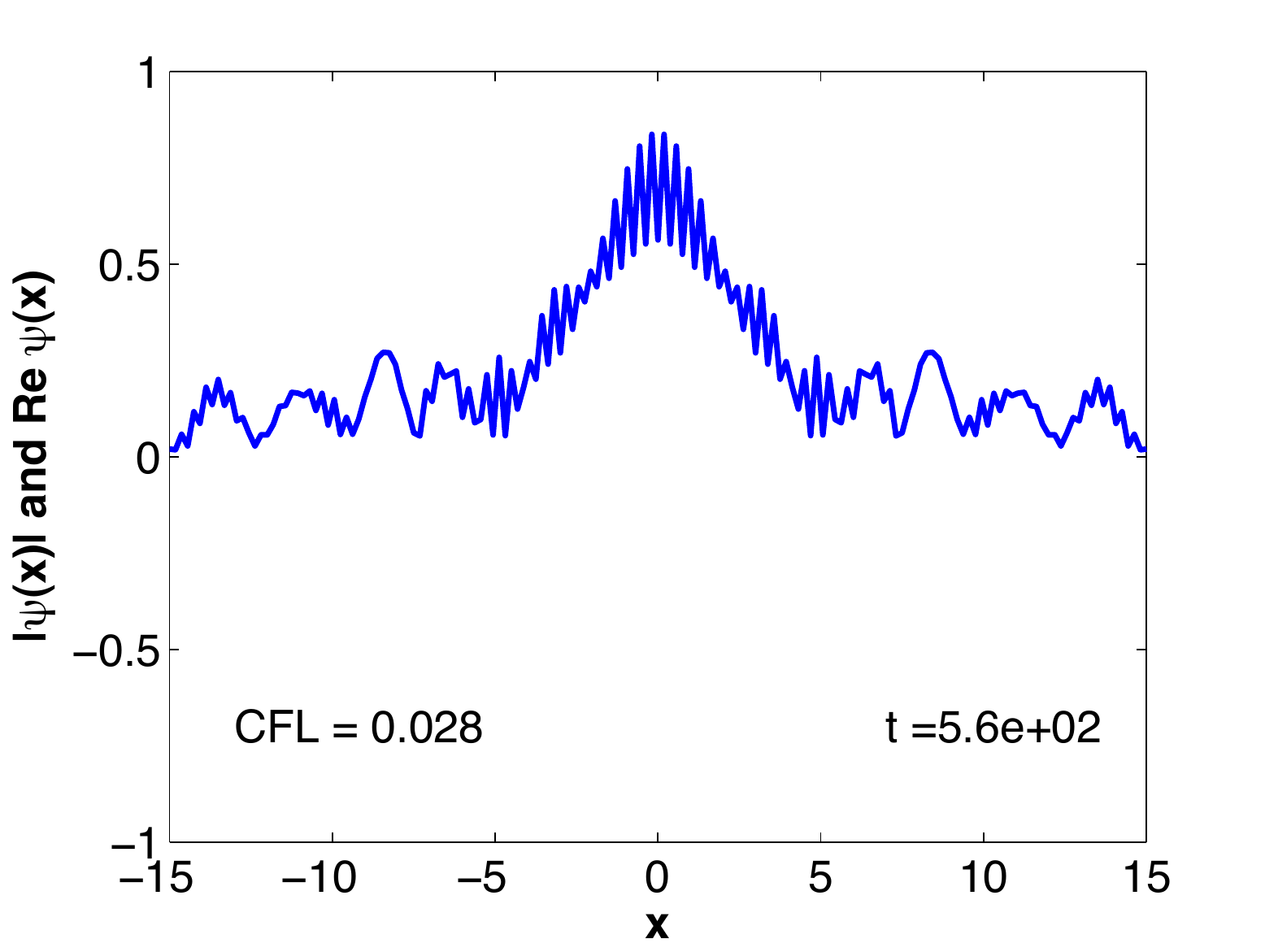}}}
   \rotatebox{0}{\resizebox{!}{0.33\linewidth}{%
   \includegraphics{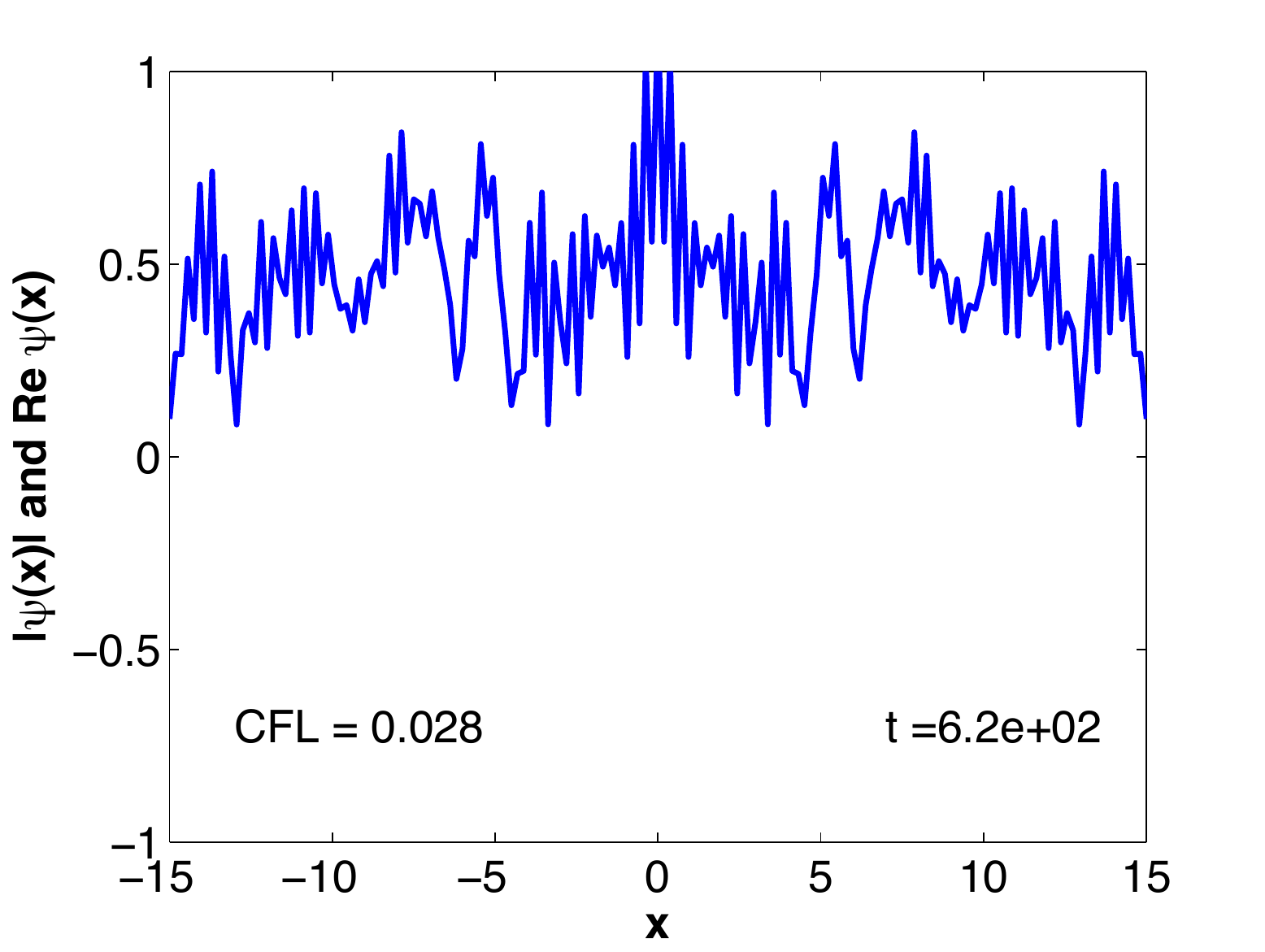}}}
\end{center}
\caption{Instability of non symplectic integrators}
\label{fig10}
\end{figure}
As before, we observe in Figure \ref{fig10} some instability phenomenon after some time, despite the fact that the CFL number is very small. Such an instability is due to the non symplectic nature of the integrator, which prevents the existence of a modified energy preserved by the numerical scheme.

Finally, we consider the same initial condition and numbers $K$ and $h$, but we take $\tau = 0.02$ making the CFL number be equal to 0.57 and we compute the exponential exactly making the scheme symplectic. 
\begin{figure}[ht]
\begin{center}
\rotatebox{0}{\resizebox{!}{0.33\linewidth}{%
   \includegraphics{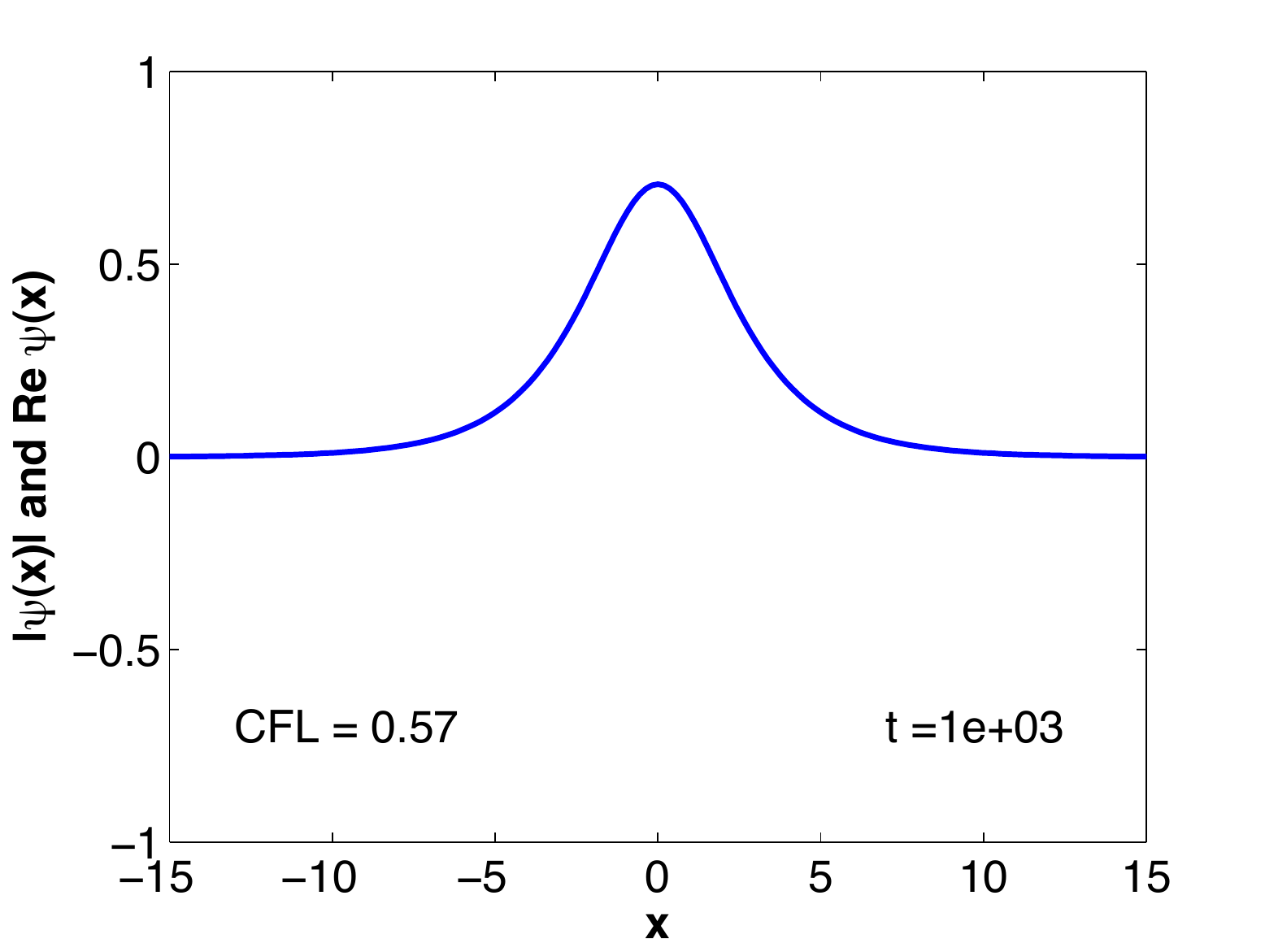}}}
   \rotatebox{0}{\resizebox{!}{0.33\linewidth}{%
   \includegraphics{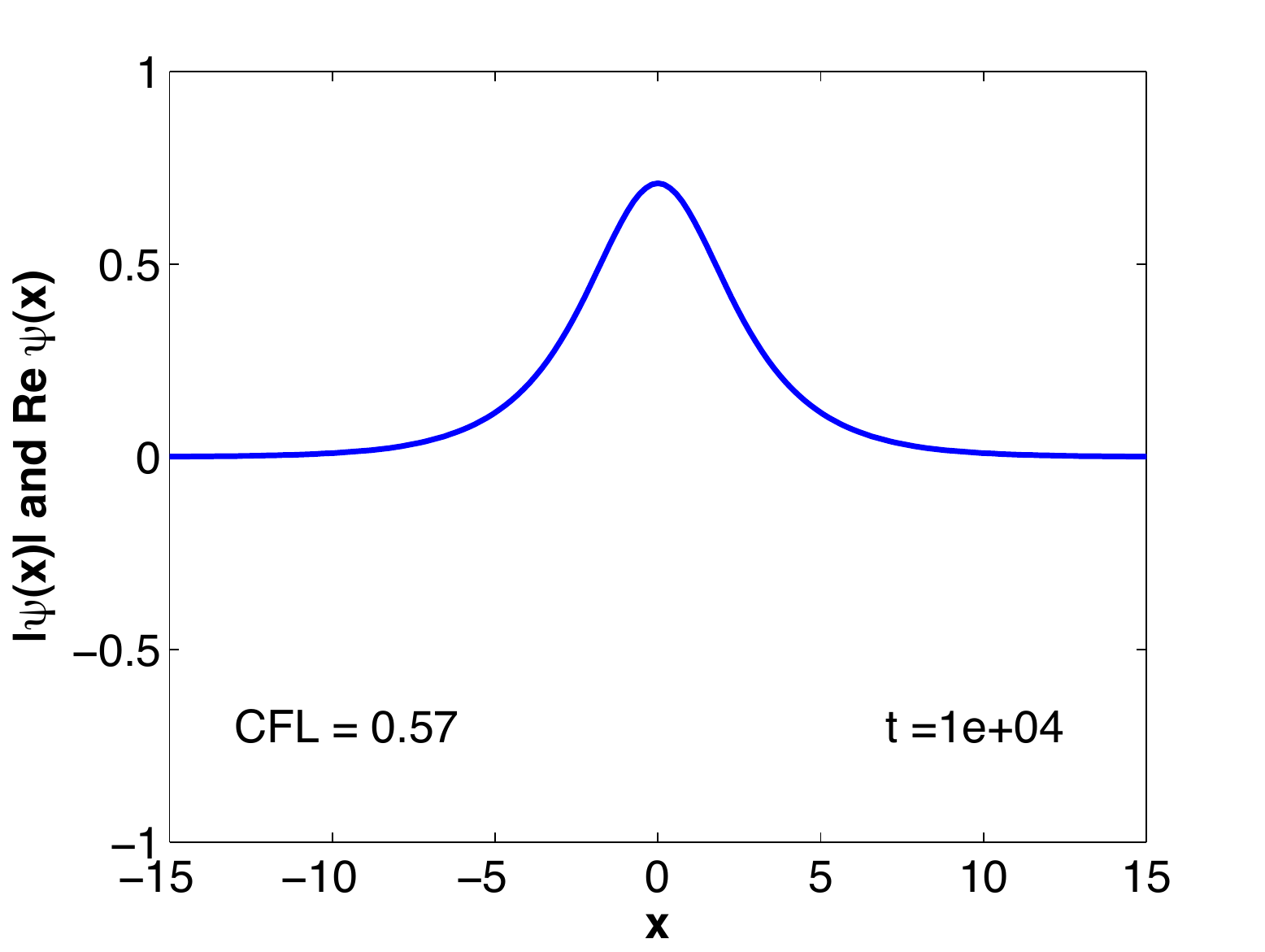}}} \\
\rotatebox{0}{\resizebox{!}{0.33\linewidth}{%
   \includegraphics{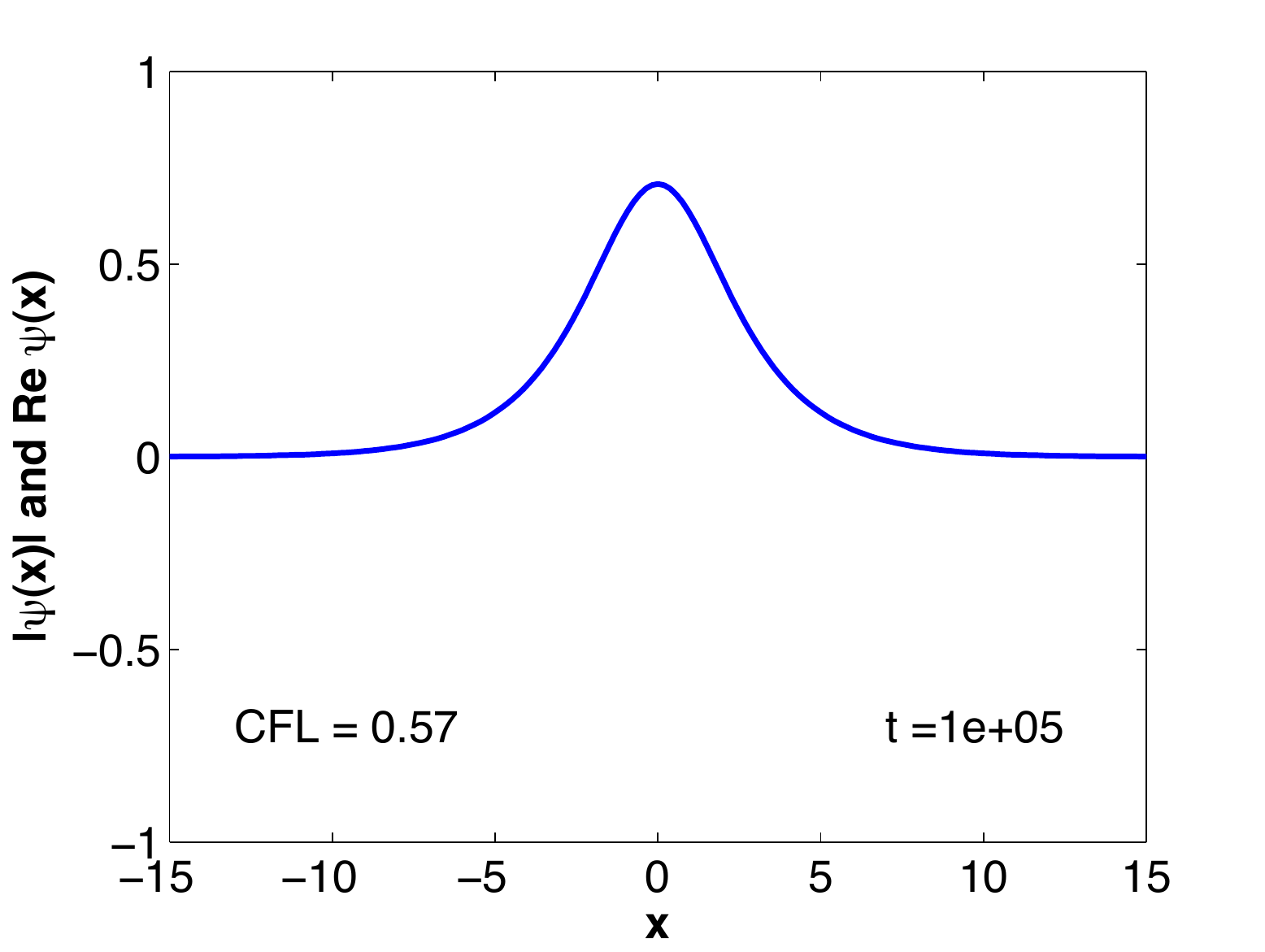}}} 
   \rotatebox{0}{\resizebox{!}{0.33\linewidth}{%
   \includegraphics{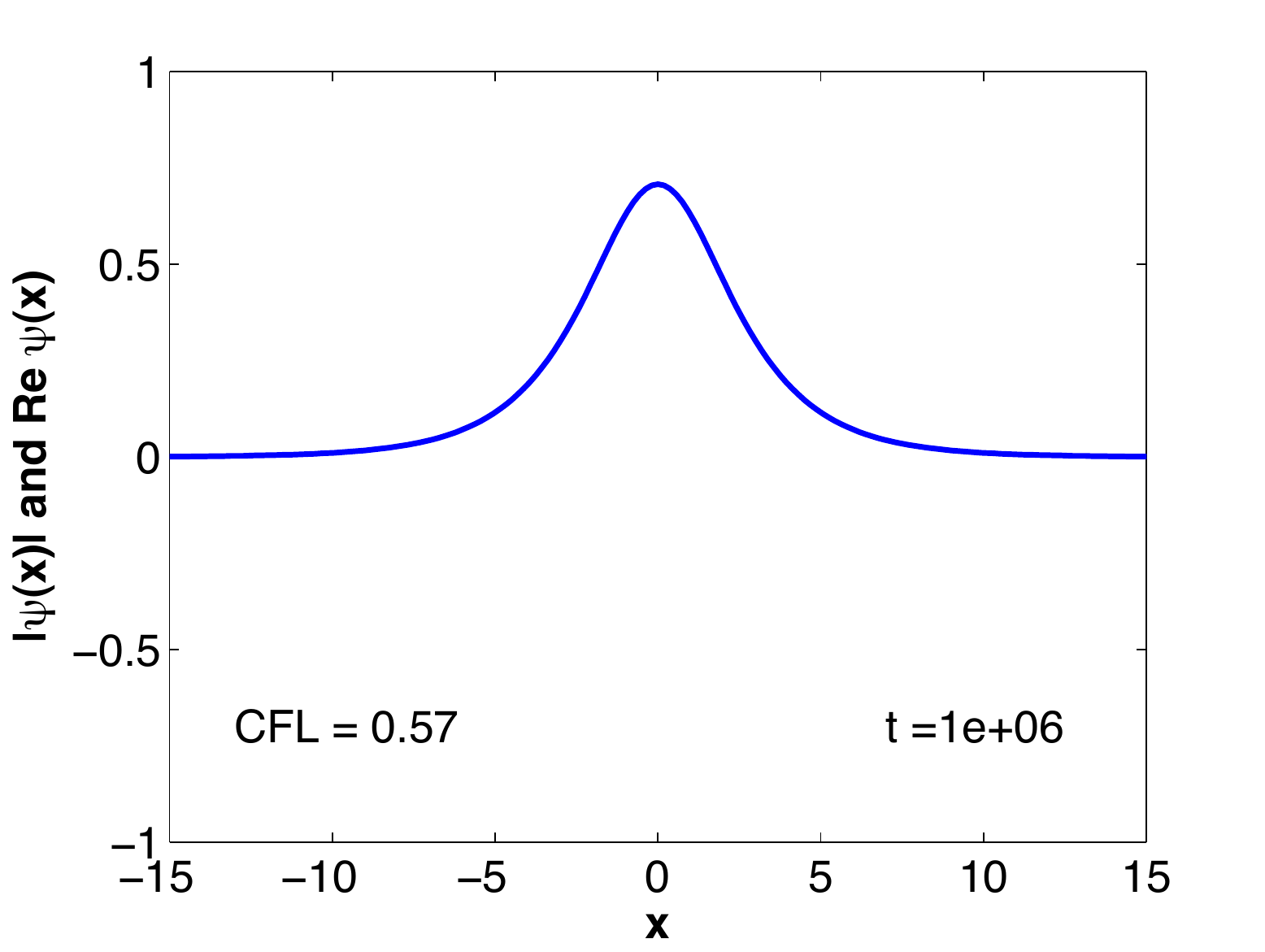}}}
\end{center}
\caption{Long time stability for $\tau/h^2 = 0.57$}
\label{fig6}
\end{figure}
In Figure \eqref{fig6} we can observe that the soliton is preserved for a very long time, up to $t = 10^6$ which corresponds to $2.10^8$ iterations. This result illustrates our  Theorem \ref{th:splitting}. %C%omputations with the modified Crank-Nicolson scheme yield similar picture, illustrating \eqref{eq:estDFP}. 

\section{The continuous case}

Before giving the proofs of the Theorems presented above, we recall
here the main lines of the proof of the orbital stability result in
the continuous and symmetric case obtained first by \cite{Weinstein85}
(see also \cite{Grill87,Grill90, Frohlich04}). The proofs of the
discrete results will be essentially variations on the same theme.
The method is based on the variational characterization of the soliton
$\eta$ as the unique real symmetric minimizer of the problem
\begin{equation}
\label{min}
\min_{N(\psi) = 1} H(\psi)
\end{equation}
where  $H$ is the Hamiltonian \eqref{hc} and $N$ the norm \eqref{nc}. 
\begin{remark}
\label{lag}
By the method of Lagrange multipliers there exists $\lambda>0$ such that
$$
-\partial_{xx}\eta-\eta^3=-\lambda\eta.
$$
\end{remark}

\begin{remark}
We only consider the case where $N(\eta) = 1$ in order to avoid the introduction of a supplementary parameter. It is clear to the reader that we could also consider the numerical approximation of any given soliton, provided that its $L^2$ norm enters into all the constants appearing in the estimates below. 
\end{remark}

In the following, we set
$$
V = \{\; \psi \in H^1(\R;\C) \quad  | \quad \psi(-x) = \psi(x)\; \}. 
$$
We also define the real scalar product
$$
\langle \varphi, \psi \rangle = \mathrm{Re} \int_{\R} \varphi(x) \overline{\psi(x)} \dd x. 
$$
This scalar product allows to identify $H^1(\R;\C)$ with the product $H^1(\R;\R) \times H^1(\R;\R)$ as follows: 
If $\psi = \frac{1}{\sqrt{2}}(q + ip)$ and $\varphi = \frac{1}{\sqrt{2}}(q' + ip')$ where $p$, $q$, $p'$ and  $q'$ are real symmetric $H^1(\R;\R)$ functions, then we have 
$$
\langle \varphi, \psi \rangle = \frac12 \int_{\R} q(x)q'(x) + p(x)p'(x) \dd x. 
$$ 
The real scalar product on $H^1(\R;\C) \simeq H^1(\R;\R) \times H^1(\R;\R)$ is then given by 
$$
(\varphi, \psi)  = \langle \varphi, \psi \rangle + \langle \partial_x \varphi, \partial_x \psi \rangle, 
$$
and we set
$$
\Norm{\varphi}{H^1}^2 := (\varphi,\varphi) = \frac12 \int_{\R} |\partial_x p|^2 + |\partial_x q|^2  + |p|^2 + |q|^2 \dd x
$$
for $\varphi = \frac{q + ip}{\sqrt{2}}$. In the rest of this paper, we often amalgamate the two complex and real notations. 

In the following, we set 
\begin{equation}
\label{eq:UR}
\Uc(R) = \{ \varphi \in V\, | \, \dist( \psi, \Gamma) < R\}, 
\end{equation}
where $\Gamma$ is defined in \eqref{eq:defgamma}, and the distance is measured in $H^1$ norm.  

Note that the Hamiltonian function $H$ and the norm $N$ are smooth in $H^1$ (using the fact that $H^1$ is an algebra). Moreover, these functions are gauge invariant, in the sense that for all $\varphi \in H^1$ and all $\alpha\in \R$, we have $H(e^{i\alpha}\varphi) = H(\varphi)$ and $N(e^{i\alpha}\varphi) = N(\varphi)$. 
Due to this invariance, it is 
immediate to realize that the whole manifold $\Gamma$ is formed by
minima of the minimization problem \eqref{min}. Then it is well known \cite{Weinstein85,Grill87,Grill90,Frohlich04} that these minima are
nondegenenerate in the directions transversal to the orbit $\Gamma$ defined in \eqref{eq:defgamma}, for symmetric functions. 

%\begin{equation}
%\label{s}
%\Sc:=\left\{\psi\in H^1(\R)\, | \, N(\psi)=1  \right\}
%\end{equation}
%and consider the vector $i\eta$, which generates the space tangent to
%$\Gamma$. Let $\M\subset\Sc$ be a codimension 1 submanifold of $\Sc$
%through $\eta$ transversal to $\Gamma$, namely such that
%$$
%T_{\eta}\Sc= \R i\eta^c\oplus T_{\eta}\M
%$$
%then there exists $C>0$ such that
%\begin{equation}
%\label{trans}
%d^2(H^c\big|_{\Sc})(\eta^c)(X,X)\geq C\norma{X}^2_1\ ,\quad \forall
%X\in T_{\eta^c}\M\ .
%\end{equation}
%
%\begin{remark}
%\label{ren.rho}
%By nondegeneracy (but also by direct computation) there exists
%$\rho_0>0$ s.t. for any $|\rho|<\rho_0$ there exists a unique real
%symmetric function $\eta^c_\rho$ which is a nongenerate minimum of
%$H^c\big|_{N^c=1+\rho}$. Furthermore $\eta^c_\rho$ is a smooth
%function of $\rho$.   
%\end{remark}
More precisely, following \cite{Frohlich04}, we define the following
set of coordinates in the vicinity of $\Gamma$: set
\begin{equation}
\label{eq:W}
W = \{Êu \in VÊ\, |Ê\, \langle u ,\eta \rangle = \langle u ,i\eta \rangle =  0\},
\end{equation}
equipped with the $H^1$ norm induced by the space $V$.  As $i \eta$ is
tangent to the curve $\Gamma$ and orthogonal\footnote{Recall that here
$\langle\, \cdot \, , \, \cdot \, \rangle$ is a real scalar product.}
to $\eta$, the previous $W$ can be interpreted as the space orthogonal
to the plane containing the planar curve $\Gamma$. Note that $W$ is invariant under the multiplication by complex
number: for any $z \in \C$, if $u \in W$ then $zu \in W$.

We define the map $\chi$ as follows:  
\begin{equation}
\label{eq:chi}
 \T \times \R \times W \ni (\alpha,r,u) \mapsto \chi(\alpha,r,u) = e^{i\alpha}(  (1+r)\eta + u) \in V, 
\end{equation}
where $\T = \R \slash (2\pi \Z)$ is the one-dimensional torus. 
 
The following Lemma can be found in \cite[Section 5, Proposition 1]{Frohlich04}. In our symmetric situation, we give here an independent proof that will later be easily transfered to the situation of discrete systems: 
\begin{lemma}
\label{lem:1}
There exist  constants $r_0$ and $R$ such that   the application $\chi$ is smooth and bounded with bounded derivatives from $\T \times [-r_0,r_0] \times B(R)$ to $V$, and such for all $\varphi \in \Uc(R)$, there exists $(\alpha,r,u) \in \T \times \R \times W$ such that $\varphi = \chi(\alpha,r,u)$. Moreover, the application $\chi^{-1}$ is smooth with bounded derivatives on $\Uc(R)$, and there exists a constant $C$ such that for all $\psi \in \Uc(R)$, we have 
\begin{equation}
\label{eq:bdmerde}
\Norm{u(\psi)}{H^1} \leq C \dist(\psi, \Gamma). 
\end{equation}
\end{lemma}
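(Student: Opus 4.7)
The plan is to show smoothness of $\chi$ (immediate), then construct the inverse via an implicit function theorem argument, and finally establish the quantitative bound \eqref{eq:bdmerde}, which I expect to be the main obstacle.

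Since $\chi$ is analytic in $\alpha$, polynomial in $r$ and affine in $u$, its smoothness and uniform boundedness with bounded derivatives on $\T \times [-r_0,r_0] \times B(R)$ are evident. For the inverse, I first compute the differential at a base point $(\alpha_0,0,0) \in \Gamma$:
\[
D\chi(\alpha_0,0,0)(\dot\alpha,\dot r,\dot u) = e^{i\alpha_0}\bigl(i\dot\alpha\,\eta + \dot r\,\eta + \dot u\bigr).
\]
Using $N(\eta) = \langle \eta,\eta\rangle = 1$ and $\langle \eta,i\eta\rangle = 0$, the real-linear decomposition $V = \R\eta \oplus \R(i\eta) \oplus W$ is a continuous direct sum and $D\chi(\alpha_0,0,0)$ is a topological isomorphism with explicit inverse. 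The inverse function theorem then yields a smooth local inverse in a neighborhood of $e^{i\alpha_0}\eta$, whose size is uniform in $\alpha_0$ by the gauge invariance of the construction.

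To make the inverse global on $\Uc(R)$, I would apply the implicit function theorem to $F(\alpha,\psi) := \langle e^{-i\alpha}\psi, i\eta\rangle$, whose $\alpha$-derivative $-\langle e^{-i\alpha}\psi,\eta\rangle$ equals $-1$ at $(\alpha_0, e^{i\alpha_0}\eta)$. The gauge covariance $F(\alpha+\beta, e^{i\beta}\psi) = F(\alpha,\psi)$ guarantees that the local solutions $\alpha(\psi)$ glue into a smooth globally defined map $\alpha: \Uc(R) \to \T$ (for $R$ small enough). One then sets $1 + r(\psi) := \langle e^{-i\alpha(\psi)}\psi,\eta\rangle$ and $u(\psi) := e^{-i\alpha(\psi)}\psi - (1+r(\psi))\eta$, which lies automatically in $W$, and the resulting $\chi^{-1}$ is smooth with bounded derivatives on $\Uc(R)$.

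The delicate point is the estimate \eqref{eq:bdmerde}, since $\alpha(\psi)$ is pinned down by an $L^2$-orthogonality condition whereas $\dist(\psi,\Gamma)$ is measured in $H^1$. To bridge the gap I would compare $\alpha(\psi)$ to the $H^1$-best phase $\alpha^{\ast}$ minimizing $\beta \mapsto \|\psi - e^{i\beta}\eta\|_{H^1}$, and introduce $d := \dist(\psi,\Gamma)$, $w := e^{-i\alpha^{\ast}}\psi - \eta$ (so that $\|w\|_{H^1} = d$) and $\theta := \alpha(\psi) - \alpha^{\ast}$. The defining relation $F(\alpha(\psi),\psi)=0$ then becomes, using $N(\eta)=1$,
\[
\sin\theta = \langle e^{-i\theta}w, i\eta\rangle,
\]
yielding $|\sin\theta| \leq \|w\|_{L^2} \leq d$ and hence $|\theta| \leq 2d$ for $d$ small. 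Plugging this back in gives
\[
\|e^{-i\alpha(\psi)}\psi - \eta\|_{H^1} \leq \|w\|_{H^1} + |1 - e^{-i\theta}|\,\|\eta\|_{H^1} \leq C\,d,
\]
and since $|r(\psi)| = |\langle e^{-i\alpha(\psi)}\psi - \eta,\eta\rangle| \leq C d$, the triangle inequality produces $\|u(\psi)\|_{H^1} \leq C\,d$, which is \eqref{eq:bdmerde}. The hard step is really this comparison of two phases defined by inner products of different regularity; everything else is standard implicit function theorem bookkeeping.
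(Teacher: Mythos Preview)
Your argument is correct, but the paper takes a more elementary and direct route that avoids both the implicit function theorem and the phase-comparison calculation you flag as ``the hard step''. Instead of solving $\langle e^{-i\alpha}\psi,i\eta\rangle=0$ implicitly, the paper writes down the complex projection coefficient $z(\psi):=\langle\psi,\eta\rangle+i\langle\psi,i\eta\rangle=\int\psi\bar\eta$, observes that $|z(\psi)|\in[1/2,3/2]$ on $\Uc(R)$, and simply sets $\hat\alpha(\psi)=\arg z(\psi)$, $\hat r(\psi)=|z(\psi)|-1$, $\hat u(\psi)=e^{-i\hat\alpha(\psi)}(\psi-z(\psi)\eta)$. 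This yields the same inverse as yours (your IFT condition $\langle e^{-i\alpha}\psi,i\eta\rangle=0$ is exactly $\Im(e^{-i\alpha}z(\psi))=0$, i.e.\ $\alpha=\arg z(\psi)$), but obtained by explicit formula rather than by abstract inversion. The payoff comes in the estimate \eqref{eq:bdmerde}: once $\hat u$ is exhibited as a composition of explicitly smooth maps with bounded derivatives on $\Uc(R)$, it is uniformly Lipschitz there; since $\hat u$ vanishes on $\Gamma$, one immediately gets $\|\hat u(\psi)\|_{H^1}\leq C\|\psi-\psi^*\|_{H^1}=C\dist(\psi,\Gamma)$ for $\psi^*\in\Gamma$ a minimizer. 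Your explicit comparison of $\alpha(\psi)$ with the $H^1$-optimal phase $\alpha^*$ is correct and gives a slightly sharper constant, but it is not needed: the Lipschitz observation subsumes it in one line. What your approach buys is transparency about \emph{why} the $L^2$-pinned phase is $H^1$-close to optimal; what the paper's approach buys is brevity and a formulation that transfers verbatim to the discrete setting of Lemma~\ref{lem:2}.
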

\begin{proof}
The first part of this lemma is clear using the explicit formula for $\chi$. To prove the second one, let us consider the projection of $\psi$ onto the plane generated by $(\eta,i\eta)$:  
$$
\langle \psi, \eta \rangle \eta + \langle \psi, i \eta \rangle i \eta =: z(\psi) \eta
$$
with $z(\psi) = \langle \psi, \eta \rangle + i  \langle \psi, i \eta \rangle=\int \psi \bar\eta \in \C$. Note that the application $\psi \mapsto z(\psi)$ is  smooth with bounded derivatives from $V$ to $\C$. Moreover, we have 
$$
\dist(\psi,\Gamma)^2 \geq \inf_{\alpha} N(\psi - e^{i\alpha} \eta) \geq ||z(\psi)|^2 - 1|.
$$
Hence for $R\leq 1/2$ and for all $\psi\in \Uc(R)$,  we have $|z(\psi)| \in [1/2,3/2]$. 
This shows that the applications
$$
\Uc(R) \ni\psi \mapsto\hat \alpha(\psi) = \arg(z(\psi) ) \in \T
$$
and 
$$
\Uc(R) \ni\psi \mapsto \hat r(\psi) = |z(\psi)| - 1 \in [-1/2,1/2]
$$
are well defined and smooth with bounded derivatives on $\Uc(R)$ (as composition of smooth functions with bounded derivatives).  Moreover, we have $\psi - z(\psi) \eta \in W$: as $W$ is invariant under the  multiplication by complex numbers, the function 
$$
\hat u(\psi) :=  e^{-i \hat\alpha(\psi)} \psi - (1+\hat r(\psi))\eta = e^{-i\hat \alpha(\psi)} (\psi - z(\psi) \eta)
$$
is in $W$, smooth for $\psi \in \Uc(R)$, and satisfies $\psi = \chi(\hat \alpha(\psi), \hat r(\psi), \hat u(\psi))$.  

To prove \eqref{eq:bdmerde} let $\psi^* \in \Gamma$ be the element of $\Gamma$ realizing the minimum in the right-hand side (which exists by compactness of $\Gamma$). As $\psi^* \in \Gamma$ we have $\hat u(\psi^*) = 0$.  As the fonction $\psi \mapsto \hat u(\psi)$ is uniformly Lipschitz in $\Uc(R)$, we have 
$$
\Norm{\hat u(\psi)}{H^1}Ê\leq C \Norm{\psi - \psi^*}{H^1} = C \dist(\psi, \Gamma), 
$$
which gives the result. 
\end{proof}

Let us now define the function $u \mapsto r(u)$ from $W$ to $\R$ by the implicit relation 
$$
N(\chi(\alpha,r(u), u)) = 1. 
$$
 By explicit calculation, we have 
\begin{equation}
\label{eq:rNu}
r(u) = -1 + \sqrt{1 - N(u)},
\end{equation}
from which we deduce that $r(u)$ is well defined and smooth in a
 neighborhood of $0$ in $H^1$, and moreover that $\Norm{r(u)}{H^1}
 = \mathcal{O}(\Norm{u}{H^1}^2)$ if $u$ is sufficiently small.  Hence,
 $(\alpha,u) \mapsto \chi(\alpha,r(u), u)$ is a local parametrization
 of $\Sc$ in a neighborhood of $\Gamma \subset \Sc$, where
\begin{equation}
\label{s}
\Sc:=\left\{\psi\in V\, | \, N(\psi)=1  \right\}. 
\end{equation}
Now let us define the function
\begin{equation}
\label{eq:defHc}
\Hc(u) = H(\chi(\alpha,r(u),u)), 
\end{equation}
which is well defined on $W$ by gauge invariance of $H$. Moreover, this function is smooth in a neighborhood of $0$. 
Then it can be shown (see \cite{Frohlich04}) that $u = 0$ is a non degenerate minimum of $\Hc(u)$: we have 
$$
\dd \Hc (0) = 0, \quad \mbox{and}\quad  \forall\, U \in W, \quad \dd^2 \Hc(0)(U,U) \geq c \Norm{U}{H^1}^2. 
$$
Note that as $\Hc$ is smooth with locally bounded derivatives, the last coercivity estimate extends to a neighborhood of $0$ uniformly: there exist positive constants $c$ and $\rho$ such that 
\begin{equation}
\label{eq:coerci}
\forall\, u \in B(\rho), \quad\forall\, U \in W,  \quad \dd^2 \Hc(u)(U,U) \geq c \Norm{U}{H^1}^2, 
\end{equation}
where $B(\rho)$ denotes the ball of radius $\rho$ in $W$. In other words, the function $\Hc$ is strictly convex on $B(\rho)$ and has a strict minimum at $u = 0$. 

With these results at hand, let $\psi \in \Sc$, and assume that $\dist(\psi,\Gamma)$ is small enough so that we can write
$$
\psi = e^{i\alpha}( (1 + r(u))\eta + u),  
$$
for some $(\alpha,u) \in \T \times W$. 
Then for some constant $C$ an sufficiently small $u$, we have 
$$
\dist(\psi,\Gamma) \leq \Norm{\psi - e^{i\alpha}\eta}{H^1} \leq  C ( r(u) + \Norm{u}{H^1}) \leq C \Norm{u}{H^1}. 
$$
Now as $u = 0$ is a minimum of the strictly convex function $\Hc$ on the ball $B(\rho)$, we can write 
$$
H(\psi) - H(\eta) = \Hc(u) - \Hc(0) > \gamma \Norm{u}{H^1}^2 >  c \dist(\psi,\Gamma)^2
$$
for some constants $\gamma$ and $c > 0$ depending only on $\rho$. Then a Taylor expansion of $\Hc$ around $u = 0$ shows that 
$$
| \Hc(u) - \Hc(0) | \leq C \Norm{u}{H^1}^2,
$$
for some constant $C$ depending on $\rho$ and $H$ but not on $u \in B(\rho)$. Hence using \eqref{eq:bdmerde} we obtain 
the existence of constants $c$, $C$ and $R_0 > 0$ such that for all $\psi \in \Sc$ such that $\dist(\psi,\Gamma) < R_0$, we have 
$$
c\dist(\psi,\Gamma)^2 \leq |ÊH(\psi) - H(\eta)| \leq C \dist(\psi,\Gamma)^2. 
$$

The stability result \eqref{eq:bite} is then an easy consequence of
this relation: Assume that $\psi_0 \in \Sc$ satifies
$\dist(\psi(0),\Gamma)\leq \delta < \delta_0$ where $\delta_0 < R_0$,
and let $\psi(t)$, $t >0$ be the solution of \eqref{nls} starting at
$\psi(0)\equiv \psi_0$.  Then by preservation of the energy $H$ and
norm $N$, we have $\psi(t) \in \Sc$ for all $t > 0$, and moreover as
long as $\psi(t)$ is such that $\dist(\psi(t),\Gamma) < R_0$ we can
write
\begin{equation}
\label{eq:aslongas}
c\dist(\psi(t),\Gamma)^2 \leq |ÊH(\psi(t)) - H(\eta)| = |ÊH(\psi(0)) - H(\eta)| \leq C \dist(\psi(0),\Gamma)^2. 
\end{equation}
Hence if $\delta_0$ is small enough, this shows that for all $t$, $\dist(\psi(t),\Gamma) < R_0$ and that \eqref{eq:aslongas} is in fact valid for all times $t > 0$. 
This implies \eqref{eq:bite} in the case $N(\psi)=1$. %The general case will be discussed below.

\section{An abstract result}

In this section, we prove an abstract result for the existence and stability of discrete solitons. We first give conditions ensuring that a discrete Hamiltonian acting on a discrete subspace of $H^1$ possesses a minimizing soliton. We then show how the existence of a discrete flow (almost) preserving the Hamiltonian and the $L^2$ norm ensures the numerical orbital stability over long times. In the next sections, we will apply this result to the three levels of discretization described above.  

\subsection{Approximate problems}
We consider a set of parameter $\Sigma\in \R^p$ and a function $\epsilon:\Sigma \to \R^+$. This function will measure the ``distance" between the discrete  and continuous problems. 

For all $ \mu \in \Sigma$, we consider a Hilbert space $V_ \mu$ equipped with a norm $\Norm{\cdot}{\mu}$. For a given number $R$, we denote by $B_\mu(R)$ the ball of radius $R$ in $V_\mu$. Moreover, for a given $k \geq 0$ a function $F: V_\mu \to \C$ of class $\mathcal{C}^k$, and a given $\psi_\mu \in V_\mu$, we set for all $n = 0,\ldots,k$ 
$$
\Norm{\dd^n F(\psi_\mu)}{\mu}  = \sup_{U^1,\ldots,U^n \in V_\mu\backslash{\{0\}}} \frac{|\dd^n F(\psi_\mu)(U^1,\ldots,U^n)|}{\Norm{U^1}{\mu} \ldots \Norm{U^n}{\mu}}
$$
and we set 
$$
\Norm{F}{\mathcal{C}^k(B_\mu(R))} = \sup_{n = 0,\ldots,k}\,  \sup_{\psi_\mu \in B_\mu(R)} \Norm{\dd^n F(\psi_\mu)}{\mu}. 
$$
Moreover, we say that $F$ is {\em gauge invariant}Ê if it satisfies, for all $\alpha\in \T$ and all $\psi_\mu \in V_\mu$, $F(e^{i\alpha} \psi_\mu) = F(\psi_\mu)$. Similarly, we say that $G: V_\mu\times V_\mu \to \C$ is gauge invariant if for all $\varphi_\mu$ and $\psi_\mu$ in $V_\mu$, and all $\alpha \in \T$, we have $G(e^{i\alpha} \varphi_\mu, e^{i\alpha} \psi_\mu) = G(\varphi_\mu,\psi_\mu)$. 

We assume that the family $(V_\mu)_{\mu \in \Sigma}$ satisfies the following assumptions:  

\begin{itemize}
\item[\textbf{(i)}] For all $\mu \in \Sigma$, there exist a linear embedding $i_ \mu : V_ \mu\to H^1$ and a projection $\pi_ \mu: H^1 \mapsto V_ \mu$ that are gauge invariant in the sense that for all $\alpha \in \T$ and $\psi_\mu \in V_ \mu$, $e^{i\alpha} i_ \mu \psi_ \mu = i_ \mu e^{i\alpha} \psi_ \mu$ and for all $\psi \in V$,  $e^{i\alpha} \pi_ \mu \psi = \pi_ \mu e^{i\alpha} \psi$. Morever, we assume that $i_ \mu$ and $\pi_ \mu$ are real in the sense that $\overline {i_\mu \psi_\mu} = i_\mu  \overline \psi$ and $\overline {\pi_\mu \psi_\mu} = \pi_\mu  \overline \psi$, and that they satisfy the relation $\pi_\mu \circ i_\mu = \mathrm{id}\hspace{-0,15cm}\mid_{V_\mu}$. Finally, we assume that there exists a constant $R_0 > 1$ such that for all $\mu \in \Sigma$, and $\varphi_\mu \in B_\mu(R_0)$, 
$$
\big|\Norm{\varphi_\mu}{\mu}^2 - \Norm{i_\mu \varphi_\mu}{H^1}^2 \big| \leq \epsilon(\mu)\Norm{i_\mu \varphi_\mu}{H^1}^2.
$$
%and 
%$$
%\forall\, \varphi \in V, \quad \Norm{\pi_\mu \varphi}{\mu} \leq C\Norm{\varphi}{H^1}
%$$
%for some constant $C$ independent of $\mu$. 

\item[\textbf{(ii)}] 
For all $\mu \in \Sigma$, there exists a gauge invariant real scalar product $\langle \, \cdot\, , \, \cdot\,  \rangle_\mu$ such that setting $N_\mu(\psi_\mu) = \langle \psi_\mu, \psi_\mu \rangle_\mu$,  we have $N_\mu(\psi_\mu) \leq \Norm{\psi_\mu}{\mu}^2$ and 
$$
\Norm{N \circ i_ \mu - N_ \mu}{\mathcal{C}^2(B_ \mu(R_0))} \leq \epsilon(\mu).  
$$
\item[\textbf{(iii)}]
For all $ \mu \in \Sigma$, there exists a gauge invariant function $H_ \mu: V_ \mu \to \R$ which is a modified Hamiltonian in the sense that
$$
\Norm{H \circ i_ \mu - H_ \mu}{\mathcal{C}^2(B_ \mu(R_0))} \leq \epsilon(\mu). 
$$

\item[\textbf{(iv)}]
If $\eta$ is the continuous soliton \eqref{phis} defined in the previous section, we have for all $\mu \in \Sigma$ 
\begin{equation}
\label{eq:approxeta}
\Norm{i_ \mu\pi_ \mu \eta - \eta}{H^1} \leq \epsilon( \mu). 
\end{equation}

\end{itemize}

\medskip 

Note that using \textbf{(i)}, there exist constants $c$, $C$ and
$\epsilon_0$ such that for $\psi_\mu \in V_\mu$ and $\mu \in \Sigma$
such that $\epsilon(\mu) < \epsilon_0$, we have
\begin{equation}
\label{eq:normmu}
c\Norm{i_ \mu \psi_ \mu}{H^1} \leq \Norm{\psi_ \mu}{ \mu} \leq C\Norm{i_ \mu \psi_ \mu}{H^1}. 
\end{equation}

In the rest of this Section, we will assume that the hypothesis \textbf{(i)--(iv)} are satisfied.

%%%%%%%%%%
\subsection{Local coordinate system}
%%%%%%%%%%

We will assume here that all the $\mu \in \Sigma$ considered satisfy the relation $\epsilon(\mu) < \epsilon_0$ for some constant $\epsilon_0$ to be precised along the text. 
In echo to \eqref{s} we define for all $\mu \in \Sigma$
$$
\Sc_\mu = \{ \psi_\mu \in V_\mu \, | \, N_\mu(\psi_\mu) = 1\}, 
$$
and the tangent space to $\pi_\mu \eta$ (compare \eqref{eq:W}): 
$$
W_\mu = \{Êu_\mu \in V_\mu \, | \, \langle u_\mu, \pi_\mu \eta \rangle_\mu =  \langle u_\mu, i \pi_\mu \eta \rangle_\mu = 0 \}. 
$$
Note that $i_\mu W_\mu$ is not included in $W$. 

By a slight abuse of notation, we will write $u_\mu \in B_\mu(\gamma)$ the ball of radius $\gamma$ in $W_\mu$ (instead of $B_\mu(\gamma) \cap W_\mu$) for $\gamma > 0$. We also set for $R > 0$ (compare \eqref{eq:UR})
\begin{equation}
\label{eq:URmu}
\{Ê\psi_\mu \in V_\mu\, | \, \dist_\mu(\psi_\mu,\pi_\mu \Gamma) \leq \gamma\}, 
\end{equation}
where $\dist_\mu$ denotes the distance measured in the norm $\Norm{\cdot}{\mu}$ and where 
$$
\pi_\mu \Gamma := \bigcup_{\alpha \in \R} \{ e^{i\alpha} \pi_\eta \eta\}. 
$$

We then define the discrete application $\chi_\mu$ (see \eqref{eq:chi}):
$$
\T \times \R \times W_\mu \ni (\alpha,r,u_\mu) \mapsto \chi_\mu(\alpha,r,u_\mu) = e^{i\alpha}(  (1+r)\pi_\mu\eta + u_\mu) \in V_\mu.
$$
\begin{lemma}
\label{lem:2}
There exist constants $\epsilon_0$, $r_0$, $C$ and $R$ such that for
all $\mu \in \Sigma$ with $\epsilon(\mu) < \epsilon_0$, the
application $\chi_\mu$ is smooth and bounded with uniformly bounded
derivatives (with respect to $\mu$) from $\T \times [-r_0,r_0] \times
B_\mu(R)$ to $V$, and such for all $\varphi_\mu \in \Uc_\mu(R)$, there
exists $(\alpha,r,u_\mu) \in \T \times \R \times W$ such that
$\varphi_\mu = \chi_\mu(\alpha,r,u_\mu)$. Moreover, the application
$\chi_\mu^{-1}$ is smooth with uniformly bounded derivatives on
$\Uc_\mu(R)$, and for all $\psi_\mu \in \Uc_\mu(R)$, we have
\begin{equation}
\label{eq:bdmerde2}
\Norm{u_\mu(\psi_\mu)}{H^1} \leq C \dist_\mu(\psi_\mu, \pi_\mu\Gamma). 
\end{equation}
\end{lemma}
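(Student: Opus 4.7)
My plan is to mirror the proof of Lemma 4.1 step by step, substituting $\pi_\mu \eta$ for $\eta$ throughout and working with the discrete scalar product $\langle \cdot , \cdot \rangle_\mu$. The key preliminary observation is that assumptions \textbf{(ii)}, \textbf{(iv)} and the norm equivalence \eqref{eq:normmu} imply $N_\mu(\pi_\mu \eta) = 1 + \mathcal{O}(\epsilon(\mu))$ and that $\Norm{\pi_\mu \eta}{\mu}$ is uniformly bounded in $\mu$. Fixing $\epsilon_0$ small enough ensures $N_\mu(\pi_\mu \eta) \in [1/2, 3/2]$, so that normalization by this quantity is harmless, and all constants produced below depend only on $\eta$, $R_0$ and $\epsilon_0$.

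The forward direction is immediate: $\chi_\mu$ is an affine combination of $\pi_\mu \eta$ and $u_\mu$ followed by multiplication by $e^{i\alpha}$, so the smoothness of $\chi_\mu$ and the uniform bounds on its derivatives on $\T \times [-r_0, r_0] \times B_\mu(R)$ follow from the explicit formula. For the inverse, I define
\begin{equation*}
z_\mu(\psi_\mu) = \langle \psi_\mu, \pi_\mu \eta \rangle_\mu + i \langle \psi_\mu, i \pi_\mu \eta \rangle_\mu \in \C,
\end{equation*}
which by gauge invariance of $\langle \cdot , \cdot \rangle_\mu$ is $\C$-linear in $\psi_\mu$ and satisfies $z_\mu(e^{i\alpha} \pi_\mu \eta) = e^{i\alpha} N_\mu(\pi_\mu \eta)$, together with the Cauchy--Schwarz type bound $|z_\mu(\varphi_\mu)|^2 \leq N_\mu(\pi_\mu \eta)\, N_\mu(\varphi_\mu)$. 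Combined with the inequality $N_\mu(\cdot) \leq \Norm{\cdot}{\mu}^2$ from \textbf{(ii)}, this yields
\begin{equation*}
\big| |z_\mu(\psi_\mu)| - N_\mu(\pi_\mu \eta) \big| \leq C \dist_\mu(\psi_\mu, \pi_\mu \Gamma),
\end{equation*}
so that $|z_\mu(\psi_\mu)| \in [1/4, 7/4]$ for $\psi_\mu \in \Uc_\mu(R)$ with $R$ and $\epsilon_0$ small enough. Setting $\hat \alpha_\mu(\psi_\mu) = \arg z_\mu(\psi_\mu)$, $\hat r_\mu(\psi_\mu) = |z_\mu(\psi_\mu)| / N_\mu(\pi_\mu \eta) - 1$, and $\hat u_\mu(\psi_\mu) = e^{-i \hat \alpha_\mu(\psi_\mu)} \psi_\mu - (1 + \hat r_\mu(\psi_\mu)) \pi_\mu \eta$, a direct computation exploiting the $N_\mu$-orthogonality of the projection onto the plane spanned by $(\pi_\mu \eta, i \pi_\mu \eta)$ and the gauge invariance of $W_\mu$ gives $\hat u_\mu(\psi_\mu) \in W_\mu$ together with $\chi_\mu(\hat \alpha_\mu, \hat r_\mu, \hat u_\mu) = \psi_\mu$.

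The three maps $\hat \alpha_\mu$, $\hat r_\mu$, $\hat u_\mu$ are then smooth on $\Uc_\mu(R)$ with uniformly bounded derivatives, as compositions of smooth functions whose derivatives admit $\mu$-independent bounds. The Lipschitz estimate \eqref{eq:bdmerde2} will then follow from the mean value theorem applied to $\hat u_\mu$, using $\hat u_\mu \equiv 0$ on $\pi_\mu \Gamma$ and picking $\psi_\mu^* \in \pi_\mu \Gamma$ realizing the distance; the $H^1$ norm on the left-hand side is controlled via the embedding $i_\mu$ and \eqref{eq:normmu}. The main technical obstacle is to keep every constant genuinely uniform in $\mu$: each use of Cauchy--Schwarz, each norm equivalence and each implicit inversion must be checked to produce bounds depending only on $\eta$, $R_0$ and $\epsilon_0$. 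This uniformity is automatic once $\epsilon_0$ is chosen small enough that $N_\mu(\pi_\mu \eta)$ and the constants in \eqref{eq:normmu} are uniformly controlled.
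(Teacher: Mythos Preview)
Your proposal is correct and follows essentially the same approach as the paper, which explicitly says the proof is identical to that of Lemma~\ref{lem:1} after replacing $\langle\,\cdot\,,\,\cdot\,\rangle$, $N$, $\eta$ by their $\mu$-versions and invoking hypotheses \textbf{(i)--(iv)} for uniformity; the paper likewise singles out the estimate $|N_\mu(\pi_\mu\eta)-1|\leq C\epsilon(\mu)$ as the key preliminary fact. Your normalization $\hat r_\mu = |z_\mu|/N_\mu(\pi_\mu\eta) - 1$ is the correct adaptation of the paper's $\hat r = |z|-1$ to the situation where $N_\mu(\pi_\mu\eta)$ is only approximately one, and your remark about controlling the $H^1$ norm on the left of \eqref{eq:bdmerde2} via \eqref{eq:normmu} is appropriate.
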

\begin{proof}
The proof is exactly the same as the one of Lemma \ref{lem:1} by replacing $\langle\, \cdot\, , \, \cdot \, \rangle$ by $\langle\, \cdot\, , \, \cdot \, \rangle_\mu$, $N$ by $N_\mu$ and $\eta$ by $\pi_\mu \eta$. The fact that the constants are uniform in $\mu$ is a consequence of the direct construction made in the proof of this Lemma and of the hypothesis  $\textbf{(i)-(iv)}$. Note that we use the fact that 
\begin{equation}
\label{eq:Nmu1}
|N_ \mu(\pi_ \mu \eta) - 1 |\leq C \epsilon(\mu), 
\end{equation}
for some constant $C$ independent on $\mu$, which is a consequence of 
\textbf{(ii)} and \eqref{eq:approxeta}, 
provided $\epsilon(\mu) < \epsilon_0$ is small enough to ensure that $\Norm{\pi_\mu\eta}{\mu} < R_1$ (which is possible upon using \eqref{eq:approxeta} and \eqref{eq:normmu}). 
\end{proof}

Note that using the gauge invariance of $i_\mu$, we have for all $(\alpha,r,u_\mu) \in \T \times \R \times W_\mu$
$$
i_\mu \chi_\mu(\alpha,r,u_\mu) - \chi(\alpha,r,i_\mu u_\mu) =
e^{i\alpha}(1+r)( i_\mu \pi_\mu \eta - \eta)
$$
and hence for all $u_\mu \in W_\mu$, and $r \in \R$, 
\begin{equation}
\label{eq:ichi}
\Norm{i_\mu \chi_\mu(\alpha,r,u_\mu) - \chi(\alpha,r,i_\mu u_\mu)}{H^1} \leq (1 + |r|) \epsilon(\mu). 
\end{equation}

%
%Using the gauge invariance of $i_\mu$, we have for all $(\alpha,r,u_\mu) \in \T \times \R \times W_\mu$
%$$
%i_\mu \chi(\alpha,r,u_\mu) - \chi(\alpha,r,i_\mu u_\mu) = e^{i\alpha}(1+r)( i_\mu \pi_\mu \eta - \eta)
%$$
%and hence for all $u_\mu \in W_\mu$, and $r \in \R$, 
%\begin{equation}
%\label{eq:ichi}
%\Norm{i_\mu \chi_\mu(\alpha,r,u_\mu) - \chi(\alpha,r,i_\mu u_\mu)}{H^1(\R)} \leq (1 + |r|) \epsilon(\mu). 
%\end{equation}
%
%Let $\psi_\mu \in V_h$. If $\dist(i_\mu \psi_\mu ,\Gamma) < R$ then there exists $(\alpha,r, u)$ such that $i_\mu \psi_\mu = \chi(\alpha,r,u)$. Hence we have using the linearity and gauge invariance of $\pi_\mu$ (and the fact that $\pi_\mu \circ i_\delta = 1$, )
%$$
%\psi_\mu = \chi_\mu(\alpha_\mu,r_\mu,\pi_\mu\tilde u_\mu)
%$$
%and using $\Norm{\pi_\mu \tilde u_\mu}{\mu} \leq C \Norm{u}{H^1}$, this shows that $\chi_\mu$ is invertible 
%
%
%which shows that 
%
%Let $\psi_h$
%Using $\textbf{(i)}$ and $\textbf{(ii)}$, this shows that the application $\chi_\mu$ is invertible in a neighborhood of $\pi_\eta \Gamma$ of the form 
%\begin{equation}
%\label{eq:gamma}
%\{Ê\psi_\mu \in V_\mu\, | \, \dist_\mu(\psi_\mu,\pi_\mu \Gamma) \leq \gamma\}, 
%\end{equation}
%for some fixed constant $\gamma > 0$, 	
%where the distance is measure with respect to the norm $\Norm{\cdot}{\mu}$ in $V_\mu$, 
%

Following the formalism of the previous section, we define for all $\mu \in \Sigma$ the function $u_\mu \mapsto r_\mu(u_\mu)$ on $W_\mu$ by the implicit relation  
$$
N_\mu(\chi_\mu(\alpha,r_\mu(u_\mu),u_\mu)) = 1, 
$$
so that $(\alpha,u_\mu)$ is a local coordinate system close to a
rescaling of $\pi_\mu\Gamma$.  Using the definition of $N_\mu$ and
$\chi_\mu$, we immediately  obtain that
$$
r_\mu(u_ \mu) = -1 + \sqrt{1 - \frac{N_ \mu(u_ \mu)}{N_ \mu(\pi_ \mu \eta)}}. 
$$
With this explicit expression, and using again \textbf{(ii)} and \eqref{eq:ichi}
there exist constants $\rho_0$, $C$ and $\epsilon_0$ such that for all $\mu \in \Sigma$ with $\epsilon(\mu) < \epsilon_0$,  $r_\mu$ is $\mathcal{C}^2(B_\mu(\rho_0))$, and 
\be \label{estimr}
\Norm{r_\mu - r\circ i_\mu}{\mathcal{C}^2(B_\mu(\rho_0))} \leq C \epsilon(\mu), 
\ee
where the function $r$ is defined in \eqref{eq:rNu}. 
Now defining (compare \eqref{eq:defHc})
$$
\Hc_\mu(u_ \mu) := H_\mu(\chi_\mu(\alpha, r_\mu(u_\mu), u_\mu)), 
$$
the previous relations, together with \textbf{(iii)} and \eqref{eq:ichi} imply that if $\rho_0$ is sufficiently small, $\Hc_\mu$ is well defined on $B_\mu(\rho_0)$, and moreover
\begin{equation}
\label{eq:approxHcal}
\Norm{\Hc \circ i_\mu - \Hc_\mu}{\mathcal{C}^2(B_ \mu(\rho_0))} \leq C \epsilon(\mu). 
\end{equation}
for some constant $C$ independent of $\mu$, and for all $\mu \in \Sigma$ such that $\epsilon(\mu) < \epsilon_0$.

\subsection{Existence of a discrete soliton}
In the previous section, we have shown that the continuous function $\Hc$ can be approximated by a function $\Hc_\mu$ on balls of fixed radius $\rho_0$ in $V_\delta$. This is the key argument to prove the following result: 
\begin{theorem}
Under the previous hypothesis, there exists $\epsilon_0$ such that for all $ \mu \in \Sigma$ with $\epsilon( \mu) \leq \epsilon_0$, there exists a discrete soliton $\eta_ \mu \in V_ \mu$  that realizes the minimum of 
$H_ \mu$ under the constraint $N_ \mu(\psi_ \mu) = 1$,
and such that 
\be\label{5.7bis}
\Norm{\eta_ \mu - \pi_ \mu \eta}{V_ \mu} \leq \epsilon( \mu).
\ee
Moreover, there exist constants $C$, $\delta_0$ and $\gamma_0$ such that for all $\mu \in \Sigma$ with $\epsilon(\mu) < \epsilon_0$, and all $\delta < \delta_0$, 
\be
\label{eq:ctrl}
\dist( i_ \mu \psi_ \mu,\Gamma)^2  \leq  C( |H_ \mu(\psi_ \mu) - H_ \mu(\eta_ \mu)| + \epsilon(\mu) + \delta ), 
\ee
for all  $\psi_\mu$ such that $\dist( i_ \mu \psi_ \mu,\Gamma) \leq \gamma_0$ and $|N_\mu(\psi_\mu) - 1| \leq \delta$. 
\end{theorem}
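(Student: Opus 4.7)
The proof runs parallel to the variational argument of Section 4, with $\Hc_\mu$ playing the role of $\Hc$. The plan has two parts: first construct $\eta_\mu$ as a local minimizer of $\Hc_\mu$ on $W_\mu$, then transfer the continuous coercivity into the quadratic bound \eqref{eq:ctrl}.

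For the construction of $\eta_\mu$, I start from \eqref{eq:approxHcal}, which together with the continuous coercivity \eqref{eq:coerci} and the norm equivalence \eqref{eq:normmu} yields a uniform coercivity
\begin{equation*}
\forall\, u_\mu \in B_\mu(\rho_1)\cap W_\mu,\ \forall\, U_\mu \in W_\mu,\quad \dd^2\Hc_\mu(u_\mu)(U_\mu,U_\mu) \geq \tfrac{c}{2}\Norm{U_\mu}{\mu}^2,
\end{equation*}
provided $\epsilon_0$ is small enough. Since $\dd\Hc(0)=0$, the same $\Cc^2$-proximity gives $\Norm{\dd\Hc_\mu(0)}{\mu}\leq C\epsilon(\mu)$, so a standard strictly convex minimization argument produces a unique critical point $u^*_\mu\in W_\mu$ of $\Hc_\mu$ on $B_\mu(\rho_1)$, with $\Norm{u^*_\mu}{\mu}\leq C\epsilon(\mu)$. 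I then set $\eta_\mu:=(1+r_\mu(u^*_\mu))\pi_\mu\eta+u^*_\mu$; the explicit formula for $r_\mu$, together with \eqref{eq:Nmu1} and $\Norm{u^*_\mu}{\mu}^2 = O(\epsilon(\mu)^2)$, gives $|r_\mu(u^*_\mu)|\leq C\epsilon(\mu)$, from which \eqref{5.7bis} follows (after shrinking $\epsilon_0$ to absorb the constant). The fact that $\eta_\mu$ realizes the constrained minimum near $\pi_\mu\Gamma$ is then a direct consequence of gauge invariance and the parametrization $\Sc_\mu\ni\chi_\mu(\alpha,r_\mu(u_\mu),u_\mu)\mapsto \Hc_\mu(u_\mu)$.

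For the lower bound, let $\psi_\mu$ satisfy $\dist(i_\mu\psi_\mu,\Gamma)\leq \gamma_0$ and $|N_\mu(\psi_\mu)-1|\leq \delta$. Using \textbf{(iv)} and \eqref{eq:normmu} I first obtain $\dist_\mu(\psi_\mu,\pi_\mu\Gamma)\leq C(\dist(i_\mu\psi_\mu,\Gamma)+\epsilon(\mu))$, so that Lemma \ref{lem:2} applies for $\gamma_0$ and $\epsilon_0$ small, yielding $\psi_\mu=\chi_\mu(\alpha,r,u_\mu)$ with $u_\mu\in B_\mu(\rho_1)\cap W_\mu$. Expanding
\begin{equation*}
N_\mu(\psi_\mu) = (1+r)^2\,N_\mu(\pi_\mu\eta) + N_\mu(u_\mu)
\end{equation*}
using the orthogonality defining $W_\mu$, and comparing with $(1+r_\mu(u_\mu))^2 N_\mu(\pi_\mu\eta)+N_\mu(u_\mu)=1$, the assumption on $N_\mu(\psi_\mu)$ gives $|r-r_\mu(u_\mu)|\leq C\delta$. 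By gauge invariance of $H_\mu$ and its bounded $\Cc^1$ norm on our ball,
\begin{equation*}
|H_\mu(\psi_\mu) - \Hc_\mu(u_\mu)| \leq C\,|r - r_\mu(u_\mu)| \leq C\delta.
\end{equation*}
Strict convexity of $\Hc_\mu$ together with $H_\mu(\eta_\mu)=\Hc_\mu(u^*_\mu)$ then yields $\Norm{u_\mu-u^*_\mu}{\mu}^2\leq C(|H_\mu(\psi_\mu)-H_\mu(\eta_\mu)|+\delta)$, and using $\Norm{u^*_\mu}{\mu}\leq C\epsilon(\mu)$ turns this into a bound on $\Norm{u_\mu}{\mu}^2$ with an extra $\epsilon(\mu)$ term. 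Finally, from $i_\mu\psi_\mu=e^{i\alpha}((1+r)i_\mu\pi_\mu\eta+i_\mu u_\mu)$, the bound $|r|\leq C(\delta+\epsilon(\mu)+\Norm{u_\mu}{\mu}^2)$, and \textbf{(iv)}, I get $\dist(i_\mu\psi_\mu,\Gamma)\leq C(\delta+\epsilon(\mu)+\Norm{u_\mu}{\mu})$; squaring and substituting the previous bound on $\Norm{u_\mu}{\mu}^2$ yields \eqref{eq:ctrl}.

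The principal difficulty is bookkeeping: three independent $O(\epsilon(\mu))$ errors (from $\pi_\mu\eta\neq\eta$ via \eqref{eq:approxeta}, from $N_\mu\neq N\circ i_\mu$ via \textbf{(ii)}, and from $H_\mu\neq H\circ i_\mu$ via \textbf{(iii)}) must be propagated uniformly in $\mu$, and the $\delta$ from the relaxed $L^2$ constraint must be shown to enter only linearly so as not to disturb the quadratic coercivity. The core quantitative ingredient is the \emph{uniform} lower bound on $\dd^2\Hc_\mu|_{W_\mu}$; everything else reduces to Taylor expansion and the estimates of Section 5.2.
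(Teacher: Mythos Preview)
Your proof is correct and follows essentially the same route as the paper's: transfer of the coercivity \eqref{eq:coerci} to $\Hc_\mu$ via \eqref{eq:approxHcal} and \eqref{eq:normmu}, location of the unique minimizer $u^*_\mu$ within $O(\epsilon(\mu))$ of $0$ by strict convexity, the definition $\eta_\mu=\chi_\mu(0,r_\mu(u^*_\mu),u^*_\mu)$, and then the quadratic lower bound through the local chart $\chi_\mu$. The only (minor) variation is your treatment of the relaxed constraint $|N_\mu(\psi_\mu)-1|\leq\delta$: you compare $r$ with $r_\mu(u_\mu)$ directly via the explicit expansion of $N_\mu(\chi_\mu(\alpha,r,u_\mu))$, whereas the paper first corrects $\psi_\mu$ by a small $v_\mu$ with $\Norm{v_\mu}{\mu}\leq\delta$ so as to land on $\Sc_\mu$ and then applies the on-sphere estimate---both yield the same $C\delta$ contribution.
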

\begin{proof}
Let us take $\epsilon_0$ and $\rho_0$ as in the previous section. Recall that as $\eta$ is a minimizer of the continuous Hamiltonian $H$, and by definition of $\Hc$, we have $\dd \Hc(0) = 0$. 
Using \eqref{eq:approxHcal}, we deduce that for all $\mu \in \Sigma$ such that $\epsilon(\mu) < \epsilon_0$, 
\begin{equation}
\label{eq:dHc}
\Norm{\dd \Hc_\mu(0)}{\mu} \leq C \epsilon( \mu). 
\end{equation}
Moreover, for all $U \in W_\mu$, and $u_\mu \in B_\mu(\rho_0)$, we have using again \eqref{eq:approxHcal}
$$
| \dd^2 \Hc_\mu(u_\mu)(U,U) - \dd^2 \Hc(i_\mu u_\mu)(i_\mu U,i_\mu U)| \leq C \epsilon(\mu) \Norm{U}{\mu}^2 . 
$$
Using \eqref{eq:coerci} and \eqref{eq:normmu}, this shows that $\Hc_\mu$ is uniformly strictly convex in $B_\mu(\rho_0)$, i.e. satisfies 
$$
\forall\, u_\mu \in B_\mu(\rho_0), \quad 
\forall\, U \in W_\mu, \quad 
\dd^2 \Hc_\mu(u_\mu)(U,U) \geq c_0 \Norm{U}{\mu}^2 , 
$$
with a constant $c_0$ independent on $\mu$ such that $\epsilon(\mu) < \epsilon_0$ small enough. 

As $\Hc_\mu$ is strictly convex on the closed  ball $\overline{B_\mu}(\rho_0)$,  $\Hc_\mu$ reaches its minimum on $\overline{B_\mu}(\rho_0)$ at some point $u_\mu^*\in \overline{B_\mu}(\rho_0)$ (see for instance \cite{Ciarlet}). We want to prove that the minimum is reached in the interior of the ball. So assume on the contrary
that  $u_\mu^*$ is such that $\Norm{u_\mu^*}{\mu} = \rho_0$, then we have 
$$
\Hc_\mu(u_\mu^*) - \Hc_\mu(0) = \dd \Hc_\mu(0) \cdot u_\mu^* + h(u_\mu^*)
$$
with $h(u_\mu^*) > c_0 \Norm{u_\mu^*}{\mu}^2$. Hence, as $|\dd \Hc_\mu(0) \cdot u_\mu^*| \leq C \epsilon(\mu) \Norm{u_\mu^*}{\mu}$ (see \eqref{eq:dHc}) we get 
$$
\Hc_\mu(u_\mu^*) - \Hc_\mu(0) > c_0 \rho_0^2 - C\epsilon(\mu) \rho_0. 
$$
This shows that for $\epsilon_0$ sufficienly small, $\Hc_\mu(u_\mu^*) > \Hc_\mu(0)$ which is a contradiction. Hence the  $u_\mu^*$ is in the open ball $B_\mu(\rho_0)$ and thus 
$$
\dd \Hc_\mu(u^*_\mu) = 0. 
$$
Moreover, as $\Hc_\mu$ is uniformly convex on the ball $B_\mu(\rho_0)$, we have 
$$
\Norm{u^*}{\mu} \leq C \Norm{\dd \Hc_\mu(u^*) - \dd \Hc_\mu(0) }{\mu} \leq C \epsilon(\mu). 
$$
for some constant $C$ independent on $\mu$. 
Then setting 
\be\label{estimu}
\eta_ \mu := \chi_ \mu(0,r_ \mu(u_ \mu^*),u_ \mu^*)=(1+r_\mu(u_\mu^*))\pi_ \mu \eta+u_ \mu^*, 
\ee
we verify using \eqref{estimr} and \eqref{estimu} that we  have $\Norm{\pi_ \mu \eta - \eta_ \mu}{ \mu} \leq C\epsilon( \mu)$ for some constant $C$ independent on $\mu$. 

It remains to prove \eqref{eq:ctrl}. Let $\psi_ \mu \in V_\mu$ and $\alpha \in \T$, we have 
\begin{eqnarray*}
\Norm{i_\mu \psi_\mu - e^{i\alpha} \eta}{H^1} &\leq& \Norm{i_\mu \psi_\mu - e^{i\alpha} i_\mu \pi_\mu\eta }{H^1}Ê+ \Norm{i_\mu \pi_\mu \eta  -  \eta }{H^1}\\
&\leq& C\Norm{\psi_ \mu - e^{i\alpha} \pi_\mu\eta  }{\mu} + C\epsilon( \mu), 
\end{eqnarray*}
where we used \eqref{eq:normmu}. Hence we have for all $\psi_\mu$ 
\begin{equation}
\label{eq:ineq1}
\dist( i_\mu \psi_\mu,\Gamma) \leq C\dist_\mu( \psi_\mu, \pi_\mu\Gamma) + C \epsilon(\mu)
\end{equation}
for some constant independent of $\mu$. 
Similarly we prove that 
\begin{equation}
\label{eq:ineq2}
\dist_\mu( \psi_\mu, \pi_\mu\Gamma) \leq C \dist( i_\mu \psi_\mu,\Gamma) + C \epsilon(\mu),
\end{equation}
for some constant $C$ independent on $\mu$. 
Now let  $\psi_\mu$ be a function such that $\dist( i_\mu \psi_\mu,\Gamma) < \gamma_0$, with $\gamma_0$ small enough. 
Assume first that $N_\mu(\psi_\mu) = 1$. Using \eqref{eq:ineq2}, $\psi_\mu$ belongs to a set  $\Uc_\mu(\gamma)$ with a constant $\gamma$ depending on $\gamma_0$ and $\epsilon_0$. If these parameters are sufficiently small, we can define an element $u_\mu$ of $B_\mu(\rho_0)$ and $\alpha\in \T$ such that $\psi_\mu = \chi_\mu(\alpha,r_\mu(u_\mu), u_\mu)$ (recall that $N_\mu(\psi_\mu) = 1)$ with $u_\mu$ satisfying \eqref{eq:bdmerde2}. 
Hence we have 
$$
|ÊH_\mu(\psi_\mu) - H_\mu(\eta_\mu) | = | \Hc(u_\mu) - \Hc(u_\mu^*)|, 
$$ 
where $u_\mu^*$ is the minimizer of $\Hc$, associated with the discrete soliton $\eta_\mu$. This implies that 
there exists a constant $C$ independent of $\mu$ such that
$$
\Norm{u_\mu - u_\mu^*}{\mu}^2\leq C |ÊH_\mu(\psi_\mu) - H_\mu(\eta_\mu) | .
$$
Then using that $\Norm{u_\mu^*}{\mu} \leq C \epsilon(\mu)$,  that $\Norm{u_\mu}{\mu} = \dist_\mu(\psi_\mu,\pi_\mu \Gamma)+O(\epsilon(\mu))$, and the inequalities \eqref{eq:ineq1} and \eqref{eq:ineq2} we obtain \eqref{eq:ctrl} in the case $N_\mu(\psi_\mu) = 1$. Now if $N_\mu(\psi_\mu) \neq 1$ but $|N(\psi_\mu) - 1 | \leq \delta$ with $\delta$ sufficiently small, there exists a point $v_\mu$ such that $\Norm{v_\mu}{\mu} \leq \delta$ and $N(\psi_\mu -v_\mu) = 1$. We can then apply the previous estimate to $\psi_\mu - v_\mu$, and we use the uniform bounds on the derivative $H_\mu$ to conclude. The approximation $\psi_\mu\sim v_\mu$ gives rise to  the terms $C \delta$ in \eqref{eq:ctrl}. 
\end{proof}

\subsection{Discrete orbital stability}

%It is usefull to consider $H^1(\R,\C)$ as a real Hilbert
%space. Thus we will think of a complex function $\psi=p+iq$ as
%the sum of its real and imaginary parts. We also remark that with
%this notation, the $L^2$ scalar product is
%given by 
%\begin{equation}
%\label{l2}
%\left\langle\psi_1,\psi_2\right\rangle_{L^2}=\int_{\R}(p_1p_2+q_1q_2)dx\
%. 
%\end{equation}

In the previous paragraph, we have shown that the conditions \textbf{(i)--(iv)} are sufficient to ensure the existence of a modified soliton for the modified energy $H_\mu$, and that this soliton is sufficiently close to the exact soliton $\eta$ to allow the control of the distance between $\Gamma$ and $\psi_\mu$ via the distance between the Hamiltonian of $H_\mu(\psi_\mu)$ and $H_\mu(\eta_\mu)$, see \eqref{eq:ctrl}. As a consequence we obtain the following stability result

\begin{theorem}
\label{th:stab}
Assume that the hypothesis {\em{\textbf{(i)--(iv)}}} are satisfied, and assume moreover that for all $R_0$ and all $\mu \in \Sigma$ there exist $\beta(\mu) >0$ and an application $\Phi_ \mu: B_\mu(R_0) \to V_ \mu$  such that 
$$
\forall\, \psi_\mu \in B_\mu(R_0), \quad 
N_ \mu(\Phi_ \mu(\psi_ \mu)) = N_ \mu(\psi_ \mu)
$$
and
\begin{equation}
\label{eq:presham}
\forall\, \psi_\mu \in B_\mu(R_0), \quad |H_ \mu(\Phi_ \mu(\psi_ \mu)) - H_ \mu(\psi_ \mu)| \leq \beta( \mu). 
\end{equation}
Then there exist $\delta_0> 0$ and a constant $C$ such that for all positive $\delta < \delta_0$ and all $\mu\in \Sigma$ such that $\epsilon(\mu) < \epsilon_0$ and $\psi^0_\mu$ satisfying $\dist(i_ \mu \psi_ \mu^0,\Gamma) \leq  \delta$ then the sequence $(\psi_\mu^{n})_{n \geq 0}$ defined by  
$$
\forall\, n \geq 0, \quad 
\psi^{n+1}_ \mu = \Phi_ \mu(\psi^n_ \mu)
$$
satisfies
$$
\forall\, n \geq 0, \quad \dist(i_ \mu \psi_ \mu^n,\Gamma)  \leq C( \delta + \epsilon( \mu))
$$
as long as $n \beta(\mu) \leq \varepsilon( \mu) +  \delta$.
\end{theorem}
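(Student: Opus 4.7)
The strategy is a standard bootstrap argument built on the coercivity estimate \eqref{eq:ctrl} of the previous theorem, together with the two (quasi)conservation properties of $\Phi_\mu$. Three quantities need to be tracked along the iteration: the norm deviation $|N_\mu(\psi_\mu^n) - 1|$, the energy gap $|H_\mu(\psi_\mu^n) - H_\mu(\eta_\mu)|$, and the distance $\dist(i_\mu\psi_\mu^n, \Gamma)$ itself, which must stay below the threshold $\gamma_0$ for \eqref{eq:ctrl} to be applicable.

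I first establish estimates on the initial datum. Since $\dist(i_\mu\psi_\mu^0, \Gamma) \leq \delta$ and $N \equiv 1$ on $\Gamma$, smoothness of $N$ on $H^1$ combined with hypothesis \textbf{(ii)} and \eqref{eq:normmu} yields $|N_\mu(\psi_\mu^0) - 1| \leq C(\delta + \epsilon(\mu))$. Similarly, chaining comparisons between $H_\mu$ and $H \circ i_\mu$ via \textbf{(iii)}, between $H(i_\mu\psi_\mu^0)$ and $H(\eta)$ via Lipschitz control of $H$ on bounded sets of $H^1$, and between $H_\mu(\eta_\mu)$ and $H(\eta)$ via \eqref{5.7bis} and \eqref{eq:approxeta}, yields
$$
|H_\mu(\psi_\mu^0) - H_\mu(\eta_\mu)| \leq C(\delta + \epsilon(\mu)).
$$
Exact preservation of $N_\mu$ by $\Phi_\mu$ then gives $|N_\mu(\psi_\mu^n) - 1| \leq C(\delta + \epsilon(\mu))$ for every $n \geq 0$, while iterating the per-step bound \eqref{eq:presham} yields
$$
|H_\mu(\psi_\mu^n) - H_\mu(\eta_\mu)| \leq C(\delta + \epsilon(\mu)) + n\beta(\mu).
$$

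The bootstrap then closes as follows. Choose $\delta_0$ and $\epsilon_0$ small enough that $\delta < \gamma_0$ and that the right-hand side of the coercivity estimate below stays well below $\gamma_0^2$ throughout. Assume inductively that $\dist(i_\mu\psi_\mu^k, \Gamma) \leq \gamma_0$ for all $k \leq n$; then \eqref{eq:ctrl} applies at step $n$ with the norm-deviation parameter given by the bound on $|N_\mu(\psi_\mu^n) - 1|$ above, yielding
$$
\dist(i_\mu\psi_\mu^n, \Gamma)^2 \leq C\big(\delta + \epsilon(\mu) + n\beta(\mu)\big).
$$
Under the condition $n\beta(\mu) \leq \delta + \epsilon(\mu)$, the right-hand side is controlled by $C(\delta + \epsilon(\mu))$, which closes the bootstrap and yields the announced estimate.

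The main delicate point is the calibration of the three constants $\delta_0$, $\epsilon_0$ and $\gamma_0$: one must ensure that the iterates never leave the regime of validity of \eqref{eq:ctrl}, which forces compatibility between the constants produced by \eqref{eq:ctrl}, the Lipschitz bounds on $H_\mu$ from \textbf{(iii)}, and the comparison of $\Norm{\cdot}{\mu}$ with the ambient $H^1$ norm. Once this is arranged, no further structural property of $\Phi_\mu$ beyond \eqref{eq:presham} and exact preservation of $N_\mu$ is required.
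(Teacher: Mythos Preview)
Your proposal is correct and follows essentially the same approach as the paper's proof: both establish initial control on $|H_\mu(\psi_\mu^0)-H_\mu(\eta_\mu)|$ via \textbf{(iii)} and \eqref{5.7bis}, telescope the per-step energy drift \eqref{eq:presham}, exploit exact conservation of $N_\mu$, and close with a bootstrap built on the coercivity estimate \eqref{eq:ctrl}. The only cosmetic difference is that the paper phrases the bootstrap hypothesis as $\Norm{\psi_\mu^n}{\mu}\leq R_0$ (so that $\Phi_\mu$ remains defined and \eqref{eq:presham} applies), whereas you phrase it as $\dist(i_\mu\psi_\mu^n,\Gamma)\leq\gamma_0$ (so that \eqref{eq:ctrl} applies); since closeness to $\Gamma$ forces membership in a fixed ball and vice versa, these are equivalent formulations of the same argument.
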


\begin{proof}
Using the hypothesis on $\psi_\mu^0$ and \eqref{eq:normmu}, there exists $R_0$ depending only on $\delta_0$ such that $\psi_\mu^0 \in B_\mu(R_0/2)$ uniformly in $\mu$ and there exists $\tilde \nu\in\Gamma$ such that $\Norm{i_\mu\psi_\mu^0 - \tilde \nu}{H^1} \leq \delta$. Thus using the gauge invariance of $H$, we have $|H(i_\mu \psi_\mu^0)-H(\eta)|\leq C\delta$. Then with hypothesis {{\textbf{(iii)}}} and \eqref{5.7bis}, we get
$$
|H_\mu(\psi_\mu^0) - H_\mu(\eta_\mu)| \leq C (\delta + \epsilon(\mu)). 
$$ 
On the other hand, using \eqref{eq:presham}, we have for all $n \geq 0$
\begin{eqnarray*}
|H_\mu(\psi_\mu^n) - H_\mu(\eta_\mu)| &\leq & |H_\mu(\psi_\mu^0) - H_\mu(\eta_\mu)| + \sum_{k = 0}^{n-1} |H_\mu(\psi_\mu^{k+1}) - H_\mu(\psi_{\mu}^{k})|\\
&\leq& C (\delta + \epsilon(\mu)) + n \beta(\mu) \leq (C+1)( \delta + \epsilon(\mu))
\end{eqnarray*}
as long as $n \beta(\mu) \leq \varepsilon( \mu) +  \delta$ and $\Norm{\psi_\mu}{\mu} \leq R_0$. Using the fact that $N_\mu(\psi_\mu^n) = N_\mu(\psi_\mu^0) = 1 + \mathcal{O}(\delta)$ and \eqref{eq:ctrl}, we get
\begin{equation}
\label{eq:poc}
 \dist(i_ \mu \psi_ \mu^n,\Gamma)   \leq \tilde{C}( \delta + \epsilon(\mu))
\end{equation}
as long as $\Norm{\psi_\mu}{\mu} \leq R_0$ and for some constant $\tilde C$ independent of $\mu$ and $n$. Then by a bootstrap argument,  there exists $\delta_0$ and $\epsilon_0$ sufficiently small such that, for $0<\delta<\delta_0$ and $0<\epsilon<\epsilon_0$, \eqref{eq:poc} ensures that this is the case for $n \beta(\sigma) \leq \varepsilon( \mu) +  \delta$. This proves the result. 
\end{proof}

\section{Applications}

We now prove the three Theorems presented in Section 2. We only need to verify the hypothesis \textbf{(i)-(iv)} and to precise the constants $\epsilon(\mu)$ and $\beta(\mu)$.

\subsection{Discrete Schr\"odinger equation}

Consider the DNLS equation \eqref{dnls} for a given positive number $h > 0$. In the previous formalism, we set $\Sigma = \{ h \in \R^+\}$, and the natural modified Hamiltonian and $L^2$ norm are given by \eqref{eq:discrham}. 
We also define the real scalar product 
$$
\langle \psi, \varphi\rangle_h := \Re \Big(h \sum_{j \in \Z} \psi_j \overline{\varphi_j} \Big). 
$$
For all $\mu \in \Sigma$, the embedding $i_h$ is defined by \eqref{eq:ih}, and the projection $\pi_h$ by the application
$$
\forall\, j \in \Z,\quad (\pi_h \psi)_j = \psi(jh),
$$
for some $\psi \in H^1$. 
Defining the semi norm 
$$
\SNorm{\psi}{h}^2 = 2h \sum_{j\in\Z} \frac{|\psi_{j+1}-\psi_j |^2}{h^2}
$$
on $V_h$, we have by explicit calculation that 
\begin{equation}
\label{eq:seminorm}
\SNorm{\psi}{h} = \SNorm{i_h \psi}{H^1} 
\end{equation}
where $\SNorm{\psi}{H^1}$ denotes the semi norm in $H^1$.  This fact allows to prove \textbf{(i)} and \textbf{(ii)} with the function  $\epsilon: \Sigma \to \R$ defined by $\epsilon(h) = h$. This has already been proved in \cite[Lemma 4.2]{BP10}. Similarly, \textbf{(iii)} has been proved in \cite[Proposition 4.1]{BP10} with $\epsilon(h) = h$. 

Finally, by classical arguments on finite elements approximation, there exists an universal constant $C$ such
that for any function $\psi\in H^2$
\begin{equation}
\label{estim:discret}
\Norm{\pi_h\psi -\psi}{H^1}\leq C h
\Norm{\psi}{H^2}.\end{equation}
 This proves \textbf{(iv)} upon using \eqref{eq:normmu}. 

Let us define $\Phi_h^t(\psi)$ the flow associated with the Hamiltonian $H_h$. Using standard estimates, one  shows that it is well defined for sufficiently small $t$, say $0\leq t<t_0$, uniformly in $h$.  Theorem \ref{th:dnls} is then a consequence of Theorem \ref{th:stab} with $\beta(h) = 0$ and $\Phi_\mu = \Phi_h^t$ with $t \in (0,t_0)$. Remark that, in particular, since $\Phi^{nt}_\mu=(\Phi^t_\mu)^n$ remains localized around the curve $\Gamma$ of solitons for all $n$ and for all $t\in(0,t_0)$, the flow $\Phi_h^t(\psi)$ is defined globally.

%%%%%%%%%%%%%%%%%
\subsection{Dirichlet cut-off}
%%%%%%%%%%%%%%%%%

Recall that in comparison with the previous case, the space $V_{h,K}$ defined in \eqref{eq:VhK} is a finite dimensional space included in $V_h$. We have seen that the modified energy and norm $H_{h,K}$ and $N_{h,K}$, and the embedding $i_{h,K}$ are defined by restriction to $V_{h,K}$. To define the projection $\pi_{h,K}$, we set 
$$
(\pi_{h,K}(\psi))_j = 
\left\{
\begin{array}{ll}
\psi(jh) & \mbox{if}\quad |j| \leq K\\[2ex]
0 & \mbox{if}\quad |j|  > K. 
\end{array}
\right.
$$
With these definitions, it is clear that the hypothesis \textbf{(i)-(iii)} are satisfied with $\Sigma = \{(h,K) \in \R^+, \times \N\}$ and with {\em a priori} $\epsilon(\mu) = h$ for $\mu = (h,K)$. However, the estimate \eqref{eq:approxeta} is no longer true with the space cut-off without changing the definition of $\epsilon(\mu)$. 

To have an estimate of $\Norm{i_{h,K} \pi_{h,K} \eta - \eta}{H^1}$, we only need to estimate $\Norm{\pi_{h,K}\eta - \pi_{h}\eta}{h}$ which is equal to 
$$
\Norm{\pi_{h,K}\eta - \pi_{h}\eta}{h}^2 = 2 h \sum_{|j| > K} \frac{|\eta(jh)|^2}{h^2} + h \sum_{|j| > K}|\eta(jh)|^2
$$
By definition of $\eta$, there exist  constants $C_1$ and $\nu$ such that for all $x \in \R$, $|\eta(x)| \leq C_1 e^{-\nu|x|}$. 
Substituting this estimate in the previous one, we get
\begin{eqnarray*}
\Norm{\pi_{h,K}\eta - \pi_{h}\eta}{h}^2 &\leq&  2 C_1^2 h \sum_{|j| >K} \frac{e^{-2 \nu jh}}{h^2} + h \sum_{|j| > K} e^{-2 \nu jh}\\
&\leq & \frac{4 C_1^2 + 2}{h^2} h \sum_{n > K} e^{-2 \nu n h}\\
&\leq&  \frac{\gamma}{h^2} \exp(- \nu K h) 
\end{eqnarray*}
for some constant $\gamma$, and provided $h < h_0$ sufficiently small. 

This shows that \textbf{(iv)} is valid with the function 
\begin{equation}
\label{eq:epsi1}
\epsilon(\mu) = h + \frac{1}{h^2} \exp(-\nu Kh), \quad \mu = (h,K) \in \Sigma
\end{equation}
With these notations, Theorem \ref{th:dnlsdir} is a consequence of Theorem \ref{th:stab} with $\beta(\mu) = 0$.

% %%%%%%%%%%%%%%
%\subsection{Energy and norm preserving schemes}
%%%%%%%%%%%%%%%
%
%Using the notation of the previous paragraph, the first part of Theorem \ref{th:splitting} concerning the integrator $\Phi_{DFP}^\tau$ which preserve exactly the functions $H_{h,K}$ and $N_{h,K}$, as shown by a direct computation. Hence it is a direct consequence of Theorem \eqref{th:stab}, provided $\tau$ is sufficiently small to guarantee the existence of the flow on $B_{h,K}(R)$ for $R$ sufficiently small. We do not give more detail on this case, an prefer to concentrate on the problem of constructing the modified energy for splitting methods. 

 %%%%%%%%%%%%%%
\subsection{Time splitting method}
%%%%%%%%%%%%%%

Let us now consider the case where \eqref{dnlsdir} is discretized in time by a splitting method of the form $\Phi_A^\tau \circ \Phi_P^\tau$ as described in Section 2. The space discretization being the same as in the previous Sections, the hypothesis \textbf{(i)-(iii)} will be automatically fulfilled with the function $\epsilon$ defined in \eqref{eq:epsi1}. In particular, we can check directly that the norm $N_{h,K}$ is preserved by splitting schemes. However, splitting methods do not preserve the energy $H_{h,K}$ for given $h$ and $K$: more precisely, taking $H_\mu = H_{h,K}$ in \eqref{eq:presham} only yields an error of order $\beta(\mu) = h\tau$. 

In this section, we set 
$$
\Sigma := \{ (h,K,\tau) \in \R^+ \times \N \times \R^+\}.
$$
For $\mu = (h,K,\tau) \in \Sigma$, we set $V_{\mu} = V_{h,K}$,  $i_{\mu} = i_{h,K} = i_h$, and $\pi_{\mu} = \pi_{h,K}$. 

In the next section we will prove
\begin{theorem}
\label{prop.split}
Let  $R_0>0$ and $M \in \N$ be fixed. There exist $\tau_0$ and $h_0$ such that for all $\mu = (h,K,\tau) \in \Sigma$ satisfying $\tau < \tau_0$, $h < h_0$ and 
\begin{equation}
\label{sp.0cfl}
(2M+3)\frac{\tau}{h^2}<\frac{2\pi}{3}. 
\end{equation}
then there exist a constant $C$, depending only on $R_0$ and  $M$, and a smooth gauge invariant polynomial function
$H_{\mu} = H_{h,K,\tau}$ defined on $V_\mu$ 
%(, with smoothness parameters independent of $N,\mu,\tau$)
 such that
\begin{equation}
\label{sp.01}
\Norm{H_{\mu} -H \circ i_{\mu}}{C^2(B_{\mu}(R_0))}\leq
 C\frac{\tau}{h} 
 \end{equation} 
and
\begin{equation}
\label{sp.02}
\Norm{\Phi_P^\tau\circ\Phi_A^\tau(\psi)-\Phi^\tau_{H_\mu}(\psi)}{\mu}\leq C\tau^{M+1}\quad \text{for all }\psi\in V_\mu \text{ with } \Norm{\psi}{\mu}\leq R_0.
\end{equation} 
\end{theorem}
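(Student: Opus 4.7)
The plan is to construct $H_\mu$ as a truncated Baker--Campbell--Hausdorff (BCH) expansion, adapting the simplified backward error analysis of \cite{FG11}. Decompose $H_{h,K} = A + P$ on the finite-dimensional symplectic space $(V_\mu, \langle \cdot, \cdot \rangle_h)$, where $A$ is the quadratic part associated with the discrete Laplacian and $P = -\tfrac{h}{2}\sum_j |\psi_j|^4$ the quartic part, so that the splitting reads $\Phi_P^\tau \circ \Phi_A^\tau = \exp(\tau X_P)\exp(\tau X_A)$. The formal BCH formula produces a series $A + P + \sum_{k \geq 1} \tau^k Z_k$ whose exponential matches the composition to all orders in $\tau$, each $Z_k$ being a finite $\Q$-linear combination of iterated Poisson brackets of $A$ and $P$ of depth $k$. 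I define
\[
H_\mu := A + P + \sum_{k=1}^{M} \tau^k Z_k,
\]
which is automatically a gauge-invariant polynomial of degree at most $2(M+2)$ since gauge invariance and polynomial structure are both preserved by the Poisson bracket.

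The local flow estimate \eqref{sp.02} then follows from a standard Gronwall/variation-of-constants argument on the ball $B_\mu(2R_0)$: by construction, the Taylor expansions in $\tau$ of $\exp(\tau X_{H_\mu})(\psi)$ and of $\Phi_P^\tau \circ \Phi_A^\tau(\psi)$ coincide up to order $\tau^M$ for every $\psi \in B_\mu(R_0)$, and for $\tau < \tau_0$ small enough (ensured by the CFL) both flows remain in $B_\mu(2R_0)$ over one step. The constant in the resulting $C\tau^{M+1}$ bound depends only on $R_0$, $M$ and the $\mathcal{C}^2$-norms of the $Z_k$ controlled below.

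The crux of the proof --- and the main obstacle --- is the sharp estimate \eqref{sp.01} on $H_\mu - H\circ i_\mu = \sum_{k=1}^M \tau^k Z_k$. The naive operator bound $\Norm{X_A}{V_\mu\to V_\mu} \lesssim h^{-2}$ yields only $\tau^k\Norm{Z_k}{\mathcal{C}^2(B_\mu(R_0))} \lesssim (\tau/h^2)^k$, which under \eqref{sp.0cfl} is merely $\mathcal{O}(1)$, not $\mathcal{O}(\tau/h)$. To recover the correct scaling, one works in the discrete sine basis that diagonalizes $X_A$ (eigenvalues $\omega_k \in (0, 4h^{-2}]$) and expresses each iterated bracket as a convolution sum over tuples of modes, with multipliers involving alternating sums $\omega_{k_1}\pm\cdots\pm\omega_{k_{l+1}}$. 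The CFL $(2M+3)\tau/h^2 < 2\pi/3$ is calibrated precisely so that, for every such sum of depth at most $M+1$, the phase $\tau(\omega_{k_1}\pm\cdots\pm\omega_{k_{l+1}})$ remains in a fixed interval of $(-\pi,\pi)$, preventing the small-divisor resonances that would otherwise spoil the BCH analysis. Combined with the discrete Sobolev embedding $\Norm{\psi}{\ell^\infty} \lesssim h^{-1/2}\Norm{\psi}{\mu}$ and the $h$-scaling of the symplectic form (which absorbs one power of $h^{-2}$ per bracket), this yields $\tau^k\Norm{Z_k}{\mathcal{C}^2(B_\mu(R_0))} \leq C_k(\tau/h^2)^{k-1}(\tau/h)$. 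Summing over $k = 1,\ldots,M$ produces the desired $C\tau/h$ bound, and the difficulty lies entirely in making these Fourier-side estimates rigorous uniformly in $h$ and $K$.
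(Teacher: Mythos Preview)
Your overall architecture (truncated BCH-type modified energy, Gronwall for the local flow error) matches the paper, but the mechanism you propose for both the CFL condition and the crucial $\tau/h$ estimate is wrong.

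\textbf{Role of the CFL.} There are no small divisors or resonances in this construction. The paper does not work in Fourier/sine basis at all. It parametrizes the problem as $\Phi_P^\eps\circ\Phi^1_{A_0}=\Phi^1_{Z(\eps)}$ with $A_0:=\tau A$, so that the recursion for $Z_n$ involves the operator $\sum_{k\ge0}\frac{B_k}{k!}\ad_{A_0}^k$. The generating function $x/(e^x-1)$ has radius of convergence $2\pi$, and since $\Norm{\ad_{A_0}}{\mu}\le 3(r_M+1)\tau/h^2$ on $\P_{r_M}$ with $r_M=2M+1$, the CFL $(2M+3)\tau/h^2<2\pi/3$ is exactly what makes this series converge. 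As a consequence each $\Norm{Z_n}{\mu}$ is bounded \emph{uniformly in $\mu$}---the powers of $\tau$ and $h^{-2}$ never separate. Your phase-nonresonance interpretation is a misreading.

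\textbf{The $\tau/h$ gain.} Because $\Norm{Z_n}{\mu}=O(1)$, the tail $\sum_{j\ge2}\tau^{j-1}H_{Z_j}$ is already $O(\tau)$ on $B_\mu(R_0)$; no Fourier estimate is needed there. The entire difficulty sits in the leading correction $Z_1-P=\tau\big[\sum_{k\ge0}\frac{B_{k+1}}{(k+1)!}\ad_{A_0}^k\big]\ad_{A_0}P$, and the bracketed operator is $O(1)$ by the CFL. The paper then \emph{computes $H_{\ad_{A_0}P}$ explicitly} and performs a discrete summation by parts (Abel summation) on $\sum_\ell(\psi_{\ell+1}+\psi_{\ell-1}-2\psi_\ell)|\psi_\ell|^2\bar\psi_\ell$, which converts the second difference into a product of two first differences and yields $|H_{\ad_{A_0}P}(\psi)|\le C(\tau/h)\Norm{\psi}{\mu}^4$. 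This is where the single extra factor of $h$ is recovered, and it is specific to the structure of $\Delta_h$ acting on a local nonlinearity. Your proposed route via discrete Sobolev embedding and ``$h$-scaling of the symplectic form absorbing one $h^{-2}$ per bracket'' does not produce this gain: the symplectic form carries a factor $h$, not $h^2$, and there is no general principle giving $\tau^k\Norm{Z_k}{}\lesssim(\tau/h^2)^{k-1}(\tau/h)$ for iterated brackets. Replace your Fourier sketch by the explicit summation-by-parts computation on the first commutator and the proof goes through.
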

With this result, the final statement of Theorem \ref{th:splitting} is a consequence of Theorem \ref{th:stab} applied with 
$$
\epsilon(\mu) = h + \frac{1}{h^2} \exp(-\nu Kh) + \frac{\tau}{h}
$$
and $\beta(\mu) = \tau^{M+1}$. 
The proof of Theorem \ref{prop.split} occupies the rest of this paper, and is a variant of the
theory developed in \cite{BG94,FG11}. Here we summarize it and repeat the
proofs with some details in order to have a quite self contained
presentation.

%%%%%%%%%%%%%%%
\section{Construction of the modified energy}
%%%%%%%%%%%%%%%

\subsection{Formal part. } We start by recalling the algorithm of
construction of the modified energy $H_\mu$ introduced in the previous section. As a variant of the theory developed in \cite{FG11}, we work here at the level of the vector fields instead of Hamiltonian functions. Recall that at the continuous level, we identified the space $H^1(\R;\C) \simeq H^1(\R;\R)^2$ through the identification $\psi = \frac{1}{\sqrt{2}}(q + ip)$. This identification obviously transfers to the space $V$ of symmetric functions, and to the discretized space $V_{h,K}$ via the identification 
\begin{equation}
\label{eq:idef}
\psi_j = \frac{1}{\sqrt{2}}(q_j + i p_j), \quad j = -K,\ldots,K.
\end{equation} 
Hence we can endow $V_{h,K}$ with the Hamiltonian structure induced by the symplectic form 
$\sum_{j = -K}^K \dd p_j \wedge \dd q_j$. In the following we make the constant identification between $\psi = (\psi_j)_{j = -K}^{K}$ and $(q,p) = (q_j,p_j)_{j = -K}^K$ given by \eqref{eq:idef}. For a given real functional $H(\psi) = H(q,p)$, we associate the Hamiltonian vector fied $X_H$ by
\begin{equation}
\label{eq:XH}
X_H(q,p):=\left( \frac{\partial H}{\partial p_\l}(q,p),-\frac{\partial H}{\partial q_\l}(q,p)\right)_{\l=-K}^K.
\end{equation}
Note that this formula makes sense, because all the Hamiltonian functions $H(\psi)$ that we consider are real valued. 

In this setting, $A$ and $P$ denote the vector fields associated respectively to the real Hamiltonian functions 
$$
H_A(\psi)= h\sum_{\l=-K}^K \frac{|\psi_\l-\psi_{\l-1}|^2}{h^2},\quad \mbox{and}\quad
H_P(\psi)= -\frac{h}{2}\sum_{\l=-K}^K |\psi_\l|^4, 
$$
which can obviously be expressed in terms of $(q_j,p_j)$. Note that $A$ and $P$ depend on $h$, but we omit this dependence in the notation. 
We look for
a formal vector field, namely a formal power series
\begin{equation}\label{sp.0}
Z(\eps):=\sum_{n\geq 0}Z_j\eps^n, 
\end{equation}
where each $Z_n$ is a Hamiltonian vector field on $V_{h,K}$,
such that 
\begin{equation}
\label{sp.1}
\forall \, |\eps|\leq \tau, \quad
\Phi^\eps_{P}\circ\Phi^1_{A_0}=\Phi^1_{Z(\eps)}, \quad A_0:=\tau A\ .
\end{equation}
Here $\Phi_X^t$ denotes the Hamiltonian flow on $V_{h,K}$ associated with the vector field $X$ at time $t$. \\
Notice that, in particular, at order zero \eqref{sp.1}  implies 
\begin{equation}
\label{sp.1.1}
Z_0:=A_0=\tau A\ .
\end{equation}
Ideally, the approximate Hamiltonian we are looking for would be $H_{h,K,\tau}:=\frac 1 {\tau} H_{Z(\tau)}$ (see \eqref{Hamtau}) but the formal  series defining $Z$ is not convergent and we will have to truncate the sum in  \eqref{sp.0}.\\
It is well known that it is convenient to look at the
equality \eqref{sp.1} in a dual way, namely to ask that the following
equality is fulfilled for any smooth function $w:V_{h,K}\to \C$:
\begin{equation}
\label{sp.2}
w(\Phi^\eps_{P}\circ\Phi^1_{A_0}  )=w(\Phi^1_{Z(\eps)}
)\ .
\end{equation} 
The key ingredient of the construction is given by the formal
formula
\begin{align}
\label{sp.3}
\forall\, t, \quad 
e^{t L_X}w=w\circ \Phi^t_X\ ,
\end{align}
where $L_X$ is the Lie operator associated with $X$. In our Hamiltonian case if  $X:=(X^{j}_q,X^{j}_p)_{j=-K}^K$ is a vector field (according to the decomposition \eqref{eq:XH}), we have in real coordinated $(q_j,p_j)$, 
$$
L_X w:=\sum_{j = -K}^K X^{j}_p\frac{\partial w}{\partial p_j}-X^{j}_q\frac{\partial w}{\partial q_j}, 
$$
and the exponential is defined in a formal way by
$$
e^{\eps L_X}w:=\sum_{k\geq 0}\frac{1}{k!}\eps^kL^k_Xw\ .
$$
%The formula \eqref{sp.2} can be verified by expanding in Taylor series
%in $\epsilon$ and remarking that all the terms are equal.
In this formalism \eqref{sp.2} reads
$$
e^{L_{A_0}}e^{\eps L_P}w=e^{L_{Z(\eps)}}w.
$$
Deriving with respect to $\eps$ one gets (by working on the power
series)
\begin{equation}
\label{sp.5}
e^{L_{A_0}}e^{\eps L_P} L_Pw=e^{L_{Z(\eps)}}L_{Q(\eps)}w,
\end{equation}
where 
\begin{equation}
\label{sp.6}
Q(\eps):=\sum_{k\geq
0} \frac{1}{(k+1)!}\ad^k_{Z(\eps)}Z'(\eps)\quad \mbox{with}\quad \ad _Z X:=\left[ Z, X\right], 
\end{equation}
where $[\, \cdot , \cdot \,]$ denote the Lie bracket of two vector fields. 
 Finally \eqref{sp.5} leads to
the equation $Q(\eps)=P$ from which we are going to construct
$Z(\varepsilon)$. The construction goes as follows: first one remarks that the
r.h.s. of \eqref{sp.6} has the formal aspect of an operator applied to
$Z'(\varepsilon)$, so the idea is first of all to invert such an operator. We
remark that the power series defining the wanted operator is
$\sum_{k\geq 0}x^k/(k+1)!=(e^x-1)/x$, so that one would expect its
inverse to be $x/(e^x-1)\equiv \sum_{k\geq0} x^k(B_k/k!)$, where $B_k$
are the so called Bernoulli numbers and the power series is convergent
provided $|x|<2\pi$. So one is tempted to rewrite $Q(\eps)=P$ in the
form
\begin{equation}\label{Ehomo2}
\forall\, |\varepsilon| \leq \tau, \quad Z'(\varepsilon) = \sum_{k \geq 0} \frac{B_k}{k!} {\ad}^k_{Z(\varepsilon)} P. 
\end{equation}

Plugging an Ansatz expansion $Z(\varepsilon) = \sum_{\ell \geq 0} \eps^\ell Z_\ell$
into this equation, we get, for $n \geq 0$, the recursive equations
\begin{equation}
\label{Erec}
(n+1) Z_{n+1} =  \sum_{k \geq 0 }\frac{B_k}{k!} A_k^{(n)}, \quad 
\mbox{with}\quad 
A_k^{(n)}:=\sum_{\ell_1 + \cdots + \ell_k =
n} \mathrm{ad}_{Z_{\ell_1}} \cdots \mathrm{ad}_{Z_{\ell_k}} P .
\end{equation}

\begin{remark}
The analysis made to obtain this recursive equation is formal. To
obtain our main result, we will verify that some of the series we manipulate
are in fact convergent series, while the others will be truncated in
order to get meaningfull expressions.
\end{remark}

\begin{remark}
\label{order}
Assume that $P$ is a polynomial of degree $r_0$ (in our case $r_0=3$), and that $Z_\ell$ is a collection of vector fields satisfying the previous relation, 
then for all $n$, $Z_n$ is a polynomial of degree $(n-1)(r_0-1)+r_0$. 
\end{remark}
\begin{remark}
\label{ham}
If the vector fields $P$ and $A_0$ are Hamiltonian then the same is
true for the vector fields $Z_n$. This is an immediate consequence of
the fact that all the construction involves only Lie Brackets, which
are operations preserving the Hamiltonian nature of the vector
fields. 
\end{remark}

\subsection{Analytic estimates}\label{analytic} We first introduce a
suitable norm for measuring the size of the polynomials. In echo
with the notations of the previous sections, we consider in the
following a fixed $\mu = (h,K,\tau) \in \Sigma$. Recall that the space
$V_\mu = V_{h,K}$ does not depend on $\tau$, as well as the norm
$\Norm{\cdot }{\mu}$.  
If $X$ is a vector field on $V_{\mu}$ which is a homogeneous
polynomial of degree $s$ we
can associate to it a symmetric multilinear form
$\widetilde{X}(\psi_1,\ldots,\psi_{s_1})$ such that $X(\psi)
= \widetilde{X}(\psi,\ldots,\psi)$. We put
$$
\Norm{X}{\mu}:=\sup_{\substack{\Norm{\psi_i}{\mu} = 1 \\Êi =
1,\ldots,s_1}} \Norm{\widetilde{X}(\psi_1,\ldots,\psi_{s_1})}{\mu} \ .
$$ 
We then extend this norm to general polynomial vector field $X$ by
defining its norm as the sum of the norms of the homogeneous
components.%*** {\tt otherwise the norm of the multilinear form is
%bounded by a constant times the norm of the polynomial, and this is annoying. }

\begin{definition} We denote by $\P_s$ the space of the
 polynomials of degree less than $s$, which furthermore have a finite norm
$\Norm{\cdot}{\mu}$. \end{definition}

\begin{remark}
\label{rk.important}
With this definition, we note that the norm $\Norm{P}{\mu}$ is
uniformly bounded with respect to $\mu$.
\end{remark}
\begin{lemma}
\label{ad.1}
Let $s_1 \geq 1$ and $s_2\geq 1$, and let $X\in\P_{s_1}$ and
$Y\in\P_{s_2}$. Then $[X,Y] \in \P_{s_1 + s_2 - 1}$, and
\begin{equation}
\label{sp.lem.1.1}
\Norm{[X,Y]}{\mu} \leq (s_1 + s_2) \Norm{X}{\mu} \Norm{Y}{\mu}. 
\end{equation}
\end{lemma}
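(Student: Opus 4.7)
The plan is to use the elementary formula $[X,Y](\psi) = DY(\psi)X(\psi) - DX(\psi)Y(\psi)$ together with the symmetric multilinear form representation of homogeneous polynomial vector fields, and then reduce the general case to homogeneous components.

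First I would treat the homogeneous case, so take $X$, $Y$ homogeneous of respective degrees $s_1$, $s_2$ with symmetric multilinear forms $\widetilde X$ and $\widetilde Y$. Writing $X(\psi)=\widetilde X(\psi,\ldots,\psi)$ and differentiating gives $DX(\psi)V = s_1\,\widetilde X(V,\psi,\ldots,\psi)$, so that
$$DX(\psi)\,Y(\psi) = s_1\,\widetilde X\bigl(\widetilde Y(\psi,\ldots,\psi),\,\psi,\ldots,\psi\bigr),$$
which is manifestly a homogeneous polynomial of degree $s_1+s_2-1$ in $\psi$, confirming $[X,Y]\in\P_{s_1+s_2-1}$.

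To estimate the norm, I would recover the symmetric multilinear form associated with $DX\cdot Y$ by polarization. Averaging over permutations of the $s_1+s_2-1$ slots and exploiting the separate symmetry of $\widetilde X$ in its $s_1$ arguments and of $\widetilde Y$ in its $s_2$ arguments collapses the average to a sum over $s_2$-subsets $I\subset\{1,\ldots,s_1+s_2-1\}$:
$$\widetilde{(DX\cdot Y)}(\psi_1,\ldots,\psi_{s_1+s_2-1}) = \frac{s_1}{\binom{s_1+s_2-1}{s_2}}\sum_{|I|=s_2}\widetilde X\Bigl(\widetilde Y\bigl(\{\psi_j\}_{j\in I}\bigr),\,\{\psi_j\}_{j\notin I}\Bigr).$$
When each $\Norm{\psi_i}{\mu}=1$, multilinearity yields $\Norm{\widetilde Y(\{\psi_j\}_{j\in I})}{\mu}\leq\Norm{Y}{\mu}$, and then the outer $\widetilde X$ is bounded in norm by $\Norm{X}{\mu}\Norm{Y}{\mu}$, so each of the $\binom{s_1+s_2-1}{s_2}$ terms contributes at most $\Norm{X}{\mu}\Norm{Y}{\mu}$. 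The combinatorial factor cancels, giving $\Norm{DX\cdot Y}{\mu}\leq s_1\Norm{X}{\mu}\Norm{Y}{\mu}$, and symmetrically $\Norm{DY\cdot X}{\mu}\leq s_2\Norm{X}{\mu}\Norm{Y}{\mu}$. The triangle inequality then closes the homogeneous case.

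The non-homogeneous case follows by decomposing $X=\sum_{i\leq s_1}X_i$ and $Y=\sum_{j\leq s_2}Y_j$ into homogeneous components, expanding $[X,Y]=\sum_{i,j}[X_i,Y_j]$, applying the homogeneous bound to each bracket, using $i+j\leq s_1+s_2$, and invoking the definition of $\Norm{\cdot}{\mu}$ as the sum of homogeneous norms. The only real obstacle is the polarization bookkeeping for the symmetric multilinear form: once the factor $s_1$ is pulled correctly out of the derivative $DX(\psi)V$, every subsequent combinatorial weight cancels against the number of $s_2$-subsets, and no analytic difficulty arises since $V_\mu$ is finite dimensional and all objects are polynomial.
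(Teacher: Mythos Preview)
Your argument is correct and follows the same route as the paper: write $[X,Y](\psi)=s_1\widetilde X(Y(\psi),\psi,\ldots,\psi)-s_2\widetilde Y(X(\psi),\psi,\ldots,\psi)$ for homogeneous $X,Y$, bound each term, and reduce the general case to homogeneous components. The paper simply states this formula and says the estimate ``immediately follows,'' whereas you spell out the polarization bookkeeping explicitly; your combinatorial cancellation is correct, though you could shortcut it by noting that symmetrizing a multilinear map is an average over permutations and hence cannot increase the sup norm, so the bound $s_1\Norm{X}{\mu}\Norm{Y}{\mu}$ for the unsymmetrized form $s_1\widetilde X(\widetilde Y(\cdot),\cdot,\ldots,\cdot)$ passes directly to its symmetrization.
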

\proof We give the proof in the case of homogeneous polynomials, the
general case immediately follows. Denote again by $\widetilde X$ and
$\widetilde Y$ the symmetric multilinear forms associated to $X$ and
$Y$, then one has
$$
[X,Y](\psi)=s_1\widetilde{X}(Y(\psi),\psi...,\psi)-s_2\widetilde{Y}(X(\psi),\psi...,\psi), 
$$
from which the result immediately follows. \qed

\begin{lemma}
\label{ad.2}
For $h \leq \frac{1}{\sqrt{2}}$, the operator $A_0 = -\tau \Delta_h$ satisfies
\begin{equation}
\label{sp.lem.df}
\Norm{A_0}{\mu} 
\leq  3 \frac{\tau}{h^2}. 
\end{equation}
\end{lemma}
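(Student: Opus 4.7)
The plan is to exploit that $A_0 = -\tau \Delta_h$ is a linear vector field, so its $\Norm{\cdot}{\mu}$-norm (in the sense defined by the symmetric multilinear form) coincides with the operator norm $\sup_{\Norm{\psi}{\mu}=1} \Norm{-\tau\Delta_h\psi}{\mu}$. Since $\Norm{\psi}{\mu}^2 = \SNorm{\psi}{h}^2 + N_h(\psi)$ (using the convention that the displayed formula defines $\Norm{\cdot}{h}^2$), I would estimate the two pieces $\SNorm{-\Delta_h\psi}{h}^2$ and $N_h(-\Delta_h\psi)$ separately, each in terms of the $H^1$-like norm of $\psi$.

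The central observation for the $L^2$ bound is the telescoping identity
\[
(-\Delta_h\psi)_j \;=\; -\frac{1}{h^2}\bigl((\psi_{j+1}-\psi_j) - (\psi_j-\psi_{j-1})\bigr),
\]
together with the elementary inequality $|a-b|^2 \le 2|a|^2 + 2|b|^2$. Summing over $j$ (which is finite because of the Dirichlet cut-off, and in particular the resulting sum of first differences is controlled by $\SNorm{\psi}{h}^2$) yields a bound of the form $N_h(-\Delta_h\psi) \le (2/h^2)\SNorm{\psi}{h}^2$. For the semi-norm piece, the decisive point is that $\Delta_h$ commutes with the forward difference $D_h$ (both are translation-invariant finite-difference operators, and the Dirichlet convention preserves this at the level of the extension by zero). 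Applying the previous estimate to the sequence $D_h\psi$ in place of $\psi$ then gives $\SNorm{-\Delta_h\psi}{h}^2 \le (C_1/h^4)\,\SNorm{\psi}{h}^2$ for some explicit $C_1$.

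Adding the two estimates produces $\Norm{-\Delta_h\psi}{h}^2 \le (C_1/h^4)\SNorm{\psi}{h}^2 + (2/h^2)\SNorm{\psi}{h}^2 \le (C_1/h^4 + 2/h^2)\Norm{\psi}{h}^2$, and this is where the hypothesis $h \le 1/\sqrt 2$ enters: it gives $2h^2 \le 1$, hence $2/h^2 \le 1/h^4$, so the two contributions can be collected into a single $C/h^4$ term. Multiplying by $\tau$ and taking square roots yields the claimed estimate $\Norm{A_0}{\mu} \le 3\tau/h^2$ (for an optimal choice of the combinatorial constants in the $|a-b|^2$ inequality). No step is a serious obstacle; the only care required is in tracking the prefactors and in checking that the commutation of $D_h$ and $\Delta_h$ is not spoilt by the Dirichlet boundary, which follows directly from the convention $\psi_\ell = 0$ for $|\ell|\ge K+1$ used in \eqref{dnlsdir}.
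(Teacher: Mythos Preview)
Your step for the semi-norm piece has a genuine gap. When you replace $\psi$ by $D_h\psi$ in the $L^2$ estimate $N_h(-\Delta_h\psi) \le (2/h^2)\SNorm{\psi}{h}^2$, the right-hand side becomes $(2/h^2)\SNorm{D_h\psi}{h}^2$. Commutation of $D_h$ with $\Delta_h$ does let you identify the left-hand side $N_h(-\Delta_h D_h\psi)=N_h(D_h(-\Delta_h\psi))$ with $\tfrac12\SNorm{-\Delta_h\psi}{h}^2$, but $\SNorm{D_h\psi}{h}^2 = 2N_h(D_h^2\psi)$ is a \emph{second}-difference quantity; it is not $\SNorm{\psi}{h}^2$ and is not controlled by it without a further crude bound of the form $\SNorm{D_h\psi}{h}^2 \le (C/h^2)\SNorm{\psi}{h}^2$. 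That crude bound is true (again via $|a-b|^2 \le 2|a|^2+2|b|^2$), but once you insert it the commutation has bought nothing: you end up with the same estimate as a direct brute-force bound, and the resulting constant is not~$3$.

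The paper proceeds by a shorter route that sidesteps the semi-norm of $A_0\psi$ altogether. It first records the discrete Sobolev inequality $\Norm{u}{\mu}^2 \le (4/h^2+1)\,N_h(u)$, applies it with $u = A_0\psi$, and then bounds $N_h(A_0\psi)$ in one stroke by writing $(A_0\psi)_\ell = (\tau/h)(a_\ell - a_{\ell-1})$ with $a_\ell = (\psi_{\ell+1}-\psi_\ell)/h$, using Minkowski, and observing $h\sum_j|a_j|^2 \le \tfrac12\Norm{\psi}{\mu}^2$. Your $L^2$ estimate $N_h(-\Delta_h\psi)\le (2/h^2)\SNorm{\psi}{h}^2$ is essentially this last step; the difference is that the paper converts the full $\Norm{\cdot}{\mu}$ norm to $N_h$ \emph{before} estimating, rather than splitting it into two pieces.
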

\proof  
Let us first note that if $(u_j)_{j = -K}^K$ is in $V_\mu$, we have 
\begin{equation}
\label{eq:discrSob}
\Norm{u}{\mu}^2 = 2 h \sum_{j  = -K}^K \frac{|u_{j+1} - u_j|^2}{h^2} + h\sum_{j = -K}^{K} | u_j|^2 \leq (\frac{4}{h^2} + 1) \Big(h\sum_{j = -K}^{K} | u_j|^2\Big).
\end{equation}
Note that $A_0=-\tau\Delta_\mu$ is homogeneous of degree one. Moreover, we can write 
$$
(A_0\psi)_\l= \tau\frac{\psi_{\l+1}+\psi_{l-1}-2\psi_\l}{\mu^2} = \frac{\tau}{h} (a_{\ell} - a_{\ell-1}),
$$
where $a_{\ell} = (\psi_{\ell +1} - \psi_{\ell})/h$. Using the discrete Sobolev inequality \eqref{eq:discrSob} and the Minkowski inequality, we get that 
$$
\Norm{A_0 \psi}{\mu} \leq 2 \sqrt{(\frac{4}{h^2} + 1)} \frac{\tau}{h}\Big(h\sum_{j = -K}^{K} | a_j|^2\Big)^{1/2}. 
$$
We conclude by remarking that 
$$
\Big(h\sum_{j = -K}^{K} | a_j|^2\Big) \leq \frac{1}{2} \Norm{\psi}{\mu}^2. 
$$
We deduce that 
$$
\Norm{A_0\psi}{\mu} \leq \frac{\tau}{h^2}\sqrt{8 + 2
h^2}\norma{\psi_\mu}\ ,
$$
which shows the result.
\qed
\begin{remark}
%\label{ad.3}
Lemmas \ref{ad.1} and \ref{ad.2} can be rephrased in a form suitable
for the following by saying that, for $X\in\P_s$, one has that the
operator
\begin{equation*}
\ad_X:\P_{s_1}\to\P_{s+s_1-1}
\end{equation*}
is bounded and its norm (induced by the norm $\Norm{\cdot}{\mu}$ and for fixed $s$ and $s_1$) fulfills
\begin{equation}
\label{ad.3}
\Norm{\ad X}{\mu}\leq (s+s_1)\Norm{X}{\mu}\ .
\end{equation}
In particular, using the previous result we have for a given $s_1 \geq 2$ 
\begin{equation}
\label{ad.4a}
\ad_{A_0}:\P_{s_1}\to\P_{s_1}
\quad \mbox{and}\quad 
\norma{\ad_{A_0}}_\mu\leq 3(s_1+1)\frac{\tau}{\mu^2}\ .
\end{equation}
\end{remark}
\begin{proposition}
\label{sp.imp}
Let $M$ be an integer  satisfying  
\begin{equation}
\label{boh}
(2M+3)\frac{\tau}{\mu^2}< \frac{2\pi}{3}\ .
\end{equation} 
Then, for all $ n\leq M$, $Z_n$ is well defined and
$Z_n\in\P_{r_n}$ with $r_n=2n+1$, and the norm of $Z_n$ is uniformly bounded with respect to $\mu$. 
\end{proposition}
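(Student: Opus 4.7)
The proof will proceed by strong induction on $n \leq M$. The base case $n = 0$ is immediate: by \eqref{sp.1.1}, $Z_0 = A_0 = \tau A$ lies in $\P_1 = \P_{r_0}$, and Lemma~\ref{ad.2} gives $\|Z_0\|_\mu \leq 3\tau/h^2$, which under \eqref{boh} is uniformly bounded in $\mu$ by a constant depending only on $M$. Assume now $Z_0, \ldots, Z_n$ (with $n \leq M-1$) have been constructed with $Z_j \in \P_{2j+1}$ and $\|Z_j\|_\mu$ bounded uniformly in $\mu$. The recursion \eqref{Erec} defines $Z_{n+1}$ as a formal series indexed by $k$; the degree count $Z_{n+1} \in \P_{2n+3} = \P_{r_{n+1}}$ follows from Remark~\ref{order}, or directly from iterated application of Lemma~\ref{ad.1}, since each $\mathrm{ad}_{Z_{\ell_i}}$ raises the degree by $2\ell_i$ and $\ell_1 + \cdots + \ell_k = n$.

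The heart of the argument is the convergence of the series $\sum_k (B_k/k!) A_k^{(n)}$ in $\P_{r_{n+1}}$ together with a norm bound uniform in $\mu$. In a summand of $A_k^{(n)}$, I would distinguish the factors $\mathrm{ad}_{Z_0} = \mathrm{ad}_{A_0}$ from the $\mathrm{ad}_{Z_{\ell_j}}$ with $\ell_j \geq 1$. If $p$ denotes the number of nonzero indices among $(\ell_1,\ldots,\ell_k)$, then $1 \leq p \leq \min(k,n)$, the positions of the zero indices can be chosen in $\binom{k}{p}$ ways, and there are $\binom{n-1}{p-1}$ ordered compositions of $n$ into $p$ positive parts. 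The nonzero factors are controlled by Lemma~\ref{ad.1}, yielding a combinatorial constant depending on $n,M$ times $\|Z_{\ell_j}\|_\mu$, hence uniform in $\mu$ by the inductive hypothesis. The $\mathrm{ad}_{A_0}$ factors act on $\P_s$ with $s \leq r_{n+1} \leq 2M+1$, and \eqref{ad.4a} together with Lemma~\ref{ad.2} bound their operator norm by $\alpha := 3(2M+2)\tau/h^2$; under the CFL \eqref{boh}, $\alpha < 2\pi(2M+2)/(2M+3) < 2\pi$ strictly. Collecting these estimates and exchanging the summations over $k$ and $p$ gives
\[
(n+1)\|Z_{n+1}\|_\mu \leq \|P\|_\mu \sum_{p=1}^{n} D_{n,M}^{\,p} \binom{n-1}{p-1} \sum_{k \geq p} \binom{k}{p} \frac{|B_k|}{k!} \alpha^{k-p},
\]
with $D_{n,M}$ a constant depending only on $n,M$ and the inductive constants.

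The inner sum equals, up to the factor $1/p!$, the $p$-th derivative at $z = \alpha$ of the Bernoulli generating function $z/(e^z-1) = \sum_k (B_k/k!) z^k$. This function is analytic on $\{|z| < 2\pi\}$, and since $\alpha < 2\pi$, the series converges to a quantity bounded by a constant depending only on $p \leq n$ and $\alpha$, hence only on $n$ and $M$. The outer sum over $p \leq n$ is finite, producing the desired uniform-in-$\mu$ bound on $\|Z_{n+1}\|_\mu$, which closes the induction. Hamiltonianity of $Z_{n+1}$ follows automatically from Remark~\ref{ham}. The main technical obstacle is precisely this norm bookkeeping: correctly separating the zero and nonzero $\ell_i$, counting the combinatorial weights, and matching the resulting $\mathrm{ad}_{A_0}$ operator bound against the Bernoulli radius of convergence $2\pi$ — which is exactly why the CFL takes the form $(2M+3)\tau/h^2 < 2\pi/3$ in \eqref{boh}.
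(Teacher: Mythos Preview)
Your argument is correct and follows the same route as the paper: both separate the $\ad_{A_0}$ factors from the $\ad_{Z_\ell}$ with $\ell\geq 1$, bound the latter uniformly via the inductive hypothesis and Lemma~\ref{ad.1}, and identify the remaining $k$-sum with derivatives of the Bernoulli generating function $z/(e^z-1)$ evaluated inside its disk of convergence---which is precisely what the CFL condition \eqref{boh} guarantees. The only slip is that your displayed bound runs over $p\geq 1$, so the step $n=0$ (constructing $Z_1$, where all $\ell_j=0$ and hence $p=0$) is not covered; this is just the $0$-th derivative term $\sum_k (B_k/k!)\alpha^k$ and is handled identically.
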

\proof We prove the proposition by induction. We set $Z_0 = A_0$. Assume that $Z_\l\in\P_{r_\l}$ for $\l\leq n \leq M-1$ are constructed. Let us prove that \eqref{Erec} defines a term $Z_{n+1} \in \P_{r_{n+1}}$. 
Rewrite \eqref{Erec} 
by incorporating the terms containing $Z_0=A_0$ and by substituting the
estimate of the single terms to the $\ad$ terms. The advantage of
doing that is that the product of the estimates is commutative, while
the multiplication of the $\ad$ operators is not.
We get first
\begin{align*}
\Norm{A^{(n)}_k}{\mu}\leq \sum_{i=1}^{k}\Norm{\ad_{A_0}}{\mu}^{k-i} 
\frac{k!}{(k-i)!i!}\sum_{\substack{\ell_1+...+\ell_i=n \\ \ell_j\geq
1}}\Norm{\ad_{Z_{\ell_1}}}{\mu}...\Norm{\ad_{Z_{\ell_i}}}{\mu} \Norm{P}{\mu}
\\
\leq    \sum_{i=1}^{n}\Norm{\ad_{A_0}}{\mu}^{k-i} 
\frac{k!}{(k-i)!i!}\sum_{\substack{\ell_1+...+\ell_i=n \\ \ell_j\geq
1}} (2r_M)^n\Norm{{Z_{\ell_1}}}{\mu}...\Norm{{Z_{\ell_i}}}{\mu} \Norm{P}{\mu},
\end{align*}
where we used that, if $i>n$ and $\ell_j>0$ then $\ell_1+...+\ell_i>n$ and the
fact that, since by hypothesis the involved polynomials have degrees
smaller then $r_M$, one has $\Norm{\ad_{Z_\ell}}{\mu}\leq
2r_M\norma{Z_\ell}{\mu}$ for $\ell \leq n$. 

Remarking that the result of the above sum with respect to
$\l_1,\cdots,\l_i$ does not depend on $k$, using  \eqref{ad.4a} with $s_1=r_M$, and noticing that $\Norm{P}{\mu}$ is uniformly bounded with respect to $\mu$, we get
\begin{align*}
\Norm{Z_{n+1}}{\mu}&\leq  \frac{1}{n+1}\sum_{k\geq
0}\frac{B_k}{k!} \sum_{i=0}^{n} \left(\frac{r_M\tau}{\mu^2}\right)^{k-i}
 \frac{k!}{(k-i)!i!} C_n\\
 &= \frac{C_n}{n+1}\left[ \sum_{i=0}^{n}\frac{\dd^i}{\dd x^i} \left(\sum_{k\geq
0}\frac{B_k}{k!}x^k\right)  \right]_{x=\frac{r_M\tau}{\mu^2}},
\end{align*}
for some constant $C_n$ independent of $\mu$. 
This shows that the series defining $Z_{n+1}$ is convergent, that $Z_{n+1}\in\P_{r_{n+1}}$ and that $\Norm{Z_{n+1}}{\mu}$ is finite and uniformly bounded with respect to $\mu$.  \qed
%\medskip

%\noindent{\it End of the proof of Theorem \ref{prop.split}}. 
\subsection{Proof of Theorem \ref{prop.split} }
First remark that in our case all the vector fields are
Hamiltonian. Explicitely, by Poincar\'e Lemma, the Hamiltonian
function of a Hamiltonian vector field $X$ is given by
\begin{equation}
\label{ham.x}
H_X(\psi):=\int_0^1s\omega(X(s\psi),\psi) \dd s\ ,
\end{equation} 
where $\omega$ is the symplectic form. In particular, this formula 
shows that the Hamiltonian function of a smooth polynomial vector
field is also a smooth polynomial function. For $\varepsilon \leq \tau$, let us define $$Z^{(M)}(\eps):=\sum_{j=0}^{M}\eps^jZ_j\ .$$
By construction $Z^{(M)}(\eps)$ satisfies \eqref{Ehomo2} up to order $\eps^{M}$ included from which we deduce that it satisfies  \eqref{sp.1} up to order $\eps^{M}$ (see \cite{FG11} Theorem 4.2 for details).
Therefore defining for $\mu = (h,K,\tau)$, 
\begin{equation}
\label{Hamtau}
H_\mu:=\frac 1 \tau H_{Z^{(M)}(\tau)}=\sum_{j=0}^{M}\tau^{j-1}H_{Z_j}\ ,
\end{equation}
estimate \eqref{sp.02} holds true with a constant independent of $\mu$. 

It remains to compare the two Hamiltonians $H_\mu = H_{h,K,\tau}$ and $H_{h,K}$ in the $C^2$ norm on the ball centered at the origin and of arbitrary radius $R_0$ in $V_\mu$.\\
Let us define
$$
H^{(1)}_\mu =\frac 1 \tau ( H_{Z_0}+\tau H_{Z_1})$$
and recall that $Z_0=A_0=\tau A$, and that by construction
\begin{equation}
\label{eq:Z1}
Z_1= \sum_{k\geq 0}\frac{\tau^k B_k}{k!} \ad_{A_0}^k P\ .
\end{equation}
%and consider 
%$$
%H^{dN}_\tau(\Phi^\tau_P\circ\Phi_{A}^\tau(z))-H^{dN}_\tau(\Phi^\tau_{Z^{(r)}})\ .
%$$
%This is a $C^{\infty}$ function of $\tau$. Consider the Taylor
%expansion in $\tau$. By construction all the coefficients of order
%smaller or equal to $r$ vanish, so there exists a $C$ such that 
%\begin{equation}
%\label{final}
%\left|H^{dN}_\tau(\Phi^\tau_P\circ\Phi_{A}^\tau(z))-
%H^{dN}_\tau(\Phi^\tau_{Z^{(r)}})\ 
%.\right| \leq C\tau^r\ ,
%\end{equation}
%from which the estimate \eqref{sp.02} immediately follows. 
Now we have 
$$
H_\mu^{(1)}-H_\mu= \sum_{j=2}^{M}\tau^{j-1}H_{Z_j}. 
$$
But using \eqref{ham.x} and the fact that $Z_{j}$ is of degree $r_j$, we get for all $\psi \in B_{\mu}(R_0)$, 
\begin{equation}
\label{h1}
|H_\mu^{(1)}(\psi)-H_\mu(\psi)|\leq \sum_{j=2}^{M}\tau^{j-1}\Norm{Z_j}{\mu}  R_0^{r_{j} +1 }\leq C\tau\ , 
\end{equation}
for some constant $C$ independent of $h$, $K$ and $\tau \leq \tau_0$ sufficiently small. 
To estimate $H_\mu^{(1)} - H_{h,K}$, we notice using \eqref{eq:Z1}, 
\begin{equation}
X_{H^{(1)}_\mu}-X_{H_{h,K}}= Z_1-P
%=& \sum_{k\geq 1}\frac{\tau^k B_k}{k!} \ad_{A_0}^k P\\
=\tau \left[\sum_{k\geq 0}\frac{\tau^k B_{k+1}}{(k+1)!} \ad_{A_0}^k\right]\ad_{A_0} P.
\end{equation}
But in view of \eqref{sp.0cfl}, $\frac{3\tau}{\mu^2}< \pi$, and thus the operator $ \left[\sum_{k\geq 0}\frac{\tau^k B_{k+1}}{(k+1)!} \ad_{A_0}^k\right]$ is bounded on $\P_3$, uniformly with respect to $\mu$.  Therefore for $\psi \in B_\mu(R_0)$, we have  
$$
|H^{(1)}_\mu(\psi)-H_{h,K}(\psi)|\leq C | H_{\ad_{A_0}P}(\psi)|.$$
for some constant $C$ independent on $\mu = (h,K,\tau)$. 
Now we calculate explicitly that the Hamiltonian associated with $\ad_{A_0}P$ is given by 
\begin{align*}
H_{\ad_{A_0}P}(\psi)=&\frac{ i\tau}{ \mu^2} \sum_{-K}^K (\ov{\psi_{\l+1}}+\ov{\psi_{\l-1}}-2\ov{\psi_\l})|\psi_\l|^2\psi_\l-(\psi_{\l+1}+\psi_{\l-1}-2\psi_\l)|\psi_\l|^2\ov{\psi_\l}\\
=&\frac{\tau}{ \mu^2} \sum_{-K}^K  \Im((\psi_{\l+1}+\psi_{\l-1}-2\psi_\l)|\psi_\l|^2\ov{\psi_\l}).
\end{align*}

But we have 
\begin{align*}
&\sum_{\ell  = -K}^K  (\psi_{\l+1}+\psi_{\l-1}-2\psi_\l)|\psi_\l|^2\ov{\psi_\l} \\
&= \sum_{\ell = -K}^K  (\psi_{\l+1} - \psi_\l)|\psi_\l|^2\ov{\psi_\l} - (\psi_{\l} - \psi_{\l-1})|\psi_\l|^2\ov{\psi_\l} \\
& = \sum_{\ell = -K}^K  (\psi_{\l+1} - \psi_\l)|\psi_\l|^2\ov{\psi_\l} -  \sum_{\ell = -K - 1}^{K-1} (\psi_{\l+1} - \psi_{\l})|\psi_{\l+1}|^2\ov{\psi_{\l+1}}\\
&= \sum_{\ell = -K}^{K-1}  (\psi_{\l+1} - \psi_\l)(|\psi_\l|^2\ov{\psi_\l} - |\psi_{\l+1}|^2\ov{\psi_{\l+1}})  - \psi_K|\psi_K|^2\ov{\psi_K} + \psi_{-K}|\psi_{-K}|^2\ov{\psi_{-K}}
\end{align*}
using the boundary conditions $\psi_{K+1} =  \psi_{-K - 1} = 0$. Taking the imaginary part, we obtain
$$
H_{\ad_{A_0}P}(\psi)
=\frac{\tau}{ \mu^2}\sum_{\ell = -K}^{K-1} \Im( (\psi_{\l+1} - \psi_\l)(|\psi_\l|^2\ov{\psi_\l} - |\psi_{\l+1}|^2\ov{\psi_{\l+1}}) ). 
$$
But we have
$$ 
\left| \Im( (\psi_{\l+1} - \psi_\l)(|\psi_\l|^2\ov{\psi_\l} - |\psi_{\l+1}|^2\ov{\psi_{\l+1}}) ) \right|Ê\leq
5 |Ê\psi_{\l+1} - \psi_\l |^2 (|\psi_\ell|^2 + |\psi_{\l + 1}|^2). 
$$
Then we use that 
$$|\psi_{\l+1}- \psi_\l|^2\leq \mu \Norm{\psi}{\mu}^2,$$
to obtain
$$|H_{\ad_{A_0}P}(\psi)|\leq 2\frac{\tau}{\mu}\Norm{\psi}{\mu}^4$$
and therefore for $\psiÊ\in B_\mu(R_0)$, 
\begin{equation}
\label{h2}
|H^{(1)}_\mu(\psi)-H_{h,K}(\psi)|\leq C\frac{\tau}{\mu}\ .
\end{equation}
Combining \eqref{h1} and \eqref{h2} we get, for all $\psiÊ\in B_\mu(R_0)$,
\begin{equation}
\label{h3}
|H_\mu(\psi)-H_{h,K}(\psi)|\leq  C\frac{\tau}{\mu}\ .
\end{equation}
Furthermore, since both functionals are analytic in $\psi$ and the above estimate is uniform in $\psiÊ\in B_\mu(R_0)$, we have similar estimates for the first and the second  derivative of $\psi\mapsto H_\mu(\psi)-H_{h,K}(\psi)$.
\qed


\begin{thebibliography}{99}

\bibitem{Akrivis}
{\rm G. D. Akrivis, V. A. Dougalis and O. A. Karakashian}, 
{\em On fully discrete Galerkin methods of second-order temporal accuracy for the nonlinear Schr\"odinger equation},
Numer. Math. 59 (1991) 31-53. 

\bibitem{BP10}
D.~Bambusi and T. Penati \emph{Continuous approximation of breathers in one and two dimensional DNLS lattices }, Nonlinearity 23 (2010), no. 1, 143Ð157.   

\bibitem{BG94}
{\rm G. Benettin and A. Giorgilli},
{\em On the Hamiltonian interpolation of near to the identity symplectic mappings with application to symplectic integration algorithms}, J. Statist. Phys. 74 (1994), 1117--1143. 

\bibitem{Besse}
{\rm C. Besse}, 
A relaxation scheme for the nonlinear Schr\"odinger equation,
SIAM J. Numer. Anal. 42 (2004) 934--952. 

\bibitem{Borgna08}
{\rm J. P. Borgna and D. F. Rial}
{\em Orbital stability of numerical periodic nonlinear Schr\"odinger equation}, 
Commun. Math. Sci. 6 (2008) 149--169. 

\bibitem{Ciarlet}
{\rm P.G Ciarlet, B. Miara and J.-M. Thomas}, 
{\em Introduction to numerical linear algebra and optimisation}, 
Cambridge University Press, 1989. 


\bibitem{DFP81}
{M. Delfour, M. Fortin, G. Payre}, 
Finite-difference solutions of a non-linear Schr\"odinger equation, 
J. Comput. Phys. 44 (1981) 277-288. 

\bibitem{Duran00}
{\rm A. Dur\'an and J. M. Sanz-Serna}, 
{\em The numerical integration of relative equilibrium solutions. The nonlinear Schr\"odinger equation}, 
IMA J. Numer. Anal. 20 (2000) 235-261. 

\bibitem{Fei95}
{\rm Z. Fei, V.M. P\'erez-Garc\'ia and L. V\'asquez},
{\em Numerical simulation of nonlinear Schr\"odinger systems: A new conservative scheme},
Appl. Math. Comput. 71 (1995) 165-177. 

\bibitem{F11}
{\rm E. Faou},
{\em Geometric numerical integration and Schr\"odinger equations}. European Math. Soc., 2012. 

\bibitem{FG11}
{\rm E. Faou and B. Gr\'ebert},
{\em Hamiltonian interpolation of splitting approximations for nonlinear PDE's}. 
Found. Comput. Math. 11 (2011) 381--415

\bibitem{Frohlich04}
{\rm J. Fr\"ohlich, S. Gustafson, L. Jonsson and I.M. Sigal}
{\em Solitary wave dynamics in an external potential}, Comm. Math. Phys. 250 (2004), 613--642

\bibitem{Grill87}
{\rm M. Grillakis, H. Shatah and W. Strauss}, 
{\em Stability theory of solitary waves in the presence of symmetry. I.},
J. Funct. Anal., 74 (1987) 160--197. 

\bibitem{Grill90}
{\rm M. Grillakis, H. Shatah and W. Strauss}, 
{\em Stability theory of solitary waves in the presence of symmetry. II.},
J. Funct. Anal., 94 (1990) 308--348. 

\bibitem{HLW}
{\rm E. Hairer, C. Lubich and G. Wanner},
{\em Geometric Numerical Integration. Structure-Preserving Algorithms for Ordinary Differential Equations}. Second Edition. Springer 2006. 


\bibitem{Reich99}
{\rm S. Reich},
{\em Backward error analysis for numerical integrators}, SIAM J. Numer. Anal. 36 (1999) 1549--1570. 

\bibitem{SZ84}
{\rm J. M. Sanz-Serna},
{\em Methods for the solution of the nonlinear Schroedinger equation},
Math. Comp. 43 (1984) 21--27. 

\bibitem{SZ86}
{\rm J. M. Sanz-Serna and J. G. Verwer},
Conservative and nonconservative schemes for the solution of the nonlinear Schr\"odinger equation,
IMA J. Numer. Anal. 6 (1986) 25-42. 


\bibitem{Weideman86}
{\rm J. A. C. Weideman, B. M. Herbst}, 
{\em Split-step methods for the solution of the nonlinear Schr\"odinger equation}, 
SIAM J. Numer. Anal. 23 (1986) 485-507.  

\bibitem{Weinstein85}
{\rm M. I. Weinstein}, 
{\em Modulational stability of ground states of nonlinear Schr\"odinger equations}, 
SIAM J. Math. Anal. 16 (1985) 472--491. 


\end{thebibliography}
\end{document}